\documentclass[10pt,reqno,a4]{amsart}

\unitlength 1cm
\textwidth 16cm
\textheight 20.5cm
\topmargin 0.0cm
\oddsidemargin 0cm
\evensidemargin 0cm
\parskip 0.2cm
\usepackage[latin1]{inputenc}
\usepackage{amssymb,amsmath,amscd,amsfonts,color, amsthm}
\title[CLT for certain statistics of Gram random matrices]{A CLT for Information-theoretic statistics
of Gram random matrices with a given variance profile}

\author[Hachem et al.]{W. Hachem, P. Loubaton and J. Najim}
\date{\today}

\newtheorem{theo}{Theorem}[section]

\newtheorem{lemma}[theo]{Lemma}
\newtheorem{coro}[theo]{Corollary}

\newtheorem{prop}[theo]{Proposition}
\newtheorem{assump}{Assumption A-\hspace{-0.15cm}}
\setlength{\textwidth}{400pt}

\newcommand{\R}{\mathbb{R}}
\newcommand{\C}{\mathbb{C}}
\newcommand{\E}{\mathbb{E}}
\newcommand{\bdm}{\begin{displaymath}}
\newcommand{\edm}{\end{displaymath}}
\newcommand{\bea}{\begin{eqnarray*}}
\newcommand{\eea}{\end{eqnarray*}}

\newcommand{\Cplus}{\mathbb{C}^+}
\newcommand{\Rplus}{\mathbb{R}^+}

\newcommand{\im}{\mathrm{Im}}
\newcommand{\bs}{\boldsymbol}
\newcommand{\ti}{\tilde}

\newcommand{\diag}{\mathrm{diag}}
\newcommand{\ii}{\mathbf{i}}
\numberwithin{equation}{section}
\theoremstyle{remark}
\newtheorem{rem}{Remark}[section]
\newcommand{\bv}{\boldsymbol{\varepsilon}}
\newcommand{\tr}{\mathrm{Tr}}
\newcommand{\cvgP}[1]{\xrightarrow[#1]{\mathcal P}}
\newcommand{\cvgD}{\xrightarrow[]{\mathcal D}}
\newcommand{\eqdef}{\stackrel{\triangle}{=}} 
\newcommand{\leftrownorm}{\left|\!\left|\!\left|} 
\newcommand{\rightrownorm}{\right|\!\right|\!\right|} 

\begin{document}
\bibliographystyle{plain}
\begin{abstract}
  Consider a $N\times n$ random matrix $Y_n=(Y_{ij}^{n})$ where the
  entries are given by 
$$
Y_{ij}^{n}=\frac{\sigma_{ij}(n)}{\sqrt{n}}
  X_{ij}^{n}\ ,
$$ 
the $X_{ij}^{n}$ being centered, independent and
  identically distributed random variables with unit
  variance and $(\sigma_{ij}(n); 1\le i\le N, 1\le j\le n)$ being an
  array of numbers we shall refer to as a variance profile. We study
  in this article the fluctuations of the random variable
$$
\log\det\left(Y_n Y_n^* + \rho I_N \right)
$$
where $Y^*$ is the Hermitian adjoint of $Y$ and $\rho > 0$ is an
additional parameter.  We prove that when centered and properly
rescaled, this random variable satisfies a Central Limit Theorem (CLT)
and has a Gaussian limit whose parameters are identified. A complete
description of the scaling parameter is given; in particular it is
shown that an additional term appears in this parameter in the case
where the 4$^\textrm{th}$ moment of the $X_{ij}$'s differs from the
4$^{\textrm{th}}$ moment of a Gaussian random variable. 
Such a CLT is of interest in the field of wireless communications. 
\end{abstract}

\maketitle
\noindent \textbf{Key words and phrases:} Random Matrix, empirical distribution of the eigenvalues, Stieltjes Transform.\\
\noindent \textbf{AMS 2000 subject classification:} Primary 15A52, Secondary 15A18, 60F15.

\section{Introduction} 
\subsection*{The model and the statistics} 
Consider a $N\times n$ random matrix $Y_n=(Y_{ij}^{n})$ whose entries are 
given by
\begin{equation}
\label{variance-profile-variable}
Y_{ij}^{n}=\frac{\sigma_{ij}(n)}{\sqrt{n}} X_{ij}^{n}\ ,
\end{equation}
where $(\sigma_{ij}(n),\ 1 \leq i \leq N,\ 1 \leq j \leq n)$  is
a uniformly bounded sequence of real numbers, and the random variables
$X_{ij}^{n}$ are complex, centered, independent and identically distributed
(i.i.d.) with unit variance and finite $8^{\mathrm{th}}$ moment. 
Consider the following linear statistics of the eigenvalues:
$$
{\mathcal I}_n(\rho) = \frac 1N \log \det \left( Y_n Y_n^* + \rho I_N \right)
=\frac 1N \sum_{i=1}^N \log (\lambda_i +\rho)
$$
where $I_N$ is the $N\times N$ identity matrix, $\rho > 0$ is a given
parameter and the $\lambda_i$'s are the eigenvalues of matrix $Y_n
Y_n^*$.  This functional known as the mutual information for multiple
antenna radio channels is very popular in wireless communication.
Understanding its fluctuations and in particular being able to
approximate its standard deviation is of major interest for various
purposes such as for instance the computation of the so-called outage
probability.

\subsection*{Presentation of the results}

The purpose of this article is to establish a Central Limit Theorem (CLT) for
${\mathcal I}_n(\rho)$ whenever $n\rightarrow \infty$ and 
$\frac Nn \rightarrow c\in (0,\infty)$. 

\subsubsection*{The centering procedure} 
It has been proved in Hachem {\em et al.} \cite{HLN07} that there
exists a sequence of deterministic probability measures $(\pi_n)$ such
that the mathematical expectation $\E {\mathcal I}_n(\rho)$ satisfies:
\[ 
\E {\mathcal I}_n(\rho) 
-\int \log(\lambda + \rho)\pi_n(\,d\lambda) 
\xrightarrow[n\rightarrow \infty]{} 0\ .
\] 
Moreover, $\int \log(\lambda + \rho)\pi_n(\,d\lambda)$ has a closed
form formula (see Section \ref{subsec-first-order}) and is easier to
compute\footnote{especially in the important case where the variance
  profile is separable, \emph{i.e.}, where $\sigma^2_{ij}(n)$ is
  written as $\sigma^2_{ij}(n)= d_i(n) \tilde{d}_j(n)$.}  than $\E
{\mathcal I}_n$ (whose evaluation would rely on massive Monte-Carlo
simulations).  For these reasons, we study in this article the
fluctuations of
$$
\frac 1N \log \det(Y_n Y_n^* + \rho I_N ) -  \int \log(\rho +t)\pi_n(\,dt)\ ,
$$
and prove that this quantity properly rescaled converges in distribution 
toward a Gaussian random variable. 
Although phrased differently, such a centering procedure relying on a 
deterministic equivalent is used in \cite{AndZei06} and \cite{BaiSil04}.

In order to prove the CLT, we study separately the quantity
$N({\mathcal I}_n(\rho) - \mathbb{E} {\mathcal I}_n(\rho))$ from which
the fluctuations arise and the quantity $N(\mathbb{E} {\mathcal
  I}_n(\rho) - \int \log(\lambda + \rho)\pi_n(\,d\lambda) )$ which
yields a bias.

\subsubsection*{The fluctuations} We shall prove in this paper that
the variance $\Theta^2_n$ of $N({\mathcal I}_n(\rho) - \mathbb{E}
{\mathcal I}_n(\rho))$ takes a remarkably simple closed-form
expression. In fact, there exists a $n\times n$ deterministic matrix
$A_n$ (described in Theorem \ref{th-variance}) whose entries depend on
the variance profile $(\sigma_{ij})$ such that the variance takes the
form:
$$
\Theta_n^2 =\log\det (I_n -A_n) +\kappa \tr A_n,
$$
where $\kappa= \mathbb{E} |X_{11}|^4 -2$ in the fourth cumulant of the
complex variable $X_{11}$ and the CLT expresses as:
$$
\frac N{\Theta_n}\left( {\mathcal I}_n -\mathbb{E}{\mathcal I}_n
\right) \xrightarrow[n\rightarrow \infty]{\mathcal L} {\mathcal
  N}(0,1).
$$
In the case where $\kappa=0$ (which happens if $X_{ij}$ is a complex
gaussian random variable for instance), the variance has the log-form
$\Theta_n^2 =\log\det (I_n -A_n)$. This has already been noticed for
different models in the engineering literature by Moustakas {\em et
  al.} \cite{MSS03}, Taricco \cite{Tar06}. See also Hachem {\em et
  al.} in \cite{HKLNP06pre}.

\subsubsection*{The bias} It is proved in this paper that there exists
a deterministic quantity ${\mathcal B}_n(\rho)$ (described
in Theorem \ref{th-bias}) such that:
$$
N\left(\mathbb{E} {\mathcal
  I}_n(\rho) - \int \log(\lambda + \rho)\pi_n(\,d\lambda) \right) - {\mathcal B}_n(\rho) \xrightarrow[n\rightarrow \infty]{} 0\ . 
$$
If $\kappa=0$, then ${\mathcal B}_n(\rho)=0$
and there is no bias in the CLT.

\subsection*{About the literature}
Central limit theorems have been widely studied for various models of
random matrices and for various classes of linear statistics of the
eigenvalues in the physics, engineering and  mathematical literature.

In the mathematical literature, CLTs for Wigner matrices 
can be traced back to Girko \cite{Gir75} (see also \cite{Gir03}). 
Results for this class of 
matrices have also been obtained by Khorunzhy {\em et al.} \cite{KKP96}, Johansson
\cite{Joh98}, Sinai and Sochnikov \cite{SinSos98}, Soshnikov
\cite{Sos00}, Cabanal-Duvillard \cite{Cab01}. For band matrices, let
us mention the paper by Khorunzhy {\em et al.} \cite{KKP96},
Boutet de Monvel and Khorunzhy \cite{BouKho98}, Guionnet \cite{Gui02},
Anderson and Zeitouni \cite{AndZei06}. The case of Gram matrices has
been studied in Jonsson \cite{Jon82} and Bai and Silverstein
\cite{BaiSil04}.  For a more detailed overview, the reader is referred
to the introduction in \cite{AndZei06}.  In the physics literature,
so-called replica methods as well as saddle-point methods have long
been a popular tool to compute the moments of the limiting
distributions related to the fluctuations of the statistics of the
eigenvalues.

Previous results and methods have recently been exploited in the
engineering literature, with the growing interest in random matrix
models for wireless communications (see the seminal paper by Telatar
\cite{Tel99} and the subsequent papers of Tse and co-workers
\cite{TseZei00}, \cite{TseHan99}; see also the monograph by Tulino and
Verdu \cite{TulVer04} and the references therein). One main interest
lies in the study of the convergence and the fluctuations of the
mutual information $\frac 1N \log \det \left( Y_n Y_n^* + \rho
  I_N\right)$ for various models of matrices $Y_n$. General
convergence results have been established by the authors in
\cite{HLN07,HLN05,HLN06} while fluctuation results based on Bai
and Silverstein \cite{BaiSil04} have been developed in Debbah and
M\"uller \cite{DebMul03} and Tulino and Verdu \cite{TulVer05}. Other
fluctuation results either based on the replica method or on
saddle-point analysis have been developed by Moustakas, Sengupta and
coauthors \cite{MSS03,SenMit00pre}, Taricco \cite{Tar06}. In a
different fashion and extensively based on the Gaussianity of the
entries, a CLT has been proved in Hachem {\em et al.}
\cite{HKLNP06pre}.

\subsubsection*{Comparison with existing work}
There are many overlaps between this work and other works in the
literature, in particular with the paper by Bai and Silverstein
\cite{BaiSil04} and the paper by Anderson and Zeitouni \cite{AndZei06}
(although this last paper is primarily devoted to band matrix models,
{\em i.e.} symmetric matrices with a symmetric variance profile). The
computation of the variance and the obtention of a closed-form formula
significantly extend the results obtained in \cite{HKLNP06pre}.

In this paper, we deal with complex variables which are more relevant
for wireless communication applications. The case of real random
variables would have led to very similar computation, the cumulant
$\kappa= \mathbb{E}|X|^4-2$ being replaced by $\tilde \kappa =
\mathbb{E} X^4 -3$. Due to the complex nature of the variables, the
CLT in \cite{AndZei06} does not apply directly. Moreover, we
substantially relax the moment assumptions concerning the entries with
respect to \cite{AndZei06} where the existence of moments of all order
is required.  In fact, we shall only assume the finiteness of the
$8^{\mathrm{th}}$ moment.  Bai and Silverstein \cite{BaiSil04}
consider the model $ T_n^{\frac 12} X_n X_n^* T_n^{\frac 12}$ where
the entries of $X_n$ are i.i.d. and have gaussian fourth moment.
%
This assumption can be skipped in our framework, where a good
understanding of the behaviour of the diagonal individual entries of
the resolvent $(-zI_n +Y_n Y_n^*)^{-1}$ enables us to deal with
non-gaussian entries.

On the other hand, it must be noticed that we establish the CLT for
the single functional $\log\det(Y_n Y_n^* + \rho I_N)$ and do not
provide results for a general class of functionals as in
\cite{AndZei06} and \cite{BaiSil04}. We do believe however that all
the computations performed in this article are a good starting point
to address this issue.

\subsection*{Outline of the article}

\subsubsection*{Non-asymptotic vs asymptotic results} 
As one may check in Theorems \ref{th-variance}, \ref{th-clt} and
\ref{th-bias}, we have deliberately chosen to provide non-asymptotic
({\em i.e.} depending on $n$) deterministic formulas for the variance
and the bias that appear in the fluctuations of ${\mathcal
  I}_n(\rho)$. This approach has at least two virtues: Non-asymptotic
formulas exist for very general variance profiles $(\sigma_{ij}(n))$
and provide a natural discretization which can easily be implemented.
In the case where the variance profile is the sampling of some
continuous function , {\em i.e.}  $\sigma_{ij}(n)=\sigma(i/N, j/n)$
(we shall refer to this as the existence of a limiting variance
profile), the deterministic formulas converge as $n$ goes to infinity
(see Section \ref{section-limiting}) and one has to consider Fredholm
determinants in order to express the results.

\subsubsection*{The general approach} 
The approach developed in this article is conceptually simple. The
quantity ${\mathcal I}_n(\rho) - \E {\mathcal I}_n(\rho)$ is
decomposed into a sum of martingale differences; we then
systematically approximate random quantities such as quadratic forms
$\mathbf{x}^T A \mathbf{x}$ where $\mathbf{x}$ is some random vector
and $A$ is some deterministic matrix, by their deterministic
counterparts $\frac 1n \mathrm{Trace}\,A$ (in the case where
the entries of $\mathbf{x}$ are i.i.d. with variance $\frac 1n$) as
the size of the vectors and the matrices goes to infinity.  A careful
study of the deterministic quantities that arise, mainly based on
(deterministic) matrix analysis is carried out and yields the
closed-form variance formula.  The martingale method which is used to
establish the fluctuations of ${\mathcal I}_n(\rho)$ can be traced
back to Girko's {\sc REFORM} ({\sc RE}solvent, {\sc FOR}mula and {\sc
  M}artingale) method (see \cite{Gir75,Gir03}) and is close to the one
developed in \cite{BaiSil04}.

\subsubsection*{Contents}
In Section \ref{sec-notations-assumptions}, we introduce the main
notations, we provide the main assumptions and we recall all the first
order results (deterministic approximation of $\mathbb{E} {\mathcal
  I}_n(\rho)$) needed in the expression of the CLT.  In Section
\ref{sec-results}, we state the main results of the paper: Definition
of the variance $\Theta_n^2$ (Theorem \ref{th-variance}), asymptotic
behaviour (fluctuations) of $N \left( {\mathcal I}_n(\rho) - \E
  {\mathcal I}_n(\rho) \right)$ (Theorem \ref{th-clt}), asymptotic
behaviour (bias) of $N \left( \E {\mathcal I}_n(\rho) - \int
  \log(\rho+t)\pi_n(dt) \right)$ (Theorem \ref{th-bias}).  Section
\ref{proof-variance} is devoted to the proof of Theorem
\ref{th-variance}, Section \ref{proof-clt}, to the proof of Theorem
\ref{th-clt} and Section \ref{sec-proof-bias}, to the proof of Theorem
\ref{th-bias}.

\subsection*{Acknowlegment}
This work was partially supported by the Fonds National de la Science (France)
via the ACI program ``Nouvelles Interfaces des Math\'ematiques'', project MALCOM n$^\circ$ 205.

\section{Notations, assumptions and first order results}
\label{sec-notations-assumptions}
\subsection{Notations and assumptions}
Let $N=N(n)$ be a sequence of integers such that 
$$
\lim_{n\rightarrow \infty} \frac{N(n)}{n} =c\in (0,\infty)\ .
$$
In the sequel, we shall consider a $N\times n$ random matrix $Y_n$
with individual entries:
$$
Y_{ij}^{n}=\frac{\sigma_{ij}(n)}{\sqrt{n}} X_{ij}^{n}\ ,
$$
where $X_{ij}^n$ are complex centered i.i.d random variables with unit variance and 
$(\sigma_{ij}(n);\ 1\le i\le N, 1\le j\le n)$ is a triangular array of real numbers. 
Denote by $\mathrm{var}(Z)$ the variance of the random variable $Z$. 
Since $\mathrm{var}(Y_{ij}^n)=\sigma_{ij}^2(n)/n$,
the family $(\sigma_{ij}(n))$ will be referred to as a variance profile. \\ 

\subsubsection*{The main assumptions}
\begin{assump}
\label{hypo-moments-X}
The random variables $(X_{ij}^n\ ;\ 1\le i\le N,\,1\le j\le n\,,\, n\ge1)$ 
are complex, independent and identically
distributed. They satisfy 
$$
\E X_{ij}^n = \E(X_{ij}^n)^2=0, \quad
\E|X_{ij}^n|^2=1 \quad \mathrm{and} \quad 
\E|X_{ij}^n|^{8}<\infty \ . 
$$
\end{assump}

\begin{assump}
\label{hypo-variance-field}
There exists a finite positive real number $\sigma_{\max}$ such that the 
family of real numbers 
$(\sigma_{ij}(n),\ 1\le i\le N,\ 1\le j\le n,\  n\ge 1)$ satisfies:
$$
\sup_{n\ge 1}\  
\max_{\genfrac{}{}{0pt}{}{1\le i\le N}{1\le j\le n} } 
|\sigma_{ij}(n)| \leq \sigma_{\max} \ .
$$
\end{assump}

\begin{assump}
\label{hypo-inf-trace-Dj}
There exists a real number $\sigma_{\min}^2 > 0$ such that
$$
\liminf_{n \geq 1} \min_{1\le j\le n}
\frac{1}{n} \sum_{i=1}^N \sigma^2_{ij}(n)
\geq \sigma_{\min}^2 \ . 
$$
\end{assump}
Sometimes we shall assume that the variance profile is obtained by sampling
a function on the unit square of $\R^2$. 
This helps to get limiting expressions and limiting behaviours 
(cf. Theorem \ref{prop-limit-var-profile-existence}): 

\begin{assump}
\label{hypo-limit-variance-field}
There exists a continuous function $\sigma^2 : [0,1] \times [0,1] \rightarrow
(0,\infty)$ such that $\sigma_{ij}^2(n) = \sigma^2(i/N, j/n)$.  
\end{assump}

\subsubsection*{Remarks related to the assumptions}
\begin{enumerate}
\item Using truncation arguments \`a la Bai and Silverstein
  \cite{BaiSil98,Sil95,SilBai95}, one may lower the moment assumption
  related to the $X_{ij}$'s in {\bf A-\ref{hypo-moments-X}}.
\item Obviously, assumption {\bf A-\ref{hypo-inf-trace-Dj}} holds if $\sigma_{ij}^2$ is uniformly
lower bounded by some nonnegative quantity. 
\item Obviously, assumption {\bf A-\ref{hypo-limit-variance-field}} implies both {\bf
    A-\ref{hypo-variance-field}} and {\bf A-\ref{hypo-inf-trace-Dj}}. When {\bf A-\ref{hypo-limit-variance-field}} holds, we shall say that
there exists a limiting variance profile.
\item If necessary, assumption {\bf A-\ref{hypo-inf-trace-Dj}} can be 
slightly improved by stating:
\[
\max\left( 
 \liminf_{n\ge 1} \min_{1\le j\le n} \frac{1}{n} \sum_{i=1}^N \sigma^2_{ij}(n) \ , \ 
 \liminf_{n\ge 1} \min_{1\le i\le N} \frac{1}{n} \sum_{j=1}^n \sigma^2_{ij}(n) 
\right) > 0 \ . 
\]
In the case where the first liminf is zero, one may notice that 
$\log\det(Y_n Y_n^* + \rho I_N ) = \log\det(Y^*_n Y_n + \rho I_n) + (n-N)\log\rho$ and 
consider $Y_n^* Y_n$ instead.
\end{enumerate}

\subsubsection*{Notations}
The indicator function of the set ${\mathcal A}$ will be denoted by
${\bf 1}_{\mathcal A}(x)$, its cardinality by $\#{\mathcal A}$. 
As usual, $\Rplus = \{ x \in \R \ : \ x \geq 0 \}$ and 
$\Cplus = \{ z \in \C \ : \ \im(z) > 0 \}$.

We denote by $\cvgP{ }$ the convergence in probability of random variables and 
by $\cvgD$ the convergence in distribution of probability measures.  

Denote by $\mathrm{diag}(a_i;\,1\le i\le k)$ the $k\times k$ diagonal
matrix whose diagonal entries are the $a_i$'s.  Element $(i,j)$ of
matrix $M$ will be either denoted $m_{ij}$ or $[M]_{ij}$ depending on
the notational context.  Denote by $M^T$ the matrix transpose of $M$,
by $M^*$ its Hermitian adjoint, by $\tr (M)$ its trace and $\det(M)$ its
determinant (if $M$ is square), and by $F^{M\,M^*}$, the empirical
distribution function of the eigenvalues of $M\,M^*$, \emph{i.e.}
\[
F^{M\,M^*}(x) = \frac{1}{N} \#\{ i : \lambda_i \leq x \} \ ,
\]
where $M\,M^*$ has dimensions
$N \times N$ and the $\lambda_i$'s are the eigenvalues of $M\,M^*$.

When dealing with vectors, $\|\cdot\|$ will refer to the Euclidean 
norm, and $\| \cdot \|_\infty$, to the max (or $\ell_\infty$) norm. 
In the case of matrices, $\|\cdot\|$ will refer to the 
spectral norm and $\leftrownorm \cdot \rightrownorm_\infty$ to
the maximum row sum norm (referred to as the max-row norm), \emph{i.e.}, 
$\leftrownorm M \rightrownorm_\infty = \max_{1\le i \le N} \sum_{j=1}^N
| [ M ]_{ij} |$ when $M$ is a $N \times N$ matrix. 
We shall denote by $r(M)$ the spectral radius of matrix $M$.

When no confusion can occur, we shall often drop subscripts and superscripts
$n$ for readability. 

\subsection{Stieltjes Transforms and Resolvents}
\label{subsec-ST-resolvents} 
In this paper, Stieltjes transforms of probability measures
play a fundamental role. 
Let $\nu$ be a bounded non-negative measure over $\mathbb{R}$. Its Stieltjes 
transform $f$ is defined as: 
$$
f(z)=\int_{\mathbb{R}} \frac{\nu(d\lambda)}{\lambda-z},\quad 
z \in \C \setminus \mathrm{supp}(\nu)\ , 
$$
where $\mathrm{supp}(\nu)$ is the support of the measure $\nu$. 
We shall denote by ${\mathcal S}(\Rplus)$ the set of Stieltjes transforms
of probability measures with support in $\Rplus$. 

We list in the following proposition the main properties of the Stieltjes 
transforms that will be needed in the paper: 
\begin{prop} The following properties hold true.
\label{prop-properties-ST} 
\begin{enumerate}
\item Let $f$ be the Stieltjes transform of a probability measure $\nu$ on $\R$, then:
\begin{itemize}
\item[-] The function $f$ is analytic over $\C \setminus \mathrm{supp}(\nu)$.
\item[-] If $f(z) \in {\mathcal S}(\Rplus)$,
then $| f(z)| \le ({\bf d}(z, \Rplus))^{-1}$ where ${\bf d}(z, \Rplus)$ denotes the
distance from $z$ to $\Rplus$.\\ 
\end{itemize}
\item 
\label{prop-properties-ST-convergence} 
Let $\mathbb{P}_n$ and $\mathbb{P}$ be probability measures over 
$\mathbb{R}$ and denote by $f_n$ and $f$
their Stieltjes transforms. Then 
$$
\left( \forall z\in\Cplus,\ f_n(z) \xrightarrow[n\rightarrow\infty]{} f(z)\right) \quad \Rightarrow \quad 
\mathbb{P}_n\xrightarrow[n\rightarrow\infty]{\mathcal D} \mathbb{P}.  
$$
\end{enumerate}
\end{prop}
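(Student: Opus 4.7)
The plan is to treat each of the three assertions separately. For analyticity, fix $z_0 \in \C \setminus \mathrm{supp}(\nu)$ and choose $r > 0$ with $\overline{B(z_0,r)} \cap \mathrm{supp}(\nu) = \emptyset$; on this closed ball, $(\lambda,z) \mapsto (\lambda-z)^{-1}$ is jointly continuous and holomorphic in $z$ uniformly in $\lambda \in \mathrm{supp}(\nu)$, so Morera's theorem combined with Fubini (or equivalently differentiation under the integral, justified by dominated convergence) yields holomorphy of $f$ in a neighborhood of $z_0$. For the bound, observe that when $\mathrm{supp}(\nu) \subset \Rplus$ and $\lambda \in \mathrm{supp}(\nu)$, one has $|\lambda - z| \geq {\bf d}(z,\Rplus)$ by definition of the distance; the triangle inequality under the integral together with $\nu(\R)=1$ then gives $|f(z)| \leq {\bf d}(z,\Rplus)^{-1}$ at once.

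The convergence statement is more substantive and I would prove it via Helly's selection theorem combined with the injectivity of the Stieltjes transform. The hypothesis already provides a probability measure $\mathbb{P}$ with Stieltjes transform $f$, so the issue is only to show $\mathbb{P}_n \cvgD \mathbb{P}$. By the subsequence principle, it suffices to prove that every subsequence of $(\mathbb{P}_n)$ admits a further subsequence $(\mathbb{P}_{n_k})$ converging weakly to $\mathbb{P}$. Helly's theorem extracts a sub-subsequence converging vaguely to some sub-probability measure $\mathbb{Q}$ on $\R$. Since for $z \in \Cplus$ the function $\lambda \mapsto (\lambda-z)^{-1}$ belongs to $C_0(\R)$, vague convergence is enough to pass to the limit inside the integral defining the Stieltjes transform, so the Stieltjes transform of $\mathbb{Q}$ coincides with $f$ on $\Cplus$.

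The core technical point, and the expected main obstacle, is to upgrade vague convergence to weak convergence, i.e.\ to rule out loss of mass and conclude that $\mathbb{Q}$ is a probability measure. The key identity is
$$
y\,\im f_n(iy) \;=\; \int_\R \frac{y^2}{\lambda^2+y^2}\,\mathbb{P}_n(d\lambda),
$$
which yields the tail bound
$$
\mathbb{P}_n\bigl([-A,A]^c\bigr) \;\leq\; \frac{A^2+y^2}{A^2}\bigl(1 - y\,\im f_n(iy)\bigr)
$$
for all $A,y > 0$. Since $\mathbb{P}$ is a probability measure, $y\,\im f(iy) \to 1$ as $y \to \infty$; given $\varepsilon > 0$, one chooses $y$ large so that $1 - y\,\im f(iy) < \varepsilon/2$, and the pointwise convergence $f_n(iy) \to f(iy)$ then upgrades this to $1 - y\,\im f_n(iy) < \varepsilon$ uniformly in $n$ large enough. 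Choosing $A$ sufficiently large delivers uniform tightness of $(\mathbb{P}_n)$, whence $\mathbb{Q}(\R)=1$. Finally, injectivity of the Stieltjes transform, which is classical and follows from the Stieltjes inversion formula applied on $\Cplus$, forces $\mathbb{Q} = \mathbb{P}$, completing the argument.
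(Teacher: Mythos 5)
The paper states this proposition without proof, presenting it as a collection of standard facts about Stieltjes transforms, so there is no argument in the source to compare against. Your proof is correct and is the textbook one: the analyticity and the resolvent-type bound are immediate, and for the convergence you use Helly selection, identify the vague limit's Stieltjes transform, control tightness via the identity $y\,\im f_n(iy)=\int y^2/(\lambda^2+y^2)\,d\mathbb{P}_n$, and finish with Stieltjes inversion. One small streamlining worth noting: once you know the Stieltjes transform of the vague limit $\mathbb{Q}$ agrees with $f$ on $\Cplus$, the inversion formula (which holds for any finite, in particular sub-probability, measure) already forces $\mathbb{Q}=\mathbb{P}$, and hence $\mathbb{Q}(\R)=1$ is automatic; the separate tightness estimate is therefore not strictly needed, although it is a perfectly valid and instructive alternative route to the same conclusion.
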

There are very close ties between the Stieltjes transform of the
empirical distribution of the eigenvalues of a matrix and the
resolvent of this matrix.  Let $M$ be a $N \times n$ matrix. The
resolvent of $M M^*$ is defined as:
$$
Q(z)=(M M^* -z\,I_N)^{-1}=\left(q_{ij}(z)\right)_{1\le i,j,\le N},
\quad z\in \C - \Rplus\ . 
$$
The following properties are straightforward.
\begin{prop}
\label{prop-resolvent-properties} 
Let $Q(z)$ be the resolvent of $M M^*$, then:
\begin{enumerate}
\item The function $h_n(z)=\frac1N \tr\ Q(z)$ is the Stieltjes 
transform of the empirical distribution of the eigenvalues of $M M^*$. Since the 
eigenvalues of this matrix are non-negative, 
$h_n(z) \in {\mathcal S}(\Rplus)$.
\item For every $z\in\C-\Rplus$, $ \|Q(z)\|\le \left( {\bf d}(z, \Rplus) 
\right)^{-1}$. In particular, if $\rho>0$, $ \|Q(-\rho)\|\le \rho^{-1}$.
 \\ 
\end{enumerate}
\end{prop}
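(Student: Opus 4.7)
The plan is to reduce both statements to the spectral decomposition of the Hermitian positive semi-definite matrix $M M^*$ and then read off the claims via functional calculus.

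First I would write $M M^* = U \Lambda U^*$ with $U$ unitary and $\Lambda = \diag(\lambda_1, \ldots, \lambda_N)$, where $\lambda_i \ge 0$. For $z \in \C \setminus \Rplus$, the matrix $\Lambda - z I_N$ is invertible (its diagonal entries $\lambda_i - z$ are nonzero since $\lambda_i \in \Rplus$), so $Q(z) = U(\Lambda - z I_N)^{-1} U^*$ is well-defined. Taking the trace and using $\tr(UAU^*) = \tr(A)$ gives
\[
h_n(z) = \frac{1}{N} \tr Q(z) = \frac{1}{N} \sum_{i=1}^N \frac{1}{\lambda_i - z} = \int_{\R} \frac{F^{M M^*}(d\lambda)}{\lambda - z},
\]
which is the Stieltjes transform of the empirical distribution $F^{M M^*}$ of the eigenvalues of $M M^*$. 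Because every $\lambda_i$ lies in $\Rplus$, the measure $F^{M M^*}$ is supported in $\Rplus$ and hence $h_n(z) \in \mathcal{S}(\Rplus)$. This proves assertion (1).

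For assertion (2), I would invoke unitary invariance of the spectral norm to write $\|Q(z)\| = \|(\Lambda - z I_N)^{-1}\| = \max_{1 \le i \le N} |\lambda_i - z|^{-1}$. Since $\lambda_i \in \Rplus$ for every $i$, each distance $|\lambda_i - z|$ is bounded below by $\mathbf{d}(z,\Rplus) = \inf_{t \ge 0}|t - z|$, giving
\[
\|Q(z)\| \le \frac{1}{\mathbf{d}(z, \Rplus)}.
\]
The particular case $z = -\rho$ with $\rho > 0$ follows at once from $\mathbf{d}(-\rho, \Rplus) = \rho$.

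There is no genuine obstacle here: once one works in the diagonalizing basis, both statements are direct consequences of the nonnegativity of the eigenvalues of $M M^*$ and the standard identity between the normalized trace of a resolvent and the Stieltjes transform of the spectral measure. The only care needed is to record that $z \notin \Rplus$ ensures invertibility of $\Lambda - zI_N$ and that the bound on $\|Q(z)\|$ uses the spectral norm's coincidence with the largest singular value of a diagonal (normal) matrix.
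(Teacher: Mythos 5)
Your proof is correct. The paper states these properties as "straightforward" and gives no argument; your route through the spectral decomposition of the Hermitian positive semidefinite matrix $M M^*$ is exactly the standard one that the authors have in mind, and every step (unitary invariance of trace and spectral norm, lower bound $|\lambda_i - z| \geq \mathbf{d}(z,\Rplus)$ for $\lambda_i \in \Rplus$) is valid.
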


\subsection{First Order Results: A primer} 
\label{subsec-first-order}
Recall that ${\mathcal I}_n(\rho)=\frac 1N \log \det (Y_n Y_n^* +\rho I)$ and let $\rho>0$.
We remind below some results related to the asymptotic behaviour
of $\E {\mathcal I}_n(\rho)$. 
As 
$$
{\mathcal I}_n(\rho) = 
\frac 1N \sum_{i=1}^N \log\left( \lambda_i + \rho \right)
= 
\int_0^\infty \log(\lambda + \rho) \, dF^{Y_n Y_n^*}(\lambda) \ , 
$$
where the $\lambda_i$'s are the eigenvalues of $Y Y^*$, 
the approximation of $\E {\mathcal I}_n(\rho)$ is closely related to the  
``first order'' 
approximation of $F^{Y_n Y_n^*}$ as $n \to \infty$ and $N / n \to c > 0$. 

The following theorem summarizes the first order results needed
in the sequel. It is a direct consequence of \cite[Sections 2
and 4]{HLN07} (see also \cite{Gir01a}):
\begin{theo}[\cite{HLN07}, \cite{Gir01a}]\label{first-order}
  Consider the family of random matrices  $(Y_n Y_n^*)$ and assume
  that {\bf A-\ref{hypo-moments-X}} and {\bf A-\ref{hypo-variance-field}} hold. Then, the following hold true:
\begin{enumerate}
\item The system of $N$ functional 
equations: 
\begin{equation}
\label{eq-systeme-approx-deter} 
t_{i}(z) =  
\frac{1}
{-z  + \frac 1n \sum_{j=1}^n 
\frac{\sigma^2_{ij}(n)}
{1 + \frac{1}{n} \sum_{\ell=1}^N \sigma^2_{\ell j}(n) t_{\ell}(z)} }
\end{equation} 
admits a unique solution $\left(t_{1}(z),\cdots,t_{N}(z) \right)$ in
${\mathcal S}(\Rplus)^{N}$. In particular, $m_n(z)=\frac 1N
\sum_{i=1}^N t_i(z)$ belongs to ${\mathcal S}(\Rplus)$ and there
exists a probability measure $\pi_n$ on $\Rplus$ such that:
$$ 
m_n(z)=\int_{0}^\infty \frac{\pi_n(d\lambda)}{\lambda-z} \ .
$$

\item For every continuous and bounded function $g$ on $\mathbb{R}^+$,
$$
\int_{\mathbb{R}^+} g(\lambda) \, dF^{Y_n Y_n^*}(\lambda) -
\int_{\mathbb{R}^+} g(\lambda) \, \pi_n(d\lambda)
\xrightarrow[n\rightarrow \infty]{}0 \quad \textrm{a.e.}
$$ 
\item The function $V_n(\rho) = 
\int_{\mathbb{R}^+} \log(\lambda + \rho)\pi_n(d\lambda)$ is finite for every 
$\rho > 0$ and
$$
\E {\mathcal I}_n(\rho) - V_n(\rho) \xrightarrow[n\rightarrow
\infty]{}0 \qquad \textrm{where}\quad {\mathcal I}_n(\rho) = \frac 1N \log
\det \left( Y_n Y_n^* + \rho I_N \right)\ .
$$
Moreover, $V_n(\rho)$ 
admits the following closed form formula:
\begin{multline*}
V_n(\rho) = 
- \frac 1N \sum_{i=1}^N \log t_i(-\rho)  
+ \frac 1N \sum_{j=1}^n 
\log\left( 1 + \frac 1n \sum_{\ell=1}^N \sigma^2_{\ell j}(n) t_{\ell}(-\rho) \right) 
\\ 
- \frac{1}{Nn} \sum_{i=1:N,j=1:n} 
\frac{\sigma^2_{ij}(n) t_i(-\rho)}
{1 + \frac 1n \sum_{\ell=1}^N \sigma^2_{\ell j}(n) t_{\ell}(-\rho)} \ .
\end{multline*}
where the $t_i$'s are defined above.
\end{enumerate}
\end{theo}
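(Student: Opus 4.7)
The plan is to handle the three items in order, following the Stieltjes transform approach to Gram matrices with a variance profile.

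For item (1), I fix $z \in \Cplus$ and consider the map $\Phi_z: (t_1,\ldots,t_N)\mapsto (F_i(t))_i$ whose components are the right-hand sides of (\ref{eq-systeme-approx-deter}). A direct check shows that whenever each $t_i$ has $\im t_i > 0$ so does each $F_i$, and the cone $\{(t_i): \im t_i > 0,\ |t_i| \le (\im z)^{-1}\}$ is invariant under $\Phi_z$; existence of a fixed point then follows by a compactness/normal-family argument. Uniqueness is obtained via contraction in a weighted sup-norm, first for $\im z$ large enough and then extended to all of $\Cplus$ by analytic continuation, the $t_i$ being analytic functions of $z$. Checking the asymptotics $t_i(z) \sim -1/z$ as $|z| \to \infty$ and the positivity of $\im t_i$ on $\Cplus$ ensures that each $t_i$ belongs to $\mathcal{S}(\Rplus)$, and hence that the convex combination $m_n = \frac{1}{N}\sum t_i$ defines a probability measure $\pi_n$ on $\Rplus$.

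For item (2), I introduce the resolvent $Q_n(z) = (Y_nY_n^* - zI_N)^{-1}$ and couple it to the system via Schur complements: denoting by $y_i^*$ the $i$-th row of $Y_n$, one has $q_{ii}(z)^{-1} = -z - y_i^* Q_{(i)}(z) y_i$ where $Q_{(i)}$ is built from the matrix with row $i$ removed. A standard trace lemma (valid under Assumption \textbf{A-\ref{hypo-moments-X}}) replaces $y_i^* Q_{(i)} y_i$ by $\frac{1}{n}\tr(D_i Q_{(i)})$ with $D_i = \diag(\sigma_{ij}^2(n);\ 1\le j\le n)$ acting on the $n\times n$ side, after invoking the companion identity for $Y_n^* Y_n$. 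One then obtains that $(\E q_{ii}(z))_i$ satisfies an approximate version of (\ref{eq-systeme-approx-deter}); the uniqueness from item (1), combined with a stability estimate on the system, gives $\E q_{ii}(z) - t_i(z) \to 0$. An almost-sure version follows from a martingale-difference decomposition along rows together with the Burkholder inequality, yielding $\frac1N\tr Q_n(z) - m_n(z)\to 0$ a.s.; Proposition \ref{prop-properties-ST}(\ref{prop-properties-ST-convergence}) then implies the weak convergence of $F^{Y_nY_n^*} - \pi_n$ to zero a.s., whence convergence of integrals of bounded continuous functions.

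For item (3), the function $\lambda \mapsto \log(\lambda + \rho)$ is unbounded, so item (2) does not apply verbatim. I bound $\|Y_nY_n^*\|$ uniformly in $n$ with overwhelming probability (using Assumption \textbf{A-\ref{hypo-moments-X}} and $\sigma_{\max}$) and obtain a matching bound on the support of $\pi_n$ via its Stieltjes transform $m_n$, so that a cutoff reduces to the bounded case. For the closed-form formula, I verify the identity by differentiation in $\rho$: differentiating the candidate expression for $V_n(\rho)$ and substituting $t_i'(-\rho)$ obtained from (\ref{eq-systeme-approx-deter}), the cross-terms arising from the double sum cancel precisely by the fixed-point relations, leaving $\frac{d V_n}{d\rho}=m_n(-\rho) = \frac{d}{d\rho}\int \log(\lambda+\rho)\pi_n(d\lambda)$; matching behaviours as $\rho \to \infty$ (both sides grow like $\log \rho$) fixes the constant of integration. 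The main obstacle is to secure, under only the eighth-moment hypothesis, quantitative concentration of $y_i^* Q_{(i)} y_i$ around $\frac{1}{n}\tr(D_i Q_{(i)})$ at a rate compatible with propagation through the coupled nonlinear system, together with a stability argument for (\ref{eq-systeme-approx-deter}) that does not deteriorate as $z$ is pushed toward $-\rho$ on the real axis; this requires careful use of the $(\im z)^{-1}$-bound for $Q$ and uniform regularity estimates for the $t_i$ away from $\Rplus$.
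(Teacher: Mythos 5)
Note first that the paper does not supply an independent proof of Theorem~\ref{first-order}. It is stated as imported from \cite{HLN07} and \cite{Gir01a}, and the only accompanying argument is the short remark following Lemma~\ref{th-deter-approx-details}, which reduces item~(2) to that lemma (itself quoted from \cite[Theorem~2.4, Lemmas~6.1 and~6.6]{HLN07}) together with Proposition~\ref{prop-properties-ST}--(\ref{prop-properties-ST-convergence}). Your blind sketch therefore reconstructs, from scratch, something the paper only cites. That said, the plan you lay out --- fixed point plus analyticity and normal-family arguments for~(1), Schur complement, trace lemma, and a martingale-difference plus Burkholder decomposition for~(2), a tail cutoff together with differentiation of the closed form for~(3) --- is the standard resolvent route and is the one followed in \cite{HLN07}, so the overall approach is the right one.

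Two points worth tightening. First, the Schur identity you write for the diagonal resolvent entry is off: for $Y_nY_n^*$ the exact relation is $[Q(z)]_{ii} = -1/\bigl(z(1 + \xi_i\,\ti Q_i(z)\,\xi_i^*)\bigr)$, i.e.\ (\ref{eq-qii}), with $\xi_i$ the $i$-th \emph{row} of $Y_n$ and $\ti Q_i(z)=(Y_i^*Y_i - zI_n)^{-1}$ the $n\times n$ companion resolvent after deleting that row. Your version $q_{ii}^{-1}=-z-y_i^*Q_{(i)}y_i$ drops the factor $z$ multiplying the quadratic form; pushed through the averaging step it does \emph{not} reproduce the fixed-point system (\ref{eq-systeme-approx-deter}). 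Second, your worry that the estimates may degrade ``as $z$ is pushed toward $-\rho$ on the real axis'' is a non-issue: the relevant bound is $\|Q(z)\|\le({\bf d}(z,\Rplus))^{-1}$ (Proposition~\ref{prop-resolvent-properties}), and $-\rho<0$ stays at distance $\rho$ from the spectrum of the nonnegative matrix $Y_nY_n^*$, so $\|Q(-\rho)\|\le\rho^{-1}$ uniformly in $n$; the $(\im z)^{-1}$ bound is not the one to invoke here. The genuinely hard steps --- uniqueness and stability of the nonlinear system, and concentration of the quadratic forms under only an eighth-moment hypothesis --- are precisely the content of the cited parts of \cite{HLN07}, not something this paper proves, so a self-contained version of your sketch would need to reproduce that work.
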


Theorem \ref{first-order} partly follows from the following lemma which will
be often invoked later on and whose statement emphasizes the
symmetry between the study of $Y_n Y_n^*$ and $Y_n^* Y_n$.  Denote by
$Q_n(z)$ and $\ti Q_n(z)$ the resolvents of $Y_n Y_n^*$ and $Y_n^*
Y_n$, i.e.
\begin{eqnarray*} 
Q_n(z) &=& (Y_n Y_n^* -zI_N)^{-1}= \left( q_{ij}(z)\right)_{1\le i,j\le N},
\quad z\in \C - \Rplus \\ 
\ti{Q}_n(z) &=&  (Y_n^* Y_n -zI_n)^{-1}= 
\left(\ti{q}_{ij}(z)\right)_{1\le i,j\le n},\quad z\in \C - \Rplus \ . 
\end{eqnarray*} 

\begin{lemma}
  \label{th-deter-approx-details} Consider the family of random
  matrices $(Y_n Y_n^*)$ and assume that {\bf A-\ref{hypo-moments-X}} and
  {\bf A-\ref{hypo-variance-field}} hold. Consider the following system of
  $N+n$ equations:
\begin{equation}
\left\{
\begin{array}{ll}
t_{i,n}(z)&= \frac{-1}{z\left( 1 +\frac 1n \tr \ti D_{i,n} \ti T_n(z)
\right)}
\quad \textrm{for}\quad 1\le i\le N \nonumber \\
\ti t_{j,n}(z)&= \frac{-1}{z\left( 1 +\frac 1n \tr D_{j,n} T_n(z)
\right)}\quad \textrm{for}\quad 1\le j\le n
\end{array}\right. 
\label{eq-approx-determinist-complete} 
\end{equation}
where 
\begin{eqnarray*}
T_n(z)&=\diag(t_{i,n}(z),\, 1\le i\le N), 
\quad \ti T_n(z)&=\diag(\ti t_{j,n}(z),\, 1\le j\le n)\ ,\\
D_{j,n}&=\diag(\sigma^2_{ij}(n),\, 1\le i\le N),\quad \ti 
D_{i,n} &=\diag(\sigma^2_{ij}(n),\, 1\le j\le n)\ .
\end{eqnarray*}
Then the following holds true: 
\begin{itemize}
\item[(a)] \cite[Theorem 2.4]{HLN07} This system admits a unique solution
$$
(t_{1,n},\cdots,t_{N,n},\ti t_{1,n},\cdots \ti t_{n,n}) 
\in {\mathcal S}(\mathbb{R}^+)^{N+n}
$$ 
\item[(b)] \cite[Lemmas 6.1 and 6.6]{HLN07}
For every sequence $U_n$ of $N \times N$ diagonal matrices 
and every sequence $\widetilde{U}_n$ of $n \times n$ diagonal matrices
such as $\sup_n \max\left( \| U_n \|,  \| \ti{U}_n \| \right) < \infty \, $,
the following limits hold true almost surely:
\[
\begin{array}{lcll}
\displaystyle{\lim_{n\to\infty, N/n \to c} 
\frac 1N \tr\left( U_n \left( Q_n(z) - T_n(z) \right) \right)} &=& 0 & 
\forall z \in \C - \Rplus, \\
\displaystyle{\lim_{n\to\infty, N/n \to c} 
\frac 1n \tr\left( \ti U_n \left( \ti Q_n(z) - \ti T_n(z) \right) \right)}  
&=& 0 & \forall z \in \C - \Rplus \ . 
\end{array} 
\]
\end{itemize}
\end{lemma}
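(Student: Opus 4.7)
The plan is to establish (a) via a fixed-point argument on $\mathcal{S}(\mathbb{R}^+)^{N+n}$ and (b) via a resolvent-identity expansion combined with concentration of quadratic forms, exploiting throughout the coupling between the primal system $T_n$ and the dual system $\tilde{T}_n$. The two parts share a common operator $K_z$, which is the engine of both the uniqueness proof and the error analysis.

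For (a), I would introduce the map $\Phi_z$ on $\mathcal{S}(\mathbb{R}^+)^{N+n}$ sending $(t,\tilde{t})$ to the pair whose components are the right-hand sides of (\ref{eq-approx-determinist-complete}). Stability under $\Phi_z$ rests on two classical observations: if $f\in\mathcal{S}(\mathbb{R}^+)$ and $a\ge 0$, then $z\mapsto -1/(z(1+af(z)))$ again lies in $\mathcal{S}(\mathbb{R}^+)$ after a positive rescaling; and positive linear combinations of Stieltjes transforms of measures on $\Rplus$ are again of this form. For uniqueness, I would subtract two putative solutions $(t,\tilde{t})$ and $(t',\tilde{t}')$, linearize the equations, and show that the resulting coefficient matrix $K_z$ on the pair of increments $(\delta t,\delta\tilde{t})$ has spectral radius $r(K_z) < 1$ at least in a region of large $|\im z|$; Perron-Frobenius applied to the non-negative matrix $|K_z|$ together with the bound $|f(z)|\le (\mathbf{d}(z,\Rplus))^{-1}$ from Proposition \ref{prop-properties-ST} delivers this. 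Analyticity in $z$ of any solution, combined with a connectedness argument, then extends uniqueness throughout $\mathbb{C}\setminus\Rplus$.

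For (b), the starting point is the Schur complement identity
$$
q_{ii}(z) \;=\; \frac{-1}{z\bigl(1 + \tfrac{1}{n}\tr(\tilde{D}_{i,n}\tilde{Q}^{(i)}(z)) + \varepsilon_i(z)\bigr)},
$$
where $\tilde{Q}^{(i)}$ is the resolvent of $(Y^{(i)})^*Y^{(i)}$, $Y^{(i)}$ being $Y_n$ with its $i$-th row removed, and $\varepsilon_i$ collects the fluctuation of the quadratic form $r_i \tilde{Q}^{(i)} r_i^*$ around its expectation $\frac{1}{n}\tr(\tilde{D}_{i,n}\tilde{Q}^{(i)})$. A Burkholder-type inequality for the quadratic form, combined with a rank-one perturbation bound of the form $\frac{1}{n}|\tr(\tilde{D}_{i,n}(\tilde{Q}^{(i)}-\tilde{Q}_n))|\le C/n$, gives $\varepsilon_i(z)\to 0$ almost surely uniformly in $i$ through a Borel-Cantelli step; the finite $8$-th moment from A-\ref{hypo-moments-X} is amply sufficient. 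Subtracting the defining equation of $T_n$ and using $A^{-1}-B^{-1}=A^{-1}(B-A)B^{-1}$, I would reduce $\frac{1}{N}\tr(U_n(Q_n-T_n))$ and its dual to a coupled linear system in the diagonal errors $\delta_i = q_{ii}-t_{i,n}$ and $\tilde{\delta}_j=\tilde{q}_{jj}-\tilde{t}_{j,n}$, driven by a noise vector built from the $\varepsilon_i$'s. Inverting the same $I-K_z$ that appeared in the uniqueness proof then closes the loop.

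The main obstacle is obtaining a deterministic bound on $(I-K_z)^{-1}$ that is uniform in $n$ and independent of $U_n,\tilde{U}_n$. Assumption A-\ref{hypo-inf-trace-Dj} plays the decisive role here: the lower bound on column sums of $\sigma_{ij}^2$ prevents $\tilde{t}_{j,n}(z)$ from vanishing, which is what keeps $r(K_z)$ bounded away from $1$. Managing this carefully, especially when $z=-\rho$ is real and close to the support of $\pi_n$, is the delicate part; I would handle it by reducing the matrix-level estimate to a scalar companion equation for an auxiliary Stieltjes transform, and only then return to the almost-sure statement by applying Borel-Cantelli on a countable dense subset of $\mathbb{C}\setminus\Rplus$ combined with Vitali's convergence theorem for analytic functions.
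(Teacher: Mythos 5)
The paper does not prove Lemma~\ref{th-deter-approx-details}: both parts are quoted from \cite{HLN07} (Theorem 2.4 for (a), Lemmas 6.1 and 6.6 for (b)), so there is no in-text argument to compare against. Your sketch is a genuine attempt at what the paper takes for granted, and its overall architecture (fixed point plus Perron--Frobenius for existence/uniqueness, Schur complement with quadratic-form concentration and rank-one perturbation for the resolvent comparison, closed by inverting an $I-K_z$-type matrix, extended across $z$ via normal-family arguments) is the standard machinery for deterministic equivalents of Gram matrices with a variance profile.

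There is, however, a real gap. You invoke {\bf A-\ref{hypo-inf-trace-Dj}} as ``the decisive role'' in keeping $r(K_z)$ bounded away from $1$, but the lemma (like Theorem~\ref{first-order}) is stated under {\bf A-\ref{hypo-moments-X}} and {\bf A-\ref{hypo-variance-field}} only; {\bf A-\ref{hypo-inf-trace-Dj}} enters the paper later, exclusively for the second-order (variance/CLT) analysis. A proof that needs A-\ref{hypo-inf-trace-Dj} does not prove the lemma as stated. In fact A-\ref{hypo-inf-trace-Dj} is not needed: once you have contraction for $|\im z|$ large (where $\|Q\|,\|\ti Q\|\le|\im z|^{-1}$ kills the coefficients), the convergence at every $z\in\C\setminus\Rplus$ follows because $\frac 1N\tr U(Q_n-T_n)$ is analytic there and uniformly bounded on compacts by $2\|U\|\,(\mathbf{d}(z,\Rplus))^{-1}$, so Vitali/Montel carries the a.s.\ limit over the whole domain. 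Likewise your worry about ``$z=-\rho$ real and close to the support of $\pi_n$'' is moot --- any $z\notin\Rplus$ has strictly positive distance to the support, and the statement is pointwise in $z$, not uniform over $\rho\downarrow 0$. Excise the dependence on A-\ref{hypo-inf-trace-Dj} and lean fully on the analytic-continuation step and the argument closes. One minor slip: if $f\in\mathcal{S}(\Rplus)$ and $a\ge 0$ then $z\mapsto -1/(z(1+af(z)))$ is \emph{exactly} in $\mathcal{S}(\Rplus)$, not ``after a positive rescaling'' --- the $\sim -1/z$ asymptotics at infinity already normalize the associated measure to mass one.
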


In the case where there exists a limiting variance profile, the results
can be expressed in the following manner:
\begin{theo}
[\cite{BKV96}, \cite{Gir90}, \cite{HLN06} ] 
\label{prop-limit-var-profile-existence}
Consider the family of random matrices $(Y_n Y_n^*)$ and assume that
{\bf A-\ref{hypo-moments-X}} and {\bf A-\ref{hypo-limit-variance-field}}
hold. Then: 
\begin{enumerate}
\item The functional equation
\begin{equation}
\label{eq-func-limit-var-profile}
\tau(u,z)=\left( -z +\int_0^1 \frac{\sigma^2(u,v)}{1+c\int_0^1 \sigma^2(x,v)\tau(x,z)\,dx}dv \right)^{-1}
\end{equation}
admits a unique solution among the class of functions 
$\Phi:[0,1]\times \mathbb{C}\setminus \mathbb{R} \rightarrow \mathbb{C}$
such that  $u\mapsto \Phi(u,z) $ is continuous over $[0,1]$ and  
$z\mapsto \Phi(u,z)$ belongs to ${\mathcal S}(\Rplus)$.\\
\item The function $f(z)=\int_0^1 \tau(u,z)\,du$ where $\tau(u,z)$ is
  defined above is the Stieltjes transform of a probability measure
  $\mathbb{P}$. Moreover, we have
$$
F^{Y_n Y_n^*} \xrightarrow[n\rightarrow \infty]{\mathcal D} \mathbb{P}\ a.s. 
$$
\end{enumerate}
\end{theo}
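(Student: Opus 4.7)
My strategy views the finite-dimensional system of Lemma~\ref{th-deter-approx-details} as a Riemann-sum discretization of~\eqref{eq-func-limit-var-profile}: under \textbf{A-\ref{hypo-limit-variance-field}}, the choice $\sigma_{ij}^2(n)=\sigma^2(i/N,j/n)$ makes $t_{i,n}(z)$ a natural discretization of the sought $\tau(i/N,z)$, with a symmetric role for an auxiliary $\tilde\tau(j/n,z)$. First I would rewrite~\eqref{eq-func-limit-var-profile} as the coupled continuous system
\[
\tau(u,z) = -\frac{1}{z\bigl(1+\int_0^1 \sigma^2(u,v)\tilde\tau(v,z)\,dv\bigr)},\qquad
\tilde\tau(v,z) = -\frac{1}{z\bigl(1+c\int_0^1 \sigma^2(x,v)\tau(x,z)\,dx\bigr)},
\]
which is the direct continuous analogue of the discrete system; the original single equation is recovered by substituting $\tilde\tau$ into the formula for $\tau$.

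For existence, define piecewise-constant extensions $\tau_n(u,z)=t_{\lceil uN\rceil,n}(z)$ and $\tilde\tau_n(v,z)=\tilde t_{\lceil vn\rceil,n}(z)$. Since each $t_{i,n}(\cdot)\in\mathcal{S}(\Rplus)$, Proposition~\ref{prop-properties-ST} yields $|t_{i,n}(z)|\le(\mathbf{d}(z,\Rplus))^{-1}$, so for each rational $u$ the family $\{\tau_n(u,\cdot)\}_n$ is a locally uniformly bounded family of analytic functions on $\C\setminus\R$, hence normal by Montel. A diagonal subsequence convergent for every rational $u$ locally uniformly in $z$ can be extracted; uniform continuity of $\sigma^2$ on $[0,1]^2$ then lets me pass to the limit in the Riemann sums that appear in the discrete system, producing $(\tau,\tilde\tau)$ solving the coupled system. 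The limit inherits membership of $\mathcal{S}(\Rplus)$ in $z$, and the right-hand side of the coupled equation forces $u\mapsto\tau(u,z)$ to be continuous.

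For uniqueness, subtracting the equations for two solutions $(\tau_i,\tilde\tau_i)$, $i=1,2$, gives
\[
\tau_1(u,z)-\tau_2(u,z) = z\,\tau_1(u,z)\tau_2(u,z)\int_0^1 \sigma^2(u,v)\bigl(\tilde\tau_1-\tilde\tau_2\bigr)(v,z)\,dv,
\]
and symmetrically for $\tilde\tau_1-\tilde\tau_2$. Plugging the second relation into the first and using $|\tau_i|,|\tilde\tau_i|\le(\mathbf{d}(z,\Rplus))^{-1}$ together with $\sigma^2\le \sigma_{\max}^2$ gives a self-majorization of the form $\sup_u|\tau_1-\tau_2|(u,z)\le K(z)\sup_u|\tau_1-\tau_2|(u,z)$ with $K(z)=c\sigma_{\max}^4|z|^2(\mathbf{d}(z,\Rplus))^{-4}<1$ for $|\Im z|$ large. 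Hence $\tau_1=\tau_2$ on a half-plane $\{|\Im z|\ge M\}$, and analyticity in $z$ extends uniqueness to all of $\C\setminus\R$. In particular the whole sequence $\tau_n$ (not merely a subsequence) converges to $\tau$.

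Part~(2) is then short: each $\tau(u,\cdot)\in\mathcal{S}(\Rplus)$ is the Stieltjes transform of some probability measure $\mu_u$ on $\Rplus$, so by continuity in $u$ the function $f(z)=\int_0^1\tau(u,z)\,du$ is the Stieltjes transform of $\mathbb{P}:=\int_0^1\mu_u\,du$, itself a probability measure on $\Rplus$. The convergence $m_n(z)=\frac{1}{N}\sum_{i=1}^N t_{i,n}(z)\to f(z)$ for each $z\in\Cplus$ is a Riemann-sum statement based on the continuity of $u\mapsto\tau(u,z)$ and the uniform convergence obtained above. Combining this with Theorem~\ref{first-order}(2), which gives $F^{Y_nY_n^*}-\pi_n\to 0$ weakly almost surely, and with Proposition~\ref{prop-properties-ST}(\ref{prop-properties-ST-convergence}) applied to $\pi_n$ versus $\mathbb{P}$, yields $F^{Y_nY_n^*}\cvgD\mathbb{P}$ a.s. The main obstacle is the uniqueness step: the contraction estimate only bites far from the real axis, so its propagation to all of $\C\setminus\R$ relies crucially on analyticity, and the passage from subsequential to full-sequence convergence of $\tau_n$ hinges on this uniqueness.
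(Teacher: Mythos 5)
The paper does not actually supply a proof of this theorem: it is stated with citations to \cite{BKV96}, \cite{Gir90} and \cite{HLN06}, and the only in-text material is a short remark indicating that the result follows from Lemma~\ref{th-deter-approx-details}-(b) combined with the convergence $\frac1N\tr T_n(z)\to f(z)$ (``one can easily show'') and Proposition~\ref{prop-properties-ST}-(\ref{prop-properties-ST-convergence}). Your proposal is a reasonable reconstruction of that sketch, and the two places where it fills in genuine content --- the construction of $(\tau,\tilde\tau)$ by compactness of the discrete system, and uniqueness via the contraction for large $|\Im z|$ extended by analyticity --- are precisely the classical techniques used in the cited references, so the approach is the intended one.

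One step is compressed to the point of being a potential gap, and it sits in the existence argument. Extracting a diagonal subsequence so that $\tau_n(u,\cdot)\to\tau(u,\cdot)$ and $\tilde\tau_n(v,\cdot)\to\tilde\tau(v,\cdot)$ locally uniformly in $z$ for every \emph{rational} $u,v$ is fine, but it is not by itself enough to pass to the limit in the Riemann sums $\frac1n\sum_j\sigma^2(u,j/n)\tilde t_{j,n}(z)$ and obtain the integral $\int_0^1\sigma^2(u,v)\tilde\tau(v,z)\,dv$. Pointwise convergence at rationals combined with the uniform bound $|\tilde t_{j,n}|\le(\mathbf{d}(z,\Rplus))^{-1}$ does not give convergence of the Riemann sums unless you also control the modulus of continuity of $v\mapsto\tilde\tau_n(v,z)$ uniformly in $n$. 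You should make explicit the discrete equicontinuity estimate
\[
\bigl|\tilde t_{j,n}(z)-\tilde t_{j',n}(z)\bigr|
\le \frac{|z|}{\mathbf{d}(z,\Rplus)^2}\cdot\frac{N}{n}\cdot
\frac{1}{\mathbf{d}(z,\Rplus)}\,
\sup_{x}\bigl|\sigma^2(x,j/n)-\sigma^2(x,j'/n)\bigr|\,,
\]
which follows by subtracting the two defining identities for $\tilde t_{j,n}$ and $\tilde t_{j',n}$ in \eqref{eq-approx-determinist-complete} and bounding the other factors by $(\mathbf{d}(z,\Rplus))^{-1}$. With uniform continuity of $\sigma^2$ on $[0,1]^2$ this yields an Arzel\`a--Ascoli argument so that convergence along the subsequence is in fact uniform on $[0,1]$ (the same idea as in Lemma~\ref{con-con}-(2), there stated only for real $z=-\rho$), after which the passage through the Riemann sums and the continuity of $u\mapsto\tau(u,z)$ in the limit are immediate. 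Once this is supplied, the remainder --- the contraction estimate on $\{|\Im z|\ge M\}$, the extension of the identity $\tau_1=\tau_2$ to all of $\C\setminus\R$ by uniqueness of analytic continuation, the passage to the full sequence, and part (2) via $m_n(z)\to f(z)$ together with Theorem~\ref{first-order}-(2) and Proposition~\ref{prop-properties-ST}-(\ref{prop-properties-ST-convergence}) --- is all correct and matches what the paper points to.
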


\begin{rem} \label{tau-tilde} If one is interested in the Stieltjes
  function related to the limit of $F^{Y^*_n Y_n}$, then one must introduce 
the following function $\tilde \tau$, which is the counterpart of $\tau$:
$$
\tilde \tau(v,z)=\left( -z + c \int_0^1 \frac{\sigma^2(t,v)}{1+\int_0^1 \sigma^2(t,s)\tilde \tau(s,z)\,ds}dt \right)^{-1}\ .
$$
Functions $\tau$ and $\tilde \tau$ are related via the following equations:
\begin{equation}\label{coupled-eq}
\tau(u,z) =\frac{-1}{z\left( 1+\int_0^1 \sigma^2(u,v) \tilde \tau(v,z) \,dv\right)}
\quad \textrm{and}\quad 
\tilde \tau(v,z) =\frac{-1}{z\left( 1 + c\int_0^1 \sigma^2(t,v)\tau(t,z)\,dt \right)}\ .
\end{equation}
\end{rem}

\begin{rem}
  We briefly indicate here how Theorems \ref{first-order} and
  \ref{prop-limit-var-profile-existence} above can be deduced from
  Lemma \ref{th-deter-approx-details}. As $\frac 1N \tr Q_n(z)$ is the
  Stieltjes transform of $F^{Y_n Y_n^*}$, Theorem
  \ref{th-deter-approx-details}--$(b)$ with $U_n = I_N$ yields $\frac
  1N \tr Q_n(z)- \frac 1N \tr T_n(z) \to 0$ almost surely. When a
  limit variance profile exists as described by 
    {\bf A-\ref{hypo-limit-variance-field}}, one can easily show 
  that $\frac 1N \tr T_n(z)$ converges to the Stieltjes
  transform $f(z)$ given by Theorem
  \ref{prop-limit-var-profile-existence} (Equation
  (\ref{eq-func-limit-var-profile}) is the ``continuous equivalent''
  of Equations (\ref{eq-systeme-approx-deter})).  Thanks to
  Proposition
  \ref{prop-properties-ST}--(\ref{prop-properties-ST-convergence}), we
  then obtain the almost sure weak convergence of $F^{Y_n Y_n^*}$ to
  $F$. In the case where {\bf A-\ref{hypo-limit-variance-field}} is not
  satisfied, one can prove similarly that $F^{Y_n Y_n^*}$ is
  approximated by $\pi_n$ as stated in Theorem \ref{first-order}-(2).
\end{rem}
\section{The Central Limit Theorem for ${\mathcal I}_n(\rho)$}
\label{sec-results}

When given a variance profile, one can consider the $t_i$'s defined in
Theorem \ref{first-order}-(1). Recall that 
$$T(z) = \diag(t_i(z), 1\le i \le N)\quad  \textrm{and}\quad  
D_j = \diag(\sigma^2_{ij}, 1 \le i \le N)\ . 
$$
We shall first define in Theorem \ref{th-variance} a non-negative real number that will play the
role of the variance in the CLT. We then state the CLT in Theorem
\ref{th-clt}. Theorem \ref{th-bias} deals with the bias term
$N(\E{\mathcal I} - V)$. 
 
\begin{theo}[Definition of the variance]
\label{th-variance}
Consider a variance profile $(\sigma_{ij})$ which fulfills assumptions 
{\bf A-\ref{hypo-variance-field}} and {\bf A-\ref{hypo-inf-trace-Dj}}
and the related $t_i$'s defined in Theorem \ref{first-order}-(1). Let $\rho>0$.  
\begin{enumerate}
\item Let $A_n=(a_{\ell, m})$ be the matrix defined by:
$$
a_{\ell,m} =\frac 1n \frac{\frac{1}{n} \tr D_{\ell} D_{m}
  T(-\rho)^2}{\left(1
    + \frac{1}{n} \tr D_{\ell} T(-\rho) \right)^2} \ ,\ 1\le \ell, m\le n\ ,
$$
then the quantity 
$
{\mathcal V}_n = - \log\det(I_n -A_n)
$
is well-defined. \\

\item Denote by 
$
{\mathcal W}_n = \tr A_n 
$
and let $\kappa$ be a real number\footnote{In the sequel, 
$\kappa$ is defined as $\kappa=\E|X_{11}|^4 -2$.} satisfying $\kappa \geq -1$.
The sequence $\left({\mathcal V}_n + \kappa {\mathcal W}_n\right)$
satisfies 
$$
0\quad  < \quad \liminf_n \left({\mathcal V}_n + \kappa {\mathcal W}_n \right)\quad \leq\quad  \limsup_n 
\left( {\mathcal V}_n + \kappa {\mathcal W}_n \right)\quad 
<\quad  \infty
$$ as $n \to \infty$ and $N/n \to c > 0$. We shall denote by:
$$
\Theta^2_n \stackrel{\triangle}{=} -\log \det(I-A_n) + \kappa \tr A_n\ .
$$ 
\end{enumerate}
\end{theo}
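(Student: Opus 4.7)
My plan is to recognize $A_n$ as similar to an explicit Hermitian positive semidefinite matrix whose spectrum lies in a compact sub-interval of $[0,1)$ uniformly in $n$, and then deduce both parts by Taylor expanding $-\log(1-x)$. Concretely, I use the fixed-point identity from Lemma \ref{th-deter-approx-details}(a), namely $1+\tfrac{1}{n}\tr D_\ell T(-\rho)=\bigl(\rho\,\tilde t_\ell(-\rho)\bigr)^{-1}$, to rewrite
\[
a_{\ell,m}=\frac{\rho^{2}\tilde t_\ell^{\,2}(-\rho)}{n}\cdot\frac{1}{n}\sum_{i=1}^{N}\sigma^{2}_{i\ell}\sigma^{2}_{im}t_i^{2}(-\rho).
\]
With $\tilde D=\diag\bigl(\rho^{2}\tilde t_\ell^{2}(-\rho)/n\bigr)$ and $W$ the $N\times n$ matrix of entries $\sigma^{2}_{i\ell}t_i(-\rho)/\sqrt n$, this reads $A_n=\tilde D\,W^{*}W$; since $\tilde D$ is positive diagonal, $A_n$ is similar to the Hermitian positive semidefinite matrix $\tilde D^{1/2}W^{*}W\tilde D^{1/2}$. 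Hence the eigenvalues of $A_n$ are real, non-negative, and equal to the squared singular values of the entry-wise non-negative $N\times n$ matrix $\Phi$ with $\Phi_{i,\ell}=\rho\,\sigma^{2}_{i\ell}t_i(-\rho)\tilde t_\ell(-\rho)/n$.

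I then localize $r(A_n)=\|\Phi\|^{2}$ inside $(0,1)$. A direct manipulation of the coupled equations of Lemma \ref{th-deter-approx-details}(a) gives the clean row- and column-sum identities
\[
\sum_{\ell=1}^{n}\Phi_{i,\ell}=1-\rho\,t_i(-\rho),\qquad \sum_{i=1}^{N}\Phi_{i,\ell}=1-\rho\,\tilde t_\ell(-\rho).
\]
Under assumption \textbf{A-\ref{hypo-variance-field}}, the fixed-point equations readily give $\rho t_i(-\rho),\,\rho\tilde t_\ell(-\rho)\ge\eta>0$ uniformly in $n$, so both the maximum row sum and the maximum column sum of $\Phi$ are at most $1-\eta$; the standard inequality $\|\Phi\|^{2}\le(\max_{i}\sum_\ell \Phi_{i,\ell})(\max_{\ell}\sum_{i}\Phi_{i,\ell})$ then yields $r(A_n)\le(1-\eta)^{2}<1$ uniformly in $n$. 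This proves part (1): $I_n-A_n$ is nonsingular with positive determinant, so ${\mathcal V}_n=\sum_i[-\log(1-\lambda_i(A_n))]\ge 0$ is well defined.

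For part (2), inserting the Taylor expansion $-\log(1-x)=x+x^{2}/2+\cdots$ into the spectral formula gives
\[
{\mathcal V}_n+\kappa\,{\mathcal W}_n=(1+\kappa)\tr A_n+\sum_{k\ge 2}\frac{\tr A_n^{k}}{k}\;\ge\;\tfrac12\tr A_n^{2}\;\ge\;\tfrac12\,r(A_n)^{2},
\]
the first term being dropped because $\kappa+1\ge 0$. The upper bound $\limsup({\mathcal V}_n+\kappa{\mathcal W}_n)<\infty$ follows from $-\log(1-x)\le x/\eta^{2}$ on $[0,1-\eta^{2}]$, giving ${\mathcal V}_n\le\eta^{-2}\|\Phi\|_{F}^{2}=O(1)$, together with $|\kappa|\tr A_n=O(1)$. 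For the strict positivity of the $\liminf$, I test $\|\Phi\|$ against the normalized all-ones vectors and invoke \textbf{A-\ref{hypo-inf-trace-Dj}}:
\[
\|\Phi\|\;\ge\;\frac{1}{\sqrt{Nn}}\sum_{\ell=1}^{n}\bigl(1-\rho\tilde t_\ell(-\rho)\bigr)\;\ge\;c\sqrt{n/N},
\]
where $1-\rho\tilde t_\ell(-\rho)\ge c>0$ follows from $\tfrac1n\sum_i\sigma^{2}_{i\ell}\ge\sigma_{\min}^{2}$ combined with the already established uniform lower bound on $t_i(-\rho)$, plugged into the defining equation for $\tilde t_\ell(-\rho)$. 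Since $N/n\to c'>0$, this yields $r(A_n)\ge C>0$ uniformly, and hence $\liminf_n({\mathcal V}_n+\kappa{\mathcal W}_n)\ge C^{2}/2>0$, covering even the endpoint $\kappa=-1$.

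The only genuinely delicate point is the uniform lower bound on $r(A_n)$, which essentially relies on \textbf{A-\ref{hypo-inf-trace-Dj}}: without the column-sum control of the variance profile, the all-ones pairing could decay to zero and the claim would fail. Everything else reduces to routine manipulations of the fixed-point equations of Lemma \ref{th-deter-approx-details} and elementary matrix analysis.
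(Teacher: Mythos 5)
Your proof is correct, and it takes a genuinely different route from the paper's. The paper proves Theorem~\ref{th-variance} as a by-product of Lemma~\ref{system-aux}, which introduces an auxiliary family of linear systems with unknowns $\bs{y_{\ell,n}^{(j)}}$ and a companion quantity $\tilde{\mathcal V}_n=\sum_j \bs{y_{j,n}^{(j)}}$; invertibility of $I_n-A_n$ and the spectral radius bound come from a Perron--Frobenius-type argument (Lemma~\ref{lm-(I-A)regular}) applied to the vector equation $u=A_n u + v$ with $u,v\succ 0$, which the paper extracts from the imaginary part of the fixed-point equations, and the two-sided bounds on $\liminf/\limsup$ are then obtained by fairly lengthy estimates on those system solutions. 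You instead exploit an algebraic structure that the paper never names: using $1+\frac1n\tr D_\ell T(-\rho)=(\rho\tilde t_\ell(-\rho))^{-1}$ you factor $A_n=\tilde D W^*W$ with $\tilde D$ positive diagonal, which exhibits $A_n$ as similar to the Hermitian PSD matrix $\Phi^*\Phi$ with $\Phi_{i\ell}=\rho\sigma^2_{i\ell}t_i(-\rho)\tilde t_\ell(-\rho)/n$. The fixed-point equations then yield the remarkably clean identities $\sum_\ell\Phi_{i\ell}=1-\rho t_i(-\rho)$ and $\sum_i\Phi_{i\ell}=1-\rho\tilde t_\ell(-\rho)$, so the upper bound $r(A_n)<1$ follows from Schur's test $\|\Phi\|^2\le\|\Phi\|_1\|\Phi\|_\infty$, the $\limsup$ bound from $-\log(1-x)\lesssim x$ on the spectrum, and the $\liminf$ bound from $-\log(1-x)\ge x+\frac{x^2}{2}$ and a Rayleigh-quotient lower bound on $\|\Phi\|$ tested against all-ones vectors, with assumption~\textbf{A-\ref{hypo-inf-trace-Dj}} entering exactly once to keep the column sums of $\Phi$ bounded below. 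Your approach is shorter, more self-contained, and makes the role of each assumption sharper; what it does not buy is the quantity $\tilde{\mathcal V}_n$ and the invertibility of the truncated matrices $I-A_n^{0,(j)}$, which the paper builds here precisely because they are reused heavily in the proof of the CLT (Theorem~\ref{th-clt}), so the two approaches are complementary rather than interchangeable within the article as a whole.
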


Proof of Theorem \ref{th-variance} is postponed to Section \ref{proof-variance}.

In the sequel and for obvious reasons, we shall refer to matrix $A_n$
as the {\bf variance matrix}. In order to study the CLT for
$N({\mathcal I}_n(\rho)-V_n(\rho))$, we decompose it into a random
term from which the fluctuations arise:
$$
\displaystyle{N \left( {\mathcal I}_n(\rho) - \E {\mathcal I}_n(\rho) \right)} 
=  
\displaystyle{\log \det(Y_n Y_n^* + \rho I_N) -
\E\log \det(Y_n Y_n^* + \rho I_N )} \ ,
$$ 
and into a deterministic one which yields to a bias in the CLT:
$$
\displaystyle{N \left( \E {\mathcal I}_n(\rho) - V_n(\rho) \right)} 
=  
\displaystyle{\E\log \det(Y_n Y_n^* + \rho I_N) - 
N \int \log(\lambda+\rho)\pi_n(\,d\lambda)} \ .
$$
We can now state the CLT.
\begin{theo}[The CLT]
\label{th-clt}
Consider the family of random matrices $(Y_n Y_n^*)$ and assume that
{\bf A-\ref{hypo-moments-X}}, {\bf A-\ref{hypo-variance-field}} and
{\bf A-\ref{hypo-inf-trace-Dj}} hold true. Let $\rho>0$, let
$\kappa=\E|X_{11}|^4 -2$, and let $\Theta_n^2$ be given by Theorem
\ref{th-variance}. Then
$$
\Theta_n^{-1} 
\Big(\log \det(Y_n Y_n^* + \rho I_N) - \E\log \det(Y_n
  Y_n^* + \rho I_N ) \Big) \quad \xrightarrow[n\rightarrow \infty,\
\frac Nn \rightarrow c]{\mathcal D}\quad {\mathcal N}(0,1)\ . 
$$
\end{theo}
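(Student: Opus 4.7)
The strategy is Girko's REFORM method combined with a martingale CLT in the spirit of Bai and Silverstein~\cite{BaiSil04}. Let $y_j$ denote the $j$-th column of $Y_n$, $Y_{(j)}$ denote $Y_n$ with column $j$ deleted, and $Q_{(j)}:=(Y_{(j)}Y_{(j)}^*+\rho I_N)^{-1}$. The Sherman--Morrison identity yields $\log\det(Y_nY_n^*+\rho I_N)=\log\det(Y_{(j)}Y_{(j)}^*+\rho I_N)+\log(1+\beta_j)$ with $\beta_j:=y_j^*Q_{(j)}y_j$. With the filtration $\mathcal F_j:=\sigma(y_1,\dots,y_j)$ and $\E_j:=\E[\,\cdot\mid\mathcal F_j]$, the independence of $Y_{(j)}$ from $y_j$ gives $(\E_j-\E_{j-1})\log\det(Y_{(j)}Y_{(j)}^*+\rho I_N)=0$, hence the martingale decomposition
\[
\log\det(Y_nY_n^*+\rho I_N)-\E\log\det(Y_nY_n^*+\rho I_N)=\sum_{j=1}^n \xi_j,\quad \xi_j:=(\E_j-\E_{j-1})\log(1+\beta_j).
\]
The theorem then follows from the martingale CLT once we verify (i) a Lindeberg-type moment bound $\sum_j \E|\xi_j|^4\to 0$, and (ii) the convergence in probability $\sum_j \E[\xi_j^2\mid\mathcal F_{j-1}]-\Theta_n^2\to 0$.

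For (i), the resolvent bound $\|Q_{(j)}\|\le\rho^{-1}$ and the finite eighth moment from~{\bf A-\ref{hypo-moments-X}} yield $\E|\xi_j|^4=O(n^{-2})$, so the sum vanishes. For (ii), I would linearise $\log(1+\beta_j)$. Set $c_j:=\tfrac1n\tr D_j T(-\rho)$ (deterministic) and $\alpha_j:=\E[\beta_j\mid Y_{(j)}]=\tfrac1n\tr D_j Q_{(j)}$. Since $\alpha_j$ does not depend on $y_j$, $(\E_j-\E_{j-1})\log(1+\alpha_j)=0$, and a Taylor expansion of $\log(1+\gamma_j)$ with $\gamma_j:=(\beta_j-\alpha_j)/(1+\alpha_j)$ gives to leading order
\[
\xi_j\ \approx\ \frac{y_j^*\bar Q_{(j)}y_j-\tfrac1n\tr D_j \bar Q_{(j)}}{1+c_j},\qquad \bar Q_{(j)}:=\E_{j-1}Q_{(j)},
\]
a conditionally centred quadratic form in $y_j$ given $\mathcal F_{j-1}$. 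The higher-order Taylor terms and the difference $\alpha_j-c_j$ in the denominator have to be carried along and shown to contribute negligibly to the quadratic variation, through repeated applications of Lemma~\ref{th-deter-approx-details} and of the resolvent bound.

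The heart of the proof is the computation of $\sum_j \E[\xi_j^2\mid\mathcal F_{j-1}]$ and its identification with $\Theta_n^2=-\log\det(I-A_n)+\kappa\tr A_n$. Because $\E(X_{ij})^2=0$, the conditional variance of a quadratic form splits as
\[
\mathrm{Var}(y_j^*M y_j\mid\mathcal F_{j-1})=\tfrac{1}{n^2}\tr(D_j M D_j M)+\tfrac{\kappa}{n^2}\sum_i\sigma_{ij}^4 M_{ii}^2
\]
for Hermitian $\mathcal F_{j-1}$-measurable $M$. Applied with $M=\bar Q_{(j)}$, the $\kappa$ piece, after replacing $[\bar Q_{(j)}]_{ii}$ by $t_i(-\rho)$ via Lemma~\ref{th-deter-approx-details}, reconstructs exactly $\kappa\tr A_n$ upon summation over $j$. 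The first piece is the delicate one: $\bar Q_{(j)}$ is close to $T(-\rho)$ only in the trace sense and not entrywise, so a naive pointwise substitution recovers only $\tr A_n$ and misses $-\log\det(I-A_n)-\tr A_n=\sum_{k\ge 2}\tr A_n^k/k$. The main obstacle is therefore to perform a second-level resolvent expansion of $\bar Q_{(j)}$ around $T(-\rho)$, exploit the self-consistency of system~\eqref{eq-systeme-approx-deter} for the $t_i$'s, and resum the resulting double sums into the geometric-type series $\sum_{k\ge 1}\tfrac{1}{k}\tr A_n^k=-\log\det(I-A_n)$. The spectral-radius control $r(A_n)<1$ implicit in Theorem~\ref{th-variance} is precisely what makes this resummation convergent and produces the $\log\det$ structure of the limiting variance.
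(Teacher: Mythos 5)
Your high-level architecture matches the paper's: write $\log\det(Y_nY_n^*+\rho I_N)-\E\log\det$ as a sum of martingale differences coming from column-by-column conditioning, verify a Lyapunov/Lindeberg condition using the eighth moment, and identify the limit of the conditional quadratic variation with $\Theta_n^2$. Your filtration choice $\mathcal F_j=\sigma(y_1,\dots,y_j)$ is the time-reversal of the paper's $\mathcal F_j=\sigma(y_j,\dots,y_n)$; both are legitimate, the one-sidedness being the essential feature. You also correctly isolate the real difficulty: a pointwise replacement of $\E_{j-1}Q_{(j)}$ by $T(-\rho)$ in the $\tr(D_jMD_jM)$ term reproduces only $\tr A_n$ and loses the higher Neumann terms.

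The gap is precisely in the step you label ``the heart of the proof''. You assert that a ``second-level resolvent expansion'' can be ``resummed into $\sum_{k\ge1}\tfrac1k\tr A_n^k=-\log\det(I-A_n)$'', but give no mechanism producing the coefficient $1/k$ (a naive resummation of a resolvent expansion produces $\tr\bigl(A_n(I-A_n)^{-1}\bigr)=\sum_k\tr A_n^k$, \emph{without} the $1/k$). In the paper, the $\log\det$ structure is not obtained by direct resummation: one introduces $\bs\chi^{(j)}_{\ell,n}=\tfrac1n\tr D_\ell(\E_{j+1}Q)D_jQ$, shows it satisfies an approximate linear system whose deterministic version is $\bs{y}^{(j)}_n=A_n^{0,(j)}\bs{y}^{(j)}_n+\bs{d}^{(j)}_n$ (Lemma~\ref{system-aux}), and then proves $\sum_j\bs{y}^{(j)}_{j,n}=-\log\det(I-A_n)+o(1)$ by a telescoping block-determinant identity (Lemma~\ref{system-aux}-(3)). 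The one-sidedness (recursion only over $m>j$) is what forces every closed path in the Neumann series to be counted once through its minimal index, which is where the $1/k$ comes from; the determinant telescoping is a clean way to package this combinatorics. Until you supply a concrete argument that plays the role of this linear system plus telescoping identity --- and a control of the error terms comparable to the paper's $\bs\varepsilon_{3,\ell j}$, $\bs\varepsilon_{41,j}$, $\bs\varepsilon_{42,j}$ estimates --- the central identification of the variance remains unproved.
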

Proof of Theorem \ref{th-clt} is postponed to Section \ref{proof-clt}. 

\begin{rem}
  In the case where the entries $X_{ij}$ are complex Gaussian ({\em
    i.e.} with independent normal real and imaginary parts, each of
  them centered with variance $2^{-1}$) then $\kappa=0$ and
  $\Theta_n^2$ reduces to the term ${\mathcal V}_n$. 
\end{rem}
The asymptotic bias is described in the following theorem: 
\begin{theo}[The bias]
\label{th-bias} 
Assume that the setting of Theorem \ref{th-clt} holds true. Then
\begin{enumerate}
\item
\label{th-bias-unique-solution} 
For every $\omega \in [\rho, +\infty)$, 
the system of $n$ linear equations with unknown parameters 
$({\bs w}_{\ell,n}(\omega); 1 \leq \ell \leq n)$: 
\begin{equation} 
\label{eq-def-w-bias} 
{\bs w}_{\ell,n}(\omega) = 
\frac 1n \sum_{m=1}^n 
\frac{ \frac 1n \tr D_{\ell} D_m T(-\omega)^2 }
{(1 + \frac 1n \tr D_{\ell} T(-\omega))^2} 
{\bs w}_{m,n}(\omega) + {\bs p}_{\ell,n}(\omega), \quad 1\le \ell \le n  
\end{equation} 
with
\begin{equation}
\label{eq-p(omega)} 
{\bs p}_{\ell,n}(\omega) = \kappa \ \omega^2 \ti t_\ell(-\omega)^2 
\left( 
\frac{\omega}n \sum_{i=1}^N \left( \frac{\sigma^2_{i\ell} t_i(-\omega)^3 }n
\tr \ti D_i^2 \ti T(-\omega)^2 \right) 
\ - \ 
\frac{\ti t_\ell(-\omega)}n \tr D_\ell^2 T(-\omega)^2 
\right) 
\end{equation} 
admits a unique solution for $n$ large enough. In particular if $\kappa=0$, then ${\bs p}_{\ell,n}=0$ 
and ${\bs w}_{\ell,n}=0$.
\item 
\label{th-bias-expression-bias} 
Let 
\begin{equation}
\label{eq-def-beta} 
\beta_n(\omega) = \frac 1n \sum_{\ell=1}^n {\bs w}_{\ell,n}(\omega) \ . 
\end{equation} 
Then ${\mathcal B}_n(\rho) \stackrel{\triangle}{=} \int_{\rho}^\infty \beta_n(\omega)\, d\omega$ is well-defined, moreover,
\begin{equation}
\label{eq-integrability-beta} 
\limsup_n 
\int_\rho^\infty \left| \beta_n(\omega) \right| d\omega \ < \ \infty.
\end{equation}
Furthermore, 
\begin{equation}
\label{eq-result-bias} 
N \left( \E {\mathcal I}_n(\rho) - V_n(\rho) \right)
- {\mathcal B}_n(\rho)
\quad \xrightarrow[n\rightarrow \infty,\ \frac Nn \rightarrow c]{\ } \quad 
0 \ .  
\end{equation} 
\end{enumerate} 
\end{theo}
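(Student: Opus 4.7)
The strategy rests on a differentiation identity in $\rho$ that reduces the bias to an integral of $\E\tr Q_n(-\omega) - \tr T_n(-\omega)$.

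For part (\ref{th-bias-unique-solution}), I would first rewrite the system (\ref{eq-def-w-bias}) in matrix form as $(I_n - A_n(\omega))\bs{w}_n(\omega) = \bs{p}_n(\omega)$, where $A_n(\omega)$ is defined as in Theorem \ref{th-variance} but with $\rho$ replaced by $\omega$. The arguments leading to the well-definedness of $-\log\det(I_n - A_n)$ in the proof of Theorem \ref{th-variance}-(1) yield $r(A_n(\omega)) < 1$ for every $\omega \geq \rho$ and $n$ large enough, hence invertibility of $I_n - A_n(\omega)$ and uniqueness of $\bs{w}_n(\omega)$.

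For the integrability statement (\ref{eq-integrability-beta}) in part (\ref{th-bias-expression-bias}), I would carry out an asymptotic analysis as $\omega \to \infty$. Standard bounds on Stieltjes transforms of compactly supported measures give $t_i(-\omega), \ti t_j(-\omega) = O(\omega^{-1})$ uniformly in $i,j,n$. Substituting into (\ref{eq-p(omega)}) shows $\|\bs{p}_n(\omega)\|_\infty = O(\omega^{-2})$; since the entries of $A_n(\omega)$ themselves decay like $\omega^{-2}$, the operator $(I_n - A_n(\omega))^{-1}$ is uniformly bounded and one concludes $|\beta_n(\omega)| = O(\omega^{-2})$ uniformly in $n$.

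For the main assertion (\ref{eq-result-bias}), the starting point is the identity, valid by $\frac{d}{d\omega}\log\det(Y_n Y_n^* + \omega I_N) = \tr Q_n(-\omega)$ and $\frac{d}{d\omega} V_n(\omega) = m_n(-\omega) = \frac 1 N \tr T_n(-\omega)$ from Theorem \ref{first-order}-(1),
\begin{equation*}
N\bigl(\E\mathcal{I}_n(\rho) - V_n(\rho)\bigr)
= -\int_\rho^\infty \bigl[\E\tr Q_n(-\omega) - \tr T_n(-\omega)\bigr]\,d\omega,
\end{equation*}
after verifying (by expanding the logarithm) that $N\E\mathcal{I}_n(\omega) - NV_n(\omega) \to 0$ as $\omega \to \infty$. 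It thus suffices to prove that
\begin{equation*}
\E\tr Q_n(-\omega) - \tr T_n(-\omega) = -\beta_n(\omega) + \varepsilon_n(\omega)
\end{equation*}
with $\int_\rho^\infty |\varepsilon_n(\omega)|\,d\omega \to 0$. To establish this, I would expand each $\E q_{ii}(-\omega) - t_i(-\omega)$ via the Schur complement formula applied to the $i$-th row of $Y_n$, and then use Stein-type integration-by-parts identities for the complex entries $X_{ij}$. The second-moment contribution reconstructs the operator $A_n(\omega)$ acting on the vector of unknowns, while the fourth-cumulant contribution produces precisely the term $\bs{p}_n(\omega)$, vanishing when $\kappa = 0$. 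Inverting $(I_n - A_n(\omega))$ and summing, the vector so obtained matches $\bs{w}_n(\omega)$ up to integrable errors, giving $\beta_n(\omega)$ after averaging.

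The main obstacle is the moment expansion of the resolvent: isolating the fourth-cumulant term with the precise prefactor $\kappa$ appearing in (\ref{eq-p(omega)}), controlling fifth- and higher-order cumulant terms using only the finite $8^{\textrm{th}}$ moment assumption {\bf A-\ref{hypo-moments-X}}, and obtaining uniform-in-$\omega$ bounds on the error $\varepsilon_n(\omega)$ sharp enough to remain integrable on $[\rho,\infty)$. In particular, the absence of Gaussianity forces one to rely on the diagonal control of the resolvent entries $q_{ii}$ provided by Lemma \ref{th-deter-approx-details}-(b) applied with suitable diagonal $U_n$, rather than on direct Gaussian integration by parts.
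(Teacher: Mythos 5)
Your outline for part (\ref{th-bias-unique-solution}) and for the integral representation of the bias matches the paper, and your dominated-convergence strategy for \eqref{eq-integrability-beta} is sound (the paper actually obtains the sharper bound $|\beta_n(\omega)|\le K'\omega^{-3}$, but $O(\omega^{-2})$ already suffices). The gap is in the core of \eqref{eq-result-bias}: the mechanism by which the fourth cumulant $\kappa$ is extracted and the perturbed linear system is established.

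First, you propose to control the diagonal resolvent entries $\E q_{ii}-t_i$ via Lemma \ref{th-deter-approx-details}-(b), but that lemma only yields $\frac 1N \tr\,U(Q_n-T_n)\to 0$ almost surely, with no rate. The bias is an $O(1/n)$ correction: you need to isolate the next term in the expansion of $\E q_{ii}-t_i$ (equivalently $\E\ti q_{jj}-\ti t_j$), i.e.\ to show that $n(\ti t_j-\E[\ti Q]_{jj})$ converges. A statement of a.s.\ first-order convergence cannot deliver this. Second, the proposal to use \emph{Stein-type integration by parts} is the Gaussian device; for entries satisfying only {\bf A-\ref{hypo-moments-X}} you would be forced into a cumulant expansion truncated at the fourth cumulant, whose higher-order remainder then has to be controlled under an $8^{\mathrm{th}}$-moment hypothesis only — precisely the complication the paper is designed to avoid. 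The tool the paper actually uses is the \emph{exact} quadratic-form variance identity of Lemma \ref{lm-approx-quadra}-(\ref{identite-cumu}), $\E(\bs{x}^*M\bs{x}-\tr M)^2=\tr M^2+\kappa\sum_i m_{ii}^2$, which cleanly produces the coefficient $\kappa$ with no higher cumulants to control. This identity, applied inside a chain of rank-one perturbation expansions (decompositions $\bs{\psi}_1,\ldots,\bs{\psi}_9$) built from \eqref{inversion-lemma}, \eqref{eq-qii}, \eqref{eq-QiiX}, \eqref{eq-identite-tildeQX} and the estimates of Lemma \ref{lm-approximations-EQ-C-B}, is what generates both the matrix $A_n(\omega)$ acting on $\bs{\varphi}_j(\omega)=n(\ti t_j-\E[\ti Q]_{jj})$ and the source term $\bs{p}_n(\omega)$ with the correct prefactor. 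Finally, the paper works on the $\ti Q$ side after converting via $\tr(T-Q)=\tr(\ti T-\ti Q)$ (Appendix \ref{anx-prf-Q-Qtilde}), which lets it reuse the matrices $A_n$ and the max-row-norm machinery already developed for the variance in Section \ref{proof-variance}; sticking with the $Q$ side as you do would require rederiving that machinery in mirrored form. None of this is fatal in principle, but your proposal as written lacks the one ingredient — the quadratic-form cumulant identity — that makes the argument go through without Gaussianity or all-moments assumptions.
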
 
Proof of Theorem \ref{th-bias} is postponed to Section \ref{sec-proof-bias}. 

\section{The CLT for a limiting variance profile}
\label{section-limiting}
In this section, we shall assume that Assumption
{\bf A-\ref{hypo-limit-variance-field}} holds, {\em i.e.}
$\sigma^2_{ij}(n)=\sigma^2(i/N,j/n)$ for some continuous nonnegative
function $\sigma^2(x,y)$.  Recall the definitions
\eqref{eq-func-limit-var-profile} of function $\tau$ and of the $t_i$'s 
(defined in Theorem \ref{first-order}-(1)). In the sequel,
we take $\rho>0$, $z=-\rho$ and denote
$\tau(t)\stackrel{\triangle}{=}\tau(t,-\rho)$.
We first gather convergence results relating the $t_i$'s 
and $\tau$.

\begin{lemma}\label{con-con}
  Consider a variance profile $(\sigma_{ij})$ which fulfills
  assumption {\bf A-\ref{hypo-limit-variance-field}}. Recall the
  definitions of the $t_i$'s and $\tau$. Let $\rho>0$ and let $z=-\rho$ be fixed.
  Then, the following convergences hold true:
\begin{enumerate}
\item 
$
\frac 1N \sum_{i=1}^N t_i \delta_{\frac iN} \xrightarrow[n\rightarrow \infty]{w} \tau(u)\,du\ ,
$
where $\xrightarrow[]{w}$ stands for the weak convergence of measures.\\
\item  
$
\sup_{i\le N} \left| t_i -\tau(i/N) \right| \xrightarrow[n\rightarrow \infty]{} 0\ .
$ \\
\item 
$
\frac 1N \sum_{i=1}^N t^2_i \delta_{\frac iN} \xrightarrow[n\rightarrow \infty]{w} \tau^2(u)\,du\ ,
$
\end{enumerate}
\end{lemma}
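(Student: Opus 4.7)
The substantive work is in part (2); once the uniform convergence $\sup_{i\le N}|t_i - \tau(i/N)| \to 0$ is in hand, parts (1) and (3) are routine. Indeed, for any continuous $\varphi$ on $[0,1]$,
$$
\int\!\varphi\,d\!\left(\tfrac 1N\sum_i t_i\delta_{i/N}\right)
= \tfrac 1N\sum_i \varphi(i/N)\tau(i/N) + o(1)
\xrightarrow[n\to\infty]{} \int_0^1 \varphi(u)\tau(u)\,du,
$$
using the continuity of $\tau(\cdot,-\rho)$ guaranteed by Theorem \ref{prop-limit-var-profile-existence} and standard Riemann-sum convergence. Part (3) is identical, with the additional observation $|t_i^2 - \tau(i/N)^2| \le 2\rho^{-1}|t_i-\tau(i/N)|$.

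To prove (2), the plan is a compactness argument. First, write $t_i = (\rho + f_i)^{-1}$ with $f_i = \tfrac 1n\sum_j \sigma^2_{ij}/h_j$ and $h_j = 1 + \tfrac 1n\sum_\ell \sigma^2_{\ell j}t_\ell$; since $h_j\ge 1$ and $\sigma^2\le \sigma_{\max}^2$, we obtain the uniform bounds $(\rho+\sigma_{\max}^2)^{-1}\le t_i\le \rho^{-1}$. Next, since the dependence of $\sigma^2_{ij}$ on $i$ is only through $i/N$ while $h_j$ does not depend on $i$,
$$
|f_i-f_{i'}| \le \tfrac 1n\sum_j \frac{|\sigma^2(i/N,j/n)-\sigma^2(i'/N,j/n)|}{h_j} \le \omega_{\sigma^2}(|i-i'|/N),
$$
where $\omega_{\sigma^2}$ is the modulus of continuity of $\sigma^2$ on $[0,1]^2$. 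Hence $|t_i-t_{i'}|\le \rho^{-2}\omega_{\sigma^2}(|i-i'|/N)$, so the piecewise-constant extensions $\hat t_n(u):=t_{\lceil uN\rceil}(-\rho)$ form a uniformly bounded, uniformly equicontinuous family on $[0,1]$.

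By Arzel\`a--Ascoli, every subsequence of $(\hat t_n)$ admits a further uniformly convergent subsequence with continuous limit $\hat\tau$. The next step is to pass to the limit in the defining fixed-point equation for $t_i$: uniform convergence $\hat t_n\to\hat\tau$, uniform continuity of $\sigma^2$, Riemann-sum convergence of $\tfrac 1N\sum_\ell$, and $N/n\to c$ together yield
$$
\hat\tau(u) = \Big(\rho + \int_0^1 \frac{\sigma^2(u,v)}{1+c\int_0^1 \sigma^2(x,v)\hat\tau(x)\,dx}\,dv\Big)^{-1},
$$
which is exactly equation (\ref{eq-func-limit-var-profile}) at $z=-\rho$. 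It then remains to identify $\hat\tau$ with $\tau(\cdot,-\rho)$. The most reliable route is to run the compactness argument jointly at every $z$ in a countable dense subset of $\Cplus$ (where the Stieltjes bound $|t_i(z)|\le 1/|\im z|$ provides uniform boundedness and $\im t_i(z)>0$ passes to the limit), invoke the uniqueness in Theorem \ref{prop-limit-var-profile-existence} within the Stieltjes class, and then transfer back to $z=-\rho$ by analytic continuation across $\C\setminus\Rplus$. Since every subsequence yields the same limit, the full sequence $\hat t_n$ converges uniformly to $\tau(\cdot,-\rho)$, which is (2).

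The main obstacle will be the final identification step: a naive fixed-point contraction in the real equation succeeds only when $\sigma_{\max}^4 c/\rho^2<1$, so for arbitrary $\rho>0$ the Stieltjes-analytic structure is essential, and Theorem \ref{prop-limit-var-profile-existence} is invoked precisely to supply uniqueness. The remaining ingredients --- uniform continuity of $\sigma^2$, Riemann-sum control, and the continuity of $\tau$ --- are routine bookkeeping.
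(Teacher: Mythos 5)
Your proposal takes a genuinely different route from the paper's. The paper proves item (1) first by invoking the first-order results (Lemma~\ref{th-deter-approx-details}-(b) together with Theorem~2.3-(3) of \cite{HLN06}), and then derives the uniform estimate (2) from (1) via a tensor-product density argument: approximating $\sigma^2(x,y)$ uniformly by finite sums $\sum_\ell g_\ell(x)h_\ell(y)$, so that the $x$-sums $\frac 1N\sum_i g_\ell(i/N)t_i$ decouple from the $y$-variable and weak convergence of $\frac 1N\sum t_i\delta_{i/N}$ immediately gives convergence that is uniform in $j$. Your route reverses the order: you establish (2) by Arzel\`a--Ascoli and identify the limit through uniqueness, then derive (1) and (3) as corollaries. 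Both are defensible, but they pay for different things. The paper's approach avoids compactness entirely and never has to identify a limit against the fixed-point equation, at the price of needing the first-order machinery from elsewhere; your approach is self-contained in spirit but shifts the difficulty to the identification step.

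That identification step is where your proposal has a genuine gap, and you are right to flag it. As you note, at $z=-\rho$ the fixed-point map is not a contraction for general $\rho$, so uniqueness at a single real $z$ is not available; you must run the normal-families argument jointly in $z\in\Cplus$ and $u\in[0,1]$, identify the subsequential limit with $\tau$ on $\Cplus$ via Theorem~\ref{prop-limit-var-profile-existence}, and then transfer to $z=-\rho$ by analyticity. Two ingredients of that plan are not supplied. First, to invoke the uniqueness of Theorem~\ref{prop-limit-var-profile-existence} you need the subsequential limit $u\mapsto\hat\tau(u,\cdot)$ to lie in $\mathcal S(\Rplus)$, i.e.\ to be the Stieltjes transform of a \emph{probability} measure; a locally uniform limit of Stieltjes transforms of probability measures is a priori only the transform of a sub-probability measure, so you need a tightness argument (e.g.\ from $-z t_i(z)\to 1$ as $z\to\infty$ along the imaginary axis, uniformly in $i$ and $n$) that you do not address. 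Second, the equicontinuity-in-$u$ estimate you prove only at $z=-\rho$, using the lower bound $\rho+f_i\ge\rho$; the corresponding bound on $\Cplus$ (needed for the joint normal-families argument) is $|{-z}+f_i(z)|\ge\im z$, and you would need to repeat the modulus-of-continuity computation in that regime. Neither point is fatal, but as written the proposal records a plan rather than a proof for step (2), whereas in the paper this difficulty simply does not arise.
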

\begin{proof}
The first item of the lemma follows from Lemma \ref{th-deter-approx-details}-(b) together with Theorem 2.3-(3) 
in \cite{HLN06}.

In order to prove item (2), one has to compute 
\begin{multline*}
t_i-\tau(i/N)=  \left(
\rho  + \frac 1n \sum_{j=1}^n 
\frac{\sigma^2(i/N,j/n)}
{1 + \frac{1}{n} \sum_{\ell=1}^N \sigma^2(\ell/N,j/n) t_{\ell}} \right)^{-1}\\
- \left( \rho +\int_0^1 \frac{\sigma^2(u,v)}{1+c\int_0^1 \sigma^2(x,v)\tau(x)\,dx}dv \right)^{-1}
\end{multline*}
and use the convergence proved in the first part of the lemma. In
order to prove the uniformity over $i\le N$, one may recall that
$C[0,1]^2=C[0,1]\otimes C[0,1]$ which in particular implies that
$\forall \varepsilon>0$, there exist $g_{\ell}$ and $h_{\ell}$ such
that $\sup_{x,y} |\sigma^2(x,y) -\sum_{\ell=1}^L g_{\ell}(x)
h_{\ell}(y)|\le \varepsilon$. Details are left to the reader.

The convergence stated in item (3) is a direct consequence of item (2).
\end{proof}

\subsection{A continuous kernel and its Fredholm determinant}
Let $K:[0,1]^2 \rightarrow \R$ be some non-negative continuous
function we shall refer to as a kernel.  Consider the associated
operator (similarly denoted with a slight abuse of notations):
\begin{eqnarray*}
K: C[0,1] & \rightarrow & C[0,1] \\
f & \mapsto & K f(x)= \int_{[0,1]} K(x,y)f(y)\, dy\ .
\end{eqnarray*}
Then one can define (see for instance \cite[Theorem 5.3.1]{Smi58}) 
the Fredholm determinant $ \det(1 +\lambda
K) $, where $1: f\mapsto f$ is the identity operator, as 
\begin{equation}\label{def-fredholm}
\det (1 -\lambda K) =\sum_{k=0}^\infty \frac {(-1)^k \lambda^k}{k!} \int_{[0,1]^k} 
K
\left( 
\begin{array}{ccc}
x_1 & \cdots & x_k \\
x_1 & \cdots & x_k 
\end{array}\right) \otimes_{i=1}^k d\,x_i
\end{equation}
where 
$$
K
\left( 
\begin{array}{ccc}
x_1 & \cdots & x_k \\
y_1 & \cdots & y_k 
\end{array}\right) = \det ( K(x_i, y_j),\ 1\le i,j\le n)\ ,
$$
for every $\lambda\in \mathbb{C}$. One can define the trace of the iterated kernel as:
$$
\tr K^k =\int_{[0,1]^k} K(x_1,x_2)\cdots K(x_{k-1},x_k)K(x_k,x_1) dx_1\cdots dx_k 
$$

In the sequel, we shall focus on the following kernel:
\begin{equation}\label{kernel}
K_{\infty}(x,y)=\frac{c \int_{[0,1]} \sigma^2(u,x) \sigma^2(u,y) \tau^2(u)\, du}
{\left( 1+c \int_{[0,1]} \sigma^2(u,x) \tau(u) \,du\right)^2}\ .
\end{equation}

\begin{theo}[The variance]\label{lem-limiting-variance}
Assume that assumptions {\bf A-\ref{hypo-moments-X}} and
{\bf A-\ref{hypo-limit-variance-field}} hold. Let $\rho>0$ and recall the definition of matrix $A_n$:
$$
a_{\ell,m}= \frac 1n \frac{\frac{1}{n} \sum_{i=1}^N \sigma^2\left(\frac iN, \frac{\ell}n\right)
\sigma^2\left(\frac iN, \frac m n\right) t_i^2}{\left(1
    + \frac{1}{n} \sum_{i=1}^N  \sigma^2\left(\frac iN, \frac{\ell} n\right) t_i \right)^2} \ ,\ 1\le \ell, m\le n\ .
$$
Then:
\begin{enumerate}
\item $ \tr A_n \xrightarrow[n\rightarrow \infty]{} \tr K_{\infty}\ .
  $\\
\item $ \det (I_n -A_n) \xrightarrow[n\rightarrow \infty]{}
  \det(1-K_{\infty})
  $  and $\det (1-K_{\infty})\neq 0$. \\
\item Let $\kappa=\E |X_{11}|^4 -2 $, then 
$$
0< -\log \det(1-K_{\infty}) +\kappa \tr K_{\infty} <\infty\ .
$$
\end{enumerate}
\end{theo}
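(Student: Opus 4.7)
The plan is to interpret the matrix $A_n$ as a discretization on the grid $\{(\ell/n,m/n)\}$ of a step-function kernel on $[0,1]^2$ that converges uniformly to $K_\infty$, and then to reduce the three statements to elementary convergence of Riemann sums and of the Fredholm determinant series. Concretely, for $x,y\in(0,1]$ introduce
\[
K_n(x,y) \;=\; \frac{\tfrac{1}{n}\sum_{i=1}^N \sigma^2\!\bigl(\tfrac{i}{N},\tfrac{\lceil nx\rceil}{n}\bigr)\sigma^2\!\bigl(\tfrac{i}{N},\tfrac{\lceil ny\rceil}{n}\bigr)\,t_i^2}{\bigl(1+\tfrac{1}{n}\sum_{i=1}^N \sigma^2\!\bigl(\tfrac{i}{N},\tfrac{\lceil nx\rceil}{n}\bigr)\,t_i\bigr)^2}\,,
\]
so that $n\,a_{\ell,m}=K_n(\ell/n,m/n)$. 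Using $N/n\to c$ together with the uniform approximation $\sup_i|t_i-\tau(i/N)|\to 0$ from Lemma \ref{con-con}-(2), the weak convergence from Lemma \ref{con-con}-(3), uniform continuity of $\sigma^2$, and the tensor-product density argument already invoked in the proof of Lemma \ref{con-con}-(2), I would first show that $K_n\to K_\infty$ uniformly on $[0,1]^2$. In particular $M:=\sup_n\|K_n\|_\infty<\infty$ and $K_\infty$ is continuous.

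For (1), the identity $\tr A_n=\frac{1}{n}\sum_{\ell=1}^n K_n(\ell/n,\ell/n)$ is a Riemann sum for $\int_0^1 K_\infty(x,x)\,dx=\tr K_\infty$, and uniform convergence of $K_n$ yields the claim. For (2), I would employ the Leibniz-type expansion
\[
\det(I_n-A_n)=\sum_{k=0}^{n}\frac{(-1)^k}{k!\,n^k}\sum_{(i_1,\ldots,i_k)\in[n]^k}\det\bigl[K_n(i_p/n,i_q/n)\bigr]_{p,q=1}^{k},
\]
valid because the inner determinant vanishes whenever two indices coincide. For each fixed $k$, uniform convergence $K_n\to K_\infty$ combined with continuity of the determinant in its entries turns the $k$-th summand into a Riemann sum converging to $\tfrac{(-1)^k}{k!}\int_{[0,1]^k}\det[K_\infty(x_p,x_q)]\,dx_1\cdots dx_k$. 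Hadamard's inequality gives the uniform termwise bound $|\det[K_n(i_p/n,i_q/n)]|\le k^{k/2}M^k$, so the $k$-th term of the series is dominated by $k^{k/2}M^k/k!$, which is summable in $k$ by Stirling; Tannery's theorem (dominated convergence for series) then yields $\det(I_n-A_n)\to\det(1-K_\infty)$. Non-vanishing of the limit is a direct consequence of Theorem \ref{th-variance}-(2): since $\mathcal{V}_n=-\log\det(I_n-A_n)$ is bounded above, the limit satisfies $\det(1-K_\infty)\ge e^{-\limsup_n\mathcal{V}_n}>0$.

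For (3), finiteness of $-\log\det(1-K_\infty)+\kappa\tr K_\infty$ follows from (1), (2), and continuity of $K_\infty$. Strict positivity is obtained by combining (1), (2) with Theorem \ref{th-variance}-(2): parts (1) and (2) give $\mathcal{V}_n+\kappa\mathcal{W}_n\to -\log\det(1-K_\infty)+\kappa\tr K_\infty$, while Theorem \ref{th-variance}-(2) guarantees $\liminf_n(\mathcal{V}_n+\kappa\mathcal{W}_n)>0$. The main technical obstacle I anticipate is the uniform convergence $K_n\to K_\infty$ in the first step: the presence of the denominator evaluated at $\lceil nx\rceil/n$ (a variable that also ranges over $[0,1]$) requires uniformity in that second index, which is not immediate from Lemma \ref{con-con}-(1) or (3) alone and necessitates, as in part~(2) of that lemma, approximating $\sigma^2(u,v)$ by finite sums $\sum_k g_k(u)h_k(v)$. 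Once uniform convergence is secured, the remainder of the proof is straightforward Riemann-sum asymptotics together with the Hadamard bound.
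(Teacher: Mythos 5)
Your proposal is correct and follows essentially the same route as the paper's proof: discretize $A_n$ via a kernel $K_n$ on $[0,1]^2$, establish its uniform convergence to $K_\infty$, identify $\tr A_n$ as a Riemann sum, expand $\det(I_n-A_n)$ in the Leibniz/Fredholm form, control it termwise with Hadamard's inequality, pass to the limit by dominated convergence for series (your Tannery's theorem is the paper's ``dominated convergence argument''), and deduce non-vanishing and strict positivity from Theorem~\ref{th-variance}-(2). The only cosmetic difference is that the paper secures uniform convergence $K_n\to K_\infty$ via Ascoli (equicontinuity plus pointwise convergence), whereas you propose a direct estimate based on Lemma~\ref{con-con}-(2) and the tensor-product density of $C[0,1]\otimes C[0,1]$ in $C[0,1]^2$, a tool the paper itself invokes in the proof of that lemma; both are adequate and of comparable effort.
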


\begin{proof}
  The convergence of $\tr A_n$ toward $\tr K_{\infty}$ follows
  from Lemma \ref{con-con}-(1),(3). Details of the proof are left to
  the reader.

Let us introduce the following kernel:
$$
K_n(x,y) = \frac{\frac 1n \sum_{i=1}^N \sigma^2(\frac iN, x)\sigma^2(\frac iN,y) t_i^2}
{\left(1 +\frac 1n \sum_{i=1}^N \sigma^2(\frac iN,x) t_i\right)^2}\ .
$$ 
One may notice in particular that $a_{\ell, m}=\frac 1n K_n(\frac \ell
n, \frac mn)$. Denote by $\|\cdot\|_{\infty}$ the supremum norm for a function over $[0,1]^2$
and by $\sigma_{\max}^2= \| \sigma^2 \|_{\infty}$, then:
\begin{equation}\label{maj-sup}
\| K_n \|_{\infty} \le \frac Nn
\frac{\sigma_{\max}^4}{\rho^2}\quad \textrm{and}\quad
\| K_{\infty}\|_{\infty} \le c \frac{\sigma_{\max}^4}{\rho^2}\ .
\end{equation}
The following facts (whose proof is omitted) can be established:
\begin{enumerate}
\item The family $(K_n)_{n\ge 1}$ is uniformly equicontinuous,
\item For every $(x,y)$, $K_n(x,y) \rightarrow K_{\infty}(x,y)$ as $n\rightarrow \infty$.
\end{enumerate}
In particular, Ascoli's theorem implies the uniform convergence of $K_n$ toward $K_{\infty}$.
It is now a matter of routine to extend these results and to get the following convergence:
\begin{equation}\label{conv-unif}
K_n
\left( 
\begin{array}{ccc}
x_1 & \cdots & x_k \\
y_1 & \cdots & y_k 
\end{array}\right) \xrightarrow[n\rightarrow \infty]{} K_{\infty} 
\left( 
\begin{array}{ccc}
x_1 & \cdots & x_k \\
y_1 & \cdots & y_k 
\end{array}\right)
\end{equation}
uniformly over $[0,1]^{2k}$. Using the uniform convergence
\eqref{conv-unif} and a dominated convergence argument, we obtain:
$$
\frac 1{n^k} \sum_{1\le i_1,\cdots, i_k\le n} K_n
\left( 
\begin{array}{ccc}
i_1/n & \cdots & i_k/n \\
i_1/n & \cdots & i_k/n 
\end{array}\right) \xrightarrow[n\rightarrow \infty]{} \int_{[0,1]^k} K_{\infty} 
\left( 
\begin{array}{ccc}
x_1 & \cdots & x_k \\
x_1 & \cdots & x_k 
\end{array}\right) \otimes_{i=1}^k d\,x_i\ .
$$
Now, writing the determinant $\det(I_n +\lambda A_n)$ explicitely and
expanding it as a polynomial in $\lambda$, we obtain:
$$
\det(I_n -\lambda A_n) = \sum_{k=0}^n \frac{(-1)^k \lambda^k}{k!}\left(\frac 1{n^k}\sum_{1\le i_1,\cdots, i_k\le n} K_n
\left( 
\begin{array}{ccc}
i_1/n & \cdots & i_k/n \\
i_1/n & \cdots & i_k/n 
\end{array}\right) \right)\ .
$$
Applying Hadamard's inequality (\cite[Theorem 5.2.1]{Smi58}) to the
determinants $K_n(\cdot)$ and $K_{\infty}(\cdot)$ yields:
$$
\frac 1{n^k}\sum_{1\le i_1,\cdots, i_k\le n} K_n
\left( 
\begin{array}{ccc}
i_1/n & \cdots & i_k/n \\
i_1/n & \cdots & i_k/n 
\end{array}\right)\quad  \le \quad {k^{\frac k2}\|K_n\|_{\infty}^k}
\quad \stackrel{(a)}{\le} \quad 
{k^{\frac k2}M^k}\ ,
 $$
where $(a)$ follows from \eqref{maj-sup}. Similarly,
$$
\int_{[0,1]^k} K_{\infty} 
\left( 
\begin{array}{ccc}
x_1 & \cdots & x_k \\
x_1 & \cdots & x_k 
\end{array}\right) \otimes_{i=1}^k d\,x_i \quad \le \quad 
{k^{\frac k2}M^k}\ .
$$
Since the series $\sum_k \frac{M^k k^{\frac k2}}{k!} |\lambda|^k$ converges, a dominated convergence argument yields
the convergence 
$$
\det(I_n +\lambda A_n) \xrightarrow[n\rightarrow \infty]{} \det(1+\lambda K_{\infty})\ ,
$$ and item (2) of the theorem is proved. Item (3) follows from Theorem \ref{th-variance}-(2) 
and the proof of the theorem is completed.

\end{proof}

\subsection{The CLT: Fluctuations and bias }

\begin{coro}[Fluctuations]\label{theo-limiting} Assume that (A-\ref{hypo-moments-X}) and
(A-\ref{hypo-limit-variance-field}) hold. Denote by 
$$
\Theta^2_{\infty} = -\log \det (1 -K_{\infty}) +\kappa \tr K_{\infty}\ ,
$$
then
\begin{eqnarray*}
  \lefteqn{\frac N{\Theta_{\infty}} \left( {\mathcal I}_n(\rho) - \mathbb{E} {\mathcal I}_n(\rho)\right)}\\
  &=& \Theta_{\infty}^{-1} \left( \log \det \left(Y_n Y_n^* +\rho I_N \right) 
    -\mathbb{E}\log \det \left(Y_n Y_n^* +\rho I_N\right)\right) \xrightarrow[n\rightarrow\infty]{\mathcal L} {\mathcal N}(0,1)\ .
\end{eqnarray*}

\end{coro}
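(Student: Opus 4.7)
The plan is to reduce the statement to Theorem \ref{th-clt} by showing that the two normalizing constants $\Theta_n$ and $\Theta_\infty$ are asymptotically equivalent, and then to invoke Slutsky's lemma. Since Theorem \ref{th-clt} already gives
\[
\Theta_n^{-1}\Big(\log\det(Y_n Y_n^* + \rho I_N) - \E\log\det(Y_n Y_n^*+\rho I_N)\Big) \cvgD {\mathcal N}(0,1),
\]
the whole question is whether $\Theta_n^2 \to \Theta_\infty^2$ under the extra assumption {\bf A-\ref{hypo-limit-variance-field}}.

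The first step is to write
\[
\Theta_n^2 = -\log\det(I_n - A_n) + \kappa\,\tr A_n
\qquad\text{and}\qquad
\Theta_\infty^2 = -\log\det(1 - K_\infty) + \kappa\,\tr K_\infty,
\]
and to apply Theorem \ref{lem-limiting-variance}. Item (1) of that theorem gives $\tr A_n \to \tr K_\infty$, item (2) gives $\det(I_n - A_n) \to \det(1 - K_\infty)$ with the limit being nonzero, and item (3) ensures that $\Theta_\infty^2$ is a strictly positive, finite real number. Together with the strict positivity of $\liminf_n \Theta_n^2$ given in Theorem \ref{th-variance}-(2), this yields $\Theta_n / \Theta_\infty \to 1$.

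The second step is then Slutsky's lemma: writing
\[
\frac{N}{\Theta_\infty}\bigl({\mathcal I}_n(\rho) - \E{\mathcal I}_n(\rho)\bigr)
= \frac{\Theta_n}{\Theta_\infty}\cdot \frac{1}{\Theta_n}\Big(\log\det(Y_n Y_n^*+\rho I_N) - \E\log\det(Y_n Y_n^*+\rho I_N)\Big),
\]
the first factor converges to $1$ deterministically and the second converges in distribution to ${\mathcal N}(0,1)$ by Theorem \ref{th-clt}, so the product converges in distribution to ${\mathcal N}(0,1)$.

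There is no substantial obstacle once Theorem \ref{lem-limiting-variance} is in hand; the only point that deserves care is checking that $\Theta_\infty > 0$ (so that division by it is legitimate and the limit distribution is genuinely standard Gaussian), but this is exactly the content of Theorem \ref{lem-limiting-variance}-(3), which is itself inherited from Theorem \ref{th-variance}-(2) via the uniform convergence of the iterated kernels $K_n \to K_\infty$ established in the proof of Theorem \ref{lem-limiting-variance}.
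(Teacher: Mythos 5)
Your proof is correct and takes essentially the same approach as the paper, which simply remarks that the corollary ``follows easily from Theorem \ref{th-clt} and Theorem \ref{lem-limiting-variance}''; you have merely spelled out the convergence $\Theta_n^2 \to \Theta_\infty^2 > 0$ and the Slutsky step that the paper leaves implicit.
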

\begin{proof} follows easily from Theorem \ref{th-clt} and Theorem \ref{lem-limiting-variance}.
\end{proof}

Recall the definition of $\tilde \tau$ (cf. Remark \ref{tau-tilde}).  

\begin{theo}[The bias]\label{th-bias-limiting-variance} Assume that the setting of Corollary \ref{theo-limiting} 
  holds true. Let $\ \omega \in [\rho,\infty)$ and denote by ${\bs
    p}:[0,1]\rightarrow \mathbb{R}$ the quantity:
\begin{multline*}
{\bs p}(x,\omega)= \kappa \omega^2 \tilde \tau^2(x,-\omega) \\
\times \Big\{ \omega c \int_0^1 \sigma^2(u,x) \tau^3(u) \left( \int_0^1 \sigma^2(s,u) \tilde \tau^2(s)\,ds \right)\,du\\
-\tilde \tau(x) c\int_0^1 \sigma^2(u,x) \tau^2(u) \, du\Big\}\ .
\end{multline*}
The following functional equation admits a unique solution:
$$
{\bs w}(x,\omega) =\int_0^1 \frac{c\int_0^1 \sigma^2(u,x) \sigma^2(u,y) \tau^2(u)\,du}
{\left(1 + c\int_0^1 \sigma^2(u,x) \tau(u)\,du\right)^2} {\bs w}(y,\omega) \, dy +{\bs p}(x,\omega)\ .
$$
Let 
$
\beta_{\infty}(\omega) = \int_0^1 {\bs w}(x,\omega)\, dx\ .
$
Then  $\int_{\rho}^\infty \left| \beta_{\infty}(\omega) \right|\, d\omega <\infty$.  
Moreover, 
\begin{equation}\label{conv-bias-limiting}
N \left( \E {\mathcal I}_n(\rho) - V_n(\rho) \right)
\quad \xrightarrow[n\rightarrow \infty,\ \frac Nn \rightarrow c]{} \quad 
{\mathcal B}_{\infty}(\rho)\stackrel{\triangle}{=} \int_\rho^\infty \beta_{\infty}(\omega) d\omega \ .  
\end{equation}
\end{theo}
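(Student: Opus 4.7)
By Theorem~\ref{th-bias}, the quantity $N(\E{\mathcal I}_n(\rho) - V_n(\rho)) - {\mathcal B}_n(\rho)$ already tends to $0$, where ${\mathcal B}_n(\rho) = \int_\rho^\infty \beta_n(\omega)\,d\omega$. It therefore suffices to establish existence and uniqueness of ${\bs w}(\cdot,\omega)$, the convergence $\beta_n(\omega) \to \beta_\infty(\omega)$ pointwise, and a dominated-convergence argument in $\omega$ yielding ${\mathcal B}_n(\rho)\to {\mathcal B}_\infty(\rho)$. The strategy parallels the passage from Theorem~\ref{th-variance} to Theorem~\ref{lem-limiting-variance}: the discrete system~\eqref{eq-def-w-bias}--\eqref{eq-def-beta} is a Riemann sum for its continuous counterpart, with parameter $-\omega$ instead of $-\rho$.

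\textbf{Existence, uniqueness and pointwise convergence.} For $\omega \geq \rho$, introduce the continuous kernel
$$
K_\infty^\omega(x,y) \eqdef
\frac{c\int_0^1 \sigma^2(u,x)\sigma^2(u,y)\tau(u,-\omega)^2\,du}
{\bigl(1+c\int_0^1 \sigma^2(u,x)\tau(u,-\omega)\,du\bigr)^2}
$$
and its discrete analogue $A_n^\omega$ obtained from Theorem~\ref{th-variance} at $z=-\omega$. Applying Theorem~\ref{lem-limiting-variance}--(2) with $\rho$ replaced by $\omega$ gives $\det(1-K_\infty^\omega)\neq 0$; since $K_\infty^\omega$ is continuous on $[0,1]^2$ and defines a compact operator on $C[0,1]$, the operator $1 - K_\infty^\omega$ is invertible. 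The source term ${\bs p}(\cdot,\omega)$ being continuous on $[0,1]$, the functional equation admits a unique solution ${\bs w}(\cdot,\omega) \in C[0,1]$. Lemma~\ref{con-con}, applied at $z=-\omega$, yields the uniform convergence of $t_i(-\omega)$ and $\tilde t_\ell(-\omega)$ to $\tau(i/N,-\omega)$ and $\tilde\tau(\ell/n,-\omega)$; as in the proof of Theorem~\ref{lem-limiting-variance}, this propagates to the uniform convergence of the entries of $A_n^\omega$ and of ${\bs p}_{\ell,n}(\omega)$ to the samples of $K_\infty^\omega$ and ${\bs p}(\cdot,\omega)$. Since $\|K_n^\omega - K_\infty^\omega\|_\infty\to 0$ forces $\|(I-A_n^\omega)^{-1} - (1-K_\infty^\omega)^{-1}\|\to 0$ in the appropriate operator norm, averaging the solution of~\eqref{eq-def-w-bias} produces $\beta_n(\omega)\to\beta_\infty(\omega)$.

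\textbf{Domination in $\omega$ and conclusion.} For large $\omega$, Proposition~\ref{prop-resolvent-properties}--(2) and its analogue for $\tilde T$ give $|t_i(-\omega)|,|\tilde t_\ell(-\omega)|\le\omega^{-1}$, so inspection of~\eqref{eq-p(omega)} yields $|{\bs p}_{\ell,n}(\omega)| = O(\omega^{-2})$ uniformly in $n$ and $\ell$. Similarly $\leftrownorm A_n^\omega\rightrownorm_\infty = O(\omega^{-2})$ by Assumption~\textbf{A-\ref{hypo-variance-field}}, so $(I_n-A_n^\omega)^{-1}$ is bounded in max-row norm by $2$ for $\omega$ large enough; solving~\eqref{eq-def-w-bias} gives $|{\bs w}_{\ell,n}(\omega)|=O(\omega^{-2})$, hence $|\beta_n(\omega)|=O(\omega^{-2})$ uniformly in $n$. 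Combined with~\eqref{eq-integrability-beta} to control $\omega$ on compact sets $[\rho,M]$, this produces an integrable dominant $\phi(\omega)$. Dominated convergence yields ${\mathcal B}_n(\rho)\to{\mathcal B}_\infty(\rho)$, and Fatou's lemma gives $\int_\rho^\infty |\beta_\infty(\omega)|\,d\omega<\infty$; combining with Theorem~\ref{th-bias} establishes~\eqref{conv-bias-limiting}.

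\textbf{Main obstacle.} The delicate point is obtaining the convergence $\beta_n(\omega)\to\beta_\infty(\omega)$ jointly with a tail bound in $\omega$ that is uniform in $n$. The former reduces to a Fredholm-type continuity statement for $(I-A_n^\omega)^{-1}$, and the latter requires a careful inspection of~\eqref{eq-p(omega)} to see that each factor contributes the right power of $\omega^{-1}$ at infinity; both rely on uniformity (in $\omega$ on compacts of $(0,\infty)$) of the convergences in Lemma~\ref{con-con}, which itself is obtained by revisiting its proof with $z=-\omega$ treated as a parameter.
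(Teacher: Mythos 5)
Your proposal follows essentially the same route the paper sketches: existence and uniqueness of ${\bs w}(\cdot,\omega)$ from $\det(1-K_\infty^\omega)\neq 0$ (invoking Theorem~\ref{lem-limiting-variance} at $z=-\omega$), pointwise convergence $\beta_n(\omega)\to\beta_\infty(\omega)$ via uniform convergence of the discrete Riemann-sum kernel to $K_\infty^\omega$ and of ${\bs p}_{\cdot,n}$ to ${\bs p}(\cdot,\omega)$ (Lemma~\ref{con-con}), followed by dominated convergence in $\omega$. The only presentational difference is that the paper expresses the limit-passage through the explicit Fredholm resolvent-kernel representation of ${\bs w}$ (in the style of Smithies), whereas you phrase it as operator-norm convergence of $(1-K)^{-1}$ for compact integral operators on $C[0,1]$; these are two formulations of the same compactness argument.

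Two small inaccuracies in your domination step are worth flagging. First, Eq.~\eqref{eq-integrability-beta} is an $L^1$-type bound ($\limsup_n\int_\rho^\infty|\beta_n|<\infty$), and cannot by itself furnish a pointwise dominant on $[\rho,M]$; what does the job is the uniform-in-$n$ pointwise estimate $|\beta_n(\omega)|\le K'/\omega^3$ established in Step~\ref{sec-prf-bias-dct} of the proof of Theorem~\ref{th-bias} (Eq.~\eqref{eq-dct-beta}), and you should invoke that instead. Second, a closer inspection of~\eqref{eq-p(omega)} gives $\|{\bs p}(\cdot,\omega)\|_\infty=O(\omega^{-3})$ rather than your $O(\omega^{-2})$ (each of $\omega^2\tilde t_\ell^2$, the first brace term $\omega\cdot O(\omega^{-3})\cdot O(\omega^{-2})$, and the second brace term $O(\omega^{-1})\cdot O(\omega^{-2})$ contributes accordingly); this is not a gap since $\omega^{-2}$ is already integrable, but the sharper rate is what the paper actually proves, and the $\omega^{-3}$ power is needed to make the uniform bound consistent with \eqref{eq-dct-beta}.
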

Proof of Theorem \ref{th-bias-limiting-variance}, although technical,
follows closely the classical Fredholm theory as presented for instance
in \cite[Chapter 5]{Smi58}. We sketch it below.

\begin{proof}[Sketch of proof]
  The existence and unicity of the functional equation follows from
  the fact that the Fredholm determinant $\det(1-K_{\infty})$ differs
  from zero. In order to prove the convergence
  \eqref{conv-bias-limiting}, one may prove the convergence
  $\int_\rho^{\infty} \beta_n \rightarrow \int_\rho^{\infty}
  \beta_\infty$ (where $\beta_n$ is defined in Theorem \ref{th-bias})
  by using an explicit representation for $\beta_\infty$ relying on
  the explicit representation of the solution ${\bs w}$ via the
  resolvent kernel associated to $K_{\infty}$ (see for instance
  \cite[Section 5.4]{Smi58}) and then approximate the resolvent kernel
  as done in the proof of Theorem \ref{lem-limiting-variance}.
\end{proof}
\subsection{The case of a separable variance profile}
We now state a consequence of Corollary \ref{theo-limiting} in the
case where the variance profile is separable. Recall the definitions
of $\tau$ and $\tilde \tau$ given in \eqref{coupled-eq}.

\begin{coro}[Separable variance profile]\label{magic-coro}
  Assume that {\bf A-\ref{hypo-moments-X}} and
{\bf A-\ref{hypo-limit-variance-field}} hold. 
  Assume moreover that $\rho>0$ and that $\sigma^2$ is separable, {\em i.e.} that 
$$ 
\sigma^2(x,y)=d(x) \tilde d(y)\ , 
$$ 
where both $d:[0,1]\rightarrow (0,\infty)$ and $\tilde d: [0,1] \rightarrow
  (0,\infty)$ are continuous functions. Denote by 
$$
\gamma=c\int_0^1 d^2(t) \tau^2(t)\, dt\qquad \textrm{and}\qquad 
\tilde \gamma= \int_0^1 \tilde d^2(t) \tilde \tau^2(t)\, dt\ . 
$$
Then 
\begin{equation}\label{magic-variance}
\Theta^2_{\infty} = -\log\left( 1 - \rho^2 \gamma \tilde \gamma\right) +\kappa \rho^2 \gamma \tilde \gamma\ .
\end{equation}
\end{coro}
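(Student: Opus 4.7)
The plan is to substitute the separable structure $\sigma^2(x,y)=d(x)\tilde d(y)$ directly into the limiting kernel $K_\infty$ defined in \eqref{kernel}, use the coupled equations \eqref{coupled-eq} to rewrite the denominator in terms of $\tilde\tau$, and observe that $K_\infty$ collapses to a rank-one (degenerate) kernel; the Fredholm determinant and trace then have closed forms, and plugging into Corollary \ref{theo-limiting} yields \eqref{magic-variance}.

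First I would plug $\sigma^2(u,x)=d(u)\tilde d(x)$ into \eqref{kernel}. The numerator factors as $c\,\tilde d(x)\tilde d(y)\int_0^1 d^2(u)\tau^2(u)\,du=\tilde d(x)\tilde d(y)\,\gamma$, while the denominator becomes $(1+c\tilde d(x)\int_0^1 d(u)\tau(u)\,du)^2$. From the second identity of \eqref{coupled-eq} evaluated at $z=-\rho$ with the separable profile,
\begin{equation*}
\tilde\tau(x)=\frac{1}{\rho\bigl(1+c\tilde d(x)\int_0^1 d(u)\tau(u)\,du\bigr)},
\end{equation*}
so that $1+c\tilde d(x)\int_0^1 d(u)\tau(u)\,du=1/(\rho\tilde\tau(x))$. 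Hence
\begin{equation*}
K_\infty(x,y)=\rho^2\,\gamma\,\tilde\tau^2(x)\tilde d(x)\,\tilde d(y),
\end{equation*}
which is of rank one, of the form $a(x)b(y)$ with $a(x)=\rho^2\gamma\tilde\tau^2(x)\tilde d(x)$ and $b(y)=\tilde d(y)$.

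Next I would exploit the rank-one structure. From the Fredholm expansion \eqref{def-fredholm}, any minor of size $k\ge 2$ of a rank-one kernel vanishes, so $\tr K_\infty^k=(\tr K_\infty)^k$ for all $k\ge 1$, and
\begin{equation*}
\det(1-K_\infty)=1-\tr K_\infty.
\end{equation*}
Computing the trace directly,
\begin{equation*}
\tr K_\infty=\int_0^1 a(x)b(x)\,dx=\rho^2\gamma\int_0^1 \tilde d^2(x)\tilde\tau^2(x)\,dx=\rho^2\gamma\tilde\gamma.
\end{equation*}

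Finally I would substitute these two identities into the expression $\Theta^2_\infty=-\log\det(1-K_\infty)+\kappa\tr K_\infty$ provided by Theorem \ref{lem-limiting-variance} and Corollary \ref{theo-limiting}, obtaining exactly \eqref{magic-variance}. The positivity constraint $\rho^2\gamma\tilde\gamma<1$ needed for the logarithm is automatic from Theorem \ref{lem-limiting-variance}-(2), which guarantees $\det(1-K_\infty)\neq 0$ together with the fact that $K_\infty$ is a non-negative kernel so $\tr K_\infty\in[0,1)$. There is no real obstacle here; the only step requiring a touch of care is the algebraic identification of the denominator of $K_\infty$ with $1/(\rho\tilde\tau)^2$ via \eqref{coupled-eq}, which is the key simplification that unveils the rank-one structure and makes the Fredholm determinant reduce to a single scalar.
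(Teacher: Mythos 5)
Your proposal is correct and follows essentially the same route as the paper's own proof: substitute the separable form into the kernel \eqref{kernel}, use the second identity in \eqref{coupled-eq} to recognize the denominator as $1/(\rho\tilde\tau(x))^2$, observe that $K_\infty(x,y)=\rho^2\gamma\,\tilde d(x)\tilde d(y)\tilde\tau^2(x)$ is rank one so all $k\times k$ minors with $k\ge 2$ vanish, and hence $\det(1-K_\infty)=1-\tr K_\infty=1-\rho^2\gamma\tilde\gamma$, which plugged into $\Theta^2_\infty=-\log\det(1-K_\infty)+\kappa\tr K_\infty$ gives \eqref{magic-variance}. The only cosmetic difference is your added remark on why $\rho^2\gamma\tilde\gamma<1$; the paper leaves that implicit, relying on Theorem \ref{lem-limiting-variance}-(2).
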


\begin{rem}
In the case where the random variables $X_{ij}$ are standard complex circular gaussian
(i.e. $X_{ij}= U_{ij} +\ii V_{ij}$ with $U_{ij}$ and $V_{ij}$ independent real centered 
gaussian random variables with variance $2^{-1}$) and where the variance profile is separable, 
then 
$$
N({\mathcal I}_n(\rho) - V_n(\rho)) \xrightarrow[n\rightarrow \infty]{\mathcal L} 
{\mathcal N}\left(0,-\log\left( 1 - \rho^2 \gamma \tilde \gamma\right)\right)\ .
$$
This result is in accordance with those in \cite{MSS03} and in \cite{HKLNP06pre}.
\end{rem}

\begin{proof} Recall the definitions of $\tau$ and $\tilde \tau$ given
  in \eqref{coupled-eq}. In the case where the variance profile is
  separable, the kernel $K_{\infty}$ writes:
$$
K_{\infty}(x,y)\quad =\quad \frac{c \tilde d(x) \tilde d(y) \int_{[0,1]} d^2(u)
  \tau^2(u) \, du}{\left(1 +c\tilde d(x) \int_{[0,1]} d(u) \tau(u) \,
    du \right)^2}\quad  = \quad \rho^2 \gamma \tilde d(x) \tilde d(y) \tilde
\tau^2(x)\ .
$$
In particular, one can readily prove that $\tr K_{\infty} = \rho^2
\gamma \tilde \gamma$.  Since the kernel $K_{\infty}(x,y)$ is itself a
product of a function depending on $x$ times a function depending on
$y$, the determinant 
$
K_{\infty}
{\tiny \left( 
\begin{array}{ccc}
x_1 & \cdots & x_k \\
y_1 & \cdots & y_k 
\end{array}\right)} 
$ is equal to zero for $k\ge 2$ and the Fredholm determinant writes
$
\det(1-K_{\infty})= 1-\int_{[0,1]} K_{\infty}(x,x)dx= 1-\rho^2 \gamma \tilde \gamma\ .
$
This yields
$$
-\log \det (1-A_{\infty})+\kappa \tr K_{\infty} \quad =\quad -\log(1-\rho^2 \gamma \tilde \gamma) +\kappa \rho^2 \gamma \tilde \gamma\ ,
$$ 
which ends the proof. 
\end{proof}

\section{Proof of Theorem \ref{th-variance}}
\label{proof-variance}
Recall the definition of the $n\times n$ variance matrix $A_n$:
$$
a_{\ell, m} = \frac{1}{n^2} \frac{\tr D_{\ell} D_m T(-\rho)^2}
{\left(1 + \frac{1}{n} \tr D_{\ell} T(-\rho)  \right)^2} \ , \quad 1\le \ell, m \le n.
$$
In the course of the proof of the CLT (Theorem
\ref{th-clt}), the quantity that will naturally pop up as a variance
will turn out to be: 
\begin{equation}\label{thetatilde}
\tilde \Theta_n^2 = \tilde{\mathcal V}_n +\kappa
{\mathcal W}_n
\end{equation}
(recall that ${\mathcal W}_n=\tr A_n$) where
$\tilde{\mathcal V}_n$ is introduced in the following lemma:

\begin{lemma}\label{system-aux} Consider a variance profile $(\sigma_{ij})$ which fulfills assumptions 
  {\bf A-\ref{hypo-variance-field}} and {\bf
    A-\ref{hypo-inf-trace-Dj}} and the related $t_i$'s defined in
  Theorem \ref{first-order}-(1). Let $\rho>0$ and consider the matrix
  $A_n$ as defined above.
\begin{enumerate}
\item For $1\le j\le n$, the system of $(n-j+1)$ linear equations with unknown parameters 
$(\bs{y_{\ell,n}^{(j)}},\ j\le\ell \le n)$:
\begin{equation}\label{TheSystem}
\bs{y_{\ell,n}^{(j)}}=
\sum_{m=j+1}^n  a_{\ell, m}\  
\bs{y_{m,n}^{(j)}}
+ a_{\ell, j}
\end{equation}
admits a unique solution for $n$ large enough. 
\end{enumerate}
Denote by $\tilde{\mathcal V}_n$ the sum of the first components 
of vectors $(\bs{y_{\ell,n}^{(j)}},\ j\le\ell \le n)$, i.e.:
$$
\tilde{\mathcal V}_n=\sum_{j=1}^n \bs{y_{j,n}^{(j)}}.
$$ 
\begin{itemize}
\item[2.] Let $\kappa$ be a real number satisfying $\kappa \geq -1$.
The sequence $\left(\tilde {\mathcal V}_n + \kappa {\mathcal W}_n\right)$
satisfies 
$$
0\quad  < \quad \liminf_n \left(\tilde {\mathcal V}_n + \kappa {\mathcal W}_n \right)\quad \leq\quad  \limsup_n 
\left( \tilde{\mathcal V}_n + \kappa {\mathcal W}_n \right)\quad 
<\quad  \infty
$$ as $n \to \infty$ and $N/n \to c > 0$. \\
\item[3.] 
The following holds true: 
$$
\tilde{\mathcal V}_n + \log\det(I_n -A_n) \xrightarrow[n\rightarrow \infty]{} 0.
$$
\end{itemize}
\end{lemma}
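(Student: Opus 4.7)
The three items pivot on the identity
\begin{equation*}
\det(I_n - A_n) \ = \ \prod_{j=1}^n \bigl(1 - y_j^{(j)}\bigr),
\end{equation*}
from which item $(3)$ will fall out by a logarithm expansion. Throughout I would rely on the factorization $A_n = D^{-2} S$ with $D = \diag\bigl(1 + \frac{1}{n}\tr D_\ell T(-\rho)\bigr)$ diagonal and $S_{\ell m} = \frac{1}{n^2}\tr D_\ell D_m T(-\rho)^2$ symmetric positive semidefinite (the latter follows from $x^T S x = \frac{1}{n^2}\sum_i t_i^2 (\sum_\ell x_\ell \sigma^2_{i\ell})^2 \ge 0$). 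Consequently $A_n$ is similar to the symmetric p.s.d.\ matrix $D^{-1} S D^{-1}$, so its eigenvalues are real and nonnegative, and Cauchy interlacing applies to all principal submatrices. The keystone input, and what I expect to be the main difficulty, is the uniform bound $r(A_n) \le 1 - \varepsilon$; I would obtain it either by identifying $A_n$ with the Jacobian at the fixed point of the deterministic-equivalent system (\ref{eq-systeme-approx-deter}) and invoking uniqueness of the solution in $\mathcal{S}(\Rplus)^N$, or by a direct Perron--Frobenius-style argument using A-\ref{hypo-inf-trace-Dj}.

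For item $(1)$, the equation indexed by $\ell = j$ merely expresses $y_j^{(j)}$ as an affine function of $(y_m^{(j)})_{m>j}$, so unique solvability reduces to invertibility of $I - B^{(j)}$, where $B^{(j)} := (a_{\ell m})_{j < \ell,m \le n}$ is a principal submatrix of $A_n$. The factorization above together with Cauchy interlacing yields $r(B^{(j)}) \le r(A_n) \le 1 - \varepsilon$ uniformly in $j$, giving both invertibility and the resolvent bound $\|(I - B^{(j)})^{-1}\| = O(1)$. For item $(3)$, the product identity is proved by recursion on $j$: if $M_j$ denotes the principal submatrix of $I_n - A_n$ on indices $\{j,\ldots,n\}$, the Schur complement applied to
\begin{equation*}
M_j = \begin{pmatrix} 1 - a_{jj} & -\mathbf{a}_{j\cdot}^{T} \\ -\mathbf{a}_{\cdot j} & M_{j+1} \end{pmatrix}
\end{equation*}
gives $\det M_j = \det M_{j+1} \cdot \bigl(1 - a_{jj} - \mathbf{a}_{j\cdot}^{T} M_{j+1}^{-1} \mathbf{a}_{\cdot j}\bigr)$, and the bracket is recognized as $1 - y_j^{(j)}$ since $M_{j+1}^{-1}\mathbf{a}_{\cdot j}$ is precisely the vector $(y^{(j)}_m)_{m > j}$ furnished by the defining system. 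A log-expansion then produces
\begin{equation*}
\log\det(I_n - A_n) + \tilde{\mathcal V}_n \ = \ \sum_{j=1}^n \bigl(\log(1-y_j^{(j)}) + y_j^{(j)}\bigr) \ = \ O\Bigl(\sum_{j=1}^n (y_j^{(j)})^2\Bigr),
\end{equation*}
which tends to $0$ as soon as $\max_j |y_j^{(j)}| = O(1/n)$. The latter follows from $y_j^{(j)} = a_{jj} + \mathbf{a}_{j\cdot}^{T}(I - B^{(j)})^{-1}\mathbf{a}_{\cdot j}$: each entry $a_{\ell m}$ is $O(1/n)$ uniformly (numerator $O(1/n)$ since $|t_i| \le 1/\rho$ and $\sigma^2$ is bounded, denominator $\ge 1$), so Cauchy--Schwarz gives $\|\mathbf{a}_{j\cdot}\|_2 \|\mathbf{a}_{\cdot j}\|_2 = O(1/n)$, and the $O(1)$ resolvent bound from item $(1)$ closes the estimate.

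Finally, for item $(2)$, by item $(3)$ it suffices to control $\mathcal{U}_n := -\log\det(I_n - A_n) + \kappa \tr A_n$ from above and below. Since $r(A_n) < 1$ the expansion $-\log\det(I_n - A_n) = \sum_{k\ge 1}\tr(A_n^k)/k$ is valid, giving
\begin{equation*}
\mathcal{U}_n \ = \ (1+\kappa)\tr A_n + \sum_{k \ge 2} \frac{\tr(A_n^k)}{k},
\end{equation*}
which is nonnegative whenever $\kappa \ge -1$, the spectrum of $A_n$ being nonnegative. An upper bound follows from $-\log(1-\lambda) \le \lambda/(1-r(A_n))$ applied eigenvalue by eigenvalue, combined with $\tr A_n = O(1)$ (immediate from $a_{\ell\ell} = O(1/n)$). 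Strict positivity of the liminf comes from a uniform lower bound $\tr A_n \ge c > 0$: assumption A-\ref{hypo-inf-trace-Dj} combined with the uniform lower bound $t_i(-\rho) \ge (\rho + \sigma_{\max}^2)^{-1}$ derived from the fixed-point equation forces each $a_{\ell\ell}$ to be of order exactly $1/n$, and summation over $\ell$ concludes.
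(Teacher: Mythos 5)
Your route for items (1) and (3) is correct and genuinely different from the paper's. Where the paper treats $A_n$ purely as a nonnegative matrix and works through Perron--Frobenius machinery (Lemma \ref{lm-(I-A)regular} plus the companion equation $u=A_n u+v$ of Proposition \ref{lm-properties-A}, using max-row norms), you exploit the factorization $A_n=D^{-2}S$ with $S$ symmetric psd, so that $A_n$ has a real nonnegative spectrum, Cauchy interlacing controls all the principal submatrices $B^{(j)}$, and spectral-norm bounds on $(I-B^{(j)})^{-1}$ do the work of the paper's max-row norm estimates. Your Schur-complement recursion giving the exact identity $\det(I_n-A_n)=\prod_j (1-\bs{y^{(j)}_{j,n}})$ is the same telescoping used in the paper, but stated more crisply, and your $O(1/n)$ bound on $\bs{y^{(j)}_{j,n}}$ by Cauchy--Schwarz is valid. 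The one thing to flag is that the keystone estimate $r(A_n)\le 1-\varepsilon$ is asserted rather than proved; your ``direct Perron--Frobenius-style argument using A-\ref{hypo-inf-trace-Dj}'' is exactly the paper's Proposition \ref{lm-properties-A}, so the route is sound, but as written it is a promise, not a proof.

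There is, however, a genuine gap in your proof of item (2), and it occurs precisely at the boundary case $\kappa=-1$ which the hypothesis explicitly allows. You write
\begin{equation*}
\mathcal U_n = (1+\kappa)\tr A_n + \sum_{k\ge 2}\frac{\tr(A_n^k)}{k}
\end{equation*}
and then claim that the strict positivity of $\liminf_n\mathcal U_n$ ``comes from a uniform lower bound $\tr A_n\ge c>0$.'' That argument only sees the first term, which is killed when $\kappa=-1$. In that case the entire burden falls on $\sum_{k\ge2}\tr(A_n^k)/k$, and you have given no lower bound for it. This is not a formality: $\tr(A_n^2)=\sum_{\ell,m}a_{\ell m}a_{m\ell}$ is a double sum of $n^2$ terms each of order $1/n^2$, so one must actually show the sum does not degenerate, and the crude bound $\tr(A_n^2)\ge\sum_\ell a_{\ell\ell}^2 = O(1/n)$ is vacuous. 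The paper's proof of this point is the most technical part of Lemma \ref{system-aux}: it lower-bounds $\sum_j(\bs{y^{(j)}_{j,n}}-a_{jj})\ge [A_n^{0,(j)}\bs{d_n^{(j)}}]_1$ and then runs a chain of Cauchy--Schwarz estimates over the double index $(j,\ell)$, using $\frac 1n\tr D_\ell D_j$ and Assumption A-\ref{hypo-inf-trace-Dj} together with $N/n\to c$, to arrive at $\liminf_n\ge c^2\sigma_{\min}^8/3$. Your proposal contains no analogue of this estimate, so as written the proof of item (2) fails for $\kappa=-1$.
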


Obviously, Theorem \ref{th-variance} is a by-product of Lemma
\ref{system-aux}. The remainder of the section is devoted to the proof
of this lemma.

We cast the linear system \eqref{TheSystem} into a matrix framework and we denote by 
$A_n^{(j)}$ the $(n-j+1)\times (n-j+1)$ submatrix $A_n^{(j)}=(a_{\ell, m})_{\ell, m=j}^n$, by
$A_n^{0,(j)}$ the $(n-j+1)\times (n-j+1)$ matrix $A_n^{(j)}$ where the first column is replaced by zeros. 
Denote by $\bs{d_{n}^{(j)}}$ the $(n-j+1) \times 1$ vector :
$$
\bs{d_{n}^{(j)}} = \left( \frac 1n \frac{ \frac{1}{n} \tr D_{\ell} D_{j}
    T(-\rho)^2}{\left( 1+\frac 1n \tr D_{\ell} T(-\rho)\right)^2}
\right)_{\ell=j}^n \ .
$$
These notations being introduced, the system can be 
rewritten as:
\begin{equation}
\label{eq-x=Ax+v}
\bs{y_n^{(j)}} =A_n^{0,(j)} \bs{y_n^{(j)}}  + \bs{d_{n}^{(j)}}
\qquad \Leftrightarrow \qquad
(I-A_n^{0,(j)})\bs{y_n^{(j)}}= \bs{d_n^{(j)}} \ .
\end{equation} 
The key issue that appears is to study the invertibility of matrix
$(I-A_n^{0,(j)})$ and to get some bounds on its inverse.

\subsection{ Results related to matrices with nonnegative entries }

The purpose of the next lemma is to state some of the properties of
matrices with non-negative entriess that will appear to be satisfied
by matrices $A_n^{0,(j)}$. We shall use the following notations.
Assume that $M$ is a real matrix. By $M \succ 0$ (resp.~$M
\succcurlyeq 0$) we mean that $m_{ij} > 0$ (resp.~$m_{ij} \geq 0$) for
every element $m_{ij}$ of $M$. We shall write $M \succ M'$ (resp. $M
\succcurlyeq M'$) if $M-M' \succ 0$ (resp. $M -M'\succcurlyeq 0$).  If
$x$ and $y$ are vectors, we denote similarly $x \succ 0$, $x
\succcurlyeq 0$ and $x \succcurlyeq y$.

\begin{lemma}
\label{lm-(I-A)regular}
Let $A=(a_{\ell, m})_{\ell, m=1}^n$ be a $n \times n$ real matrix and 
$u = (u_{\ell}, 1 \leq \ell \leq n)$, 
$v = (v_{\ell}, 1 \leq \ell \leq n)$ be two real 
$n \times 1$ vectors. Assume that $A \succcurlyeq 0$, $u \succ 0$, and
$v \succ 0$. Assume furthermore that equation 
$$
u = Au + v 
$$ 
is satisfied. Then:
\begin{enumerate}
\item The spectral radius $r(A)$ of $A$ satisfies 
$r(A) \leq 1 - \frac{\min(v_{\ell})}{\max(u_{\ell})}  < 1$.
\item 
\label{it-elts-diag>1} 
Matrix $I_n -A$ is invertible and its inverse $\left( I_n - A \right)^{-1}$ 
satisfies:
$$
\left( I_n - A \right)^{-1}\succcurlyeq 0 \quad \textrm{and}\quad 
\left[ \left( I_n - A \right)^{-1} \right]_{\ell \ell} \geq 1 
$$ for every $1\le \ell \le n$.
\item 
\label{it-bound-maxrownorm}
The max-row norm of the inverse is bounded: $\leftrownorm \left( I_n - A \right)^{-1} \rightrownorm_\infty
\leq \frac{\max_{\ell}(u_{\ell})}{\min_{\ell}(v_{\ell})}$. 
\item\label{quatre} Consider the $(n-j+1) \times (n-j+1)$ submatrix
  $A^{(j)} = ( a_{\ell m} )_{\ell,m=j}^n$ and denote by $A^{0,(j)}$
  matrix $A^{(j)}$ whenever the first column is replaced by zeros.
  Then properties (1) and (2) are valid for $A^{0,(j)}$ and
$$
\leftrownorm \left( I_{(n-j+1)} - A^{(j)} \right)^{-1} \rightrownorm_\infty
\leq \frac{\max_{1\le \ell \le n}(u_{\ell})}
{\min_{1\le \ell \le n}(v_{\ell})}\ .
$$
\end{enumerate}  
\end{lemma}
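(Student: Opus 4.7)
My plan is to exploit the equation $u = Au + v$ via Perron–Frobenius / Collatz–Wielandt style arguments, which is the natural toolbox for nonnegative matrices. The key observation driving everything is that the hypothesis gives $Au = u - v \preccurlyeq (1 - \min_\ell v_\ell / \max_\ell u_\ell) u$ componentwise, because for each coordinate $\ell$ we have $(Au)_\ell = u_\ell - v_\ell \leq u_\ell - \min v \leq (1 - \min v / \max u) u_\ell$. Writing $\alpha = 1 - \min v/\max u \in [0,1)$, iteration of $A \succcurlyeq 0$ gives $A^k u \preccurlyeq \alpha^k u$ and so, since $u \succ 0$, $\|A^k\|$ is bounded geometrically by $\alpha^k$, which proves $r(A) \leq \alpha$ and hence (1).

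Once $r(A) < 1$, assertion (2) follows from the Neumann series $(I_n - A)^{-1} = \sum_{k \geq 0} A^k$: the terms are nonnegative, so the sum is nonnegative, and since $A^0 = I_n$ appears, all diagonal entries are $\geq 1$. For (3), I would apply $(I_n - A)^{-1}$ to the equation $(I_n - A) u = v$ to recover $u = (I_n - A)^{-1} v$; since $(I_n - A)^{-1} \succcurlyeq 0$, the $\ell$-th row sum of $(I_n - A)^{-1}$ is bounded above by $u_\ell / \min v \leq \max u / \min v$, which is exactly the max-row norm bound.

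For (4), the main trick will be to cook up appropriate $u', v' \succ 0$ satisfying an analogous fixed-point equation with $A^{(j)}$ (or $A^{0,(j)}$) in place of $A$, so that the previous arguments apply verbatim. I would set $u'_\ell = u_\ell$ for $\ell \geq j$; then the original identity rearranges as
\[
u'_\ell \; = \; \sum_{m=j}^n a_{\ell m} u'_m \; + \; \Bigl( v_\ell + \sum_{m=1}^{j-1} a_{\ell m} u_m \Bigr), \qquad \ell = j,\dots,n,
\]
i.e. $u' = A^{(j)} u' + v'$ with $v'_\ell \geq v_\ell$. Hence $\min v' \geq \min v$ and $\max u' \leq \max u$, so the bounds of (1)–(3) carry over with the same constants $\max u / \min v$. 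For $A^{0,(j)}$, which differs from $A^{(j)}$ by zeroing out the first column, the same maneuver produces $v''_\ell = v'_\ell + a_{\ell j} u_j \succcurlyeq v' \succcurlyeq v$, and since $A^{0,(j)} \preccurlyeq A^{(j)}$ the spectral radius bound degrades in the right direction.

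The only mildly delicate point I anticipate is making the Collatz–Wielandt step in (1) fully rigorous without invoking Perron–Frobenius by name; the cleanest route is probably the direct geometric-series argument on $A^k u$ sketched above, which also gives the Neumann series needed for (2) and (3). Assertion (4) is then essentially bookkeeping: identify the correct perturbed right-hand side and invoke (1)–(3).
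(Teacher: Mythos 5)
Your proof is correct, and it diverges from the paper's in two places worth noting. For part (1), you establish $r(A)\le\alpha$ by iterating $A^k u \preccurlyeq \alpha^k u$ and extracting a geometric bound on some norm of $A^k$ (with the implicit constant $\max u/\min u$ coming from $u\succ 0$), whereas the paper simply cites a comparison theorem for spectral radii of nonnegative matrices (Corollary 8.1.29 of Horn and Johnson) to go directly from $\alpha u \succcurlyeq Au$ to $r(A)\le\alpha$. Both are valid; yours is self-contained but should be stated slightly more carefully (you get $\|A^k\|_\infty\le (\max u/\min u)\alpha^k$, not $\alpha^k$ itself, which still yields the spectral radius bound via Gelfand's formula). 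For part (4), your route is genuinely different and, to my taste, cleaner: you rewrite the original fixed-point identity as $u'=A^{(j)}u'+v'$ (resp.\ $u'=A^{0,(j)}u'+v''$) with $u'\succ 0$, $v'\succcurlyeq v\succ 0$, $v''\succcurlyeq v'\succ 0$, so that parts (1)--(3) apply verbatim to the submatrices with constants only improving. The paper instead argues by monotonicity of spectral radius under passage to principal submatrices and under entrywise domination (citing Corollaries 8.1.20 and 8.1.18 of Horn and Johnson), and compares Neumann series to transfer the max-row-norm bound from $(I-A)^{-1}$ to $(I-\tilde A^{(j)})^{-1}$ and then to $(I-A^{(j)})^{-1}$. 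Your construction of the perturbed equation avoids the need for these comparison lemmas, at the cost of a short bookkeeping computation; both approaches give the same bound $\max_\ell u_\ell/\min_\ell v_\ell$.
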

\begin{proof}
Let $\alpha=  1 - \frac{\min(v_{\ell})}{\max(u_{\ell})}$.
Since $u \succ 0$ and $v \succ 0$, $\alpha$ readily satisfies $\alpha < 1$
and $\alpha u \succcurlyeq u - v = A u$ which in turn implies that  
$r(A) \leq \alpha < 1$ \cite[Corollary 8.1.29]{HorJoh94} and (1) is proved.
In order to prove (2), first note that $\forall m\ge 1,\ A^m \succcurlyeq 0$.
As $r(A) < 1$, the series $\sum_{m\geq 0} A^m$ converges, matrix $I_n - A$ is invertible and  
$(I_n - A)^{-1} = \sum_{m\geq 0} A^m\succcurlyeq
I_n \succcurlyeq 0$. This in particular implies that
$[ \left( I_n - A \right)^{-1} ]_{\ell \ell} \geq 1 $ for 
every $1\le \ell\le n$ and (2) is proved. 
Now $u = ( I_n - A)^{-1} v$ implies that for every $1\le k\le n$,
$$
u_k = 
\sum_{\ell=1}^n \left[ \left( I_n - A \right)^{-1} \right]_{k\ell} v_{\ell} 
\geq \min(v_{\ell}) \sum_{\ell=1}^n \left[ \left( I_n - A \right)^{-1} \right]_{k\ell}\ ,
$$
hence (3). 

We shall first prove (4) for matrix $A^{(j)}$, then show how
$A^{0,(j)}$ inherits from $A^{(j)}$'s properties. In \cite{HorJoh94},
matrix $A^{(j)}$ is called a principal submatrix of $A$.  In
particular, $r(A^{(j)}) \leq r(A)$ by \cite[Corollary
8.1.20]{HorJoh94}. As $A \succcurlyeq 0$, one readily has $A^{(j)}
\succcurlyeq 0$ which in turn implies property (2) for $A^{(j)}$.  Let
$\tilde A^{(j)}$ be the matrix $A^{(j)}$ augmented with zeros to reach
the size of $A$. The inverse $(I_{n-j+1} - A^{(j)})^{-1}$ is a
principal submatrix of $(I_{n} - \tilde A^{(j)})^{-1} \succcurlyeq 0$.
Therefore, $\leftrownorm \left( I_{(n-j+1)} - A^{(j)} \right)^{-1}
\rightrownorm_\infty \le \leftrownorm \left( I_n - \tilde A^{(j)}
\right)^{-1} \rightrownorm_\infty$.  Since $A^m \succcurlyeq (\tilde
A^{(j)})^m$ for every $m$, one has $\sum_{m\geq 0} A^m \succcurlyeq
\sum_{m\geq 0} (\tilde A^{(j)})^m$; equivalently $(I - A)^{-1}
\succcurlyeq (I - \ti A^{(j)})^{-1}$ which yields $\leftrownorm
\left( I- \tilde A^{(j)} \right)^{-1} \rightrownorm_\infty \le
\leftrownorm \left( I - A \right)^{-1} \rightrownorm_\infty$. Finally
(4) is proved for matrix $A^{(j)}$.

We now prove (4) for $A^{0,(j)}$. By \cite[Corollary
8.1.18]{HorJoh94}, $r(A^{0,(j)})\le r(A^{(j)})<1$ as $A^{(j)} \succcurlyeq A^{0,(j)}$.
Therefore, $(I-A^{0,(j)})$ is invertible and 
$$
(I-A^{0,(j)})^{-1} =\sum_{k=0}^\infty [A^{0,(j)}]^k \ .
$$
This in particular yields $(I-A^{0,(j)})^{-1} \succcurlyeq 0$ and $ (I-A^{0,(j)})^{-1}_{k k} \ge 1$.
Finally, as $A^{(j)} \succcurlyeq A^{0,(j)}$, one has
$$
\leftrownorm
\left( I- \tilde A^{0,(j)} \right)^{-1} \rightrownorm_\infty \le
\leftrownorm
\left( I- \tilde A^{(j)} \right)^{-1} \rightrownorm_\infty \ .
$$
Item (4) is proved and so is Lemma \ref{lm-(I-A)regular}.
\end{proof}

\subsection{Proof of Lemma \ref{system-aux}: Some preparation}
The following bounds will be needed: 
\begin{prop}\label{petite-mino}
Let $\rho>0$, consider a variance profile $(\sigma_{ij})$ which fulfills assumption
{\bf A-\ref{hypo-variance-field}} and 
consider the related $t_i$'s defined in Theorem \ref{first-order}-(1).
The following holds true:
$$
\frac 1\rho \quad \ge\quad  t_{\ell}(-\rho) \quad \ge\quad \frac{1}{\rho +\sigma_{\max}^2}\ .
$$
\end{prop}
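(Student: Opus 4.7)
The plan is to exploit the coupled fixed-point representation from Lemma~\ref{th-deter-approx-details}, namely
$$
t_{i,n}(-\rho) = \frac{1}{\rho\bigl(1 + \frac{1}{n}\tr \tilde D_{i,n}\tilde T_n(-\rho)\bigr)},
\qquad
\tilde t_{j,n}(-\rho) = \frac{1}{\rho\bigl(1 + \frac{1}{n}\tr D_{j,n} T_n(-\rho)\bigr)},
$$
together with the fact that all the $t_i$'s and $\tilde t_j$'s lie in $\mathcal{S}(\Rplus)$.

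First I would observe that, since every element of $\mathcal{S}(\Rplus)$ is the Stieltjes transform of a probability measure supported on $\Rplus$, each $t_i(-\rho)$ and $\tilde t_j(-\rho)$ admits the integral representation
$$
t_i(-\rho) = \int_0^\infty \frac{\mu_i(d\lambda)}{\lambda+\rho},
\qquad
\tilde t_j(-\rho) = \int_0^\infty \frac{\tilde\mu_j(d\lambda)}{\lambda+\rho},
$$
so that $0 < t_i(-\rho)\le 1/\rho$ and $0 < \tilde t_j(-\rho)\le 1/\rho$. This immediately gives the upper bound $t_\ell(-\rho)\le 1/\rho$ (and also recovers it from the fixed-point equation, since the term $\frac{1}{n}\tr \tilde D_i \tilde T(-\rho)$ is nonnegative).

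For the lower bound, I would bound the denominator of the fixed-point equation from above using $\sigma_{ij}^2\le \sigma_{\max}^2$ (assumption~\textbf{A-\ref{hypo-variance-field}}) and the already-established upper bound on $\tilde t_j(-\rho)$:
$$
\frac{1}{n}\tr \tilde D_i \tilde T(-\rho)
= \frac{1}{n}\sum_{j=1}^n \sigma_{ij}^2\,\tilde t_j(-\rho)
\le \sigma_{\max}^2\cdot \frac{1}{n}\sum_{j=1}^n \frac{1}{\rho}
= \frac{\sigma_{\max}^2}{\rho}.
$$
Plugging this back into the fixed-point formula yields
$$
t_\ell(-\rho) \ge \frac{1}{\rho\bigl(1 + \sigma_{\max}^2/\rho\bigr)} = \frac{1}{\rho+\sigma_{\max}^2},
$$
which is the announced lower bound.

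There is essentially no obstacle here: the proof is entirely mechanical once one realizes that both sides of the coupling in Lemma~\ref{th-deter-approx-details} provide the same trivial $1/\rho$ upper bound via the Stieltjes representation, and that a single substitution then closes the loop for the lower bound.
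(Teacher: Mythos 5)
Your proof is correct and takes essentially the same approach as the paper: both derive the upper bound $t_\ell(-\rho)\le 1/\rho$ from the Stieltjes-transform representation, and both obtain the lower bound by bounding the ``self-energy'' term in the fixed-point formula by $\sigma_{\max}^2$. The only cosmetic difference is that you invoke the coupled $(t_i,\tilde t_j)$ system from Lemma~\ref{th-deter-approx-details} and use $\tilde t_j(-\rho)\le 1/\rho$, whereas the paper works directly from the single-system form \eqref{eq-systeme-approx-deter} and uses $t_\ell(-\rho)\ge 0$ to bound the inner denominator by $1$ — but these two bounds are algebraically identical (since $\tilde t_j(-\rho)\le 1/\rho$ is exactly $1+\tfrac1n\tr D_jT\ge 1$), so the reasoning is the same.
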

\begin{proof}
  Recall that $t_{\ell}(z)\in {\mathcal S}(\Rplus)$ by Theorem
  \ref{first-order}.  In particular, $t_{\ell}(-\rho)=\int_{\Rplus}
  \frac{\mu_{\ell}(d\lambda)}{\lambda+\rho}$ for some probability
  measure $\mu_{\ell}$. This yields the upper bound
  $t_{\ell}(-\rho)\le \rho^{-1}$ and the fact that $t_{\ell}(-\rho)
  \ge 0$. Now the lower bound readily follows from Eq.
  \eqref{eq-systeme-approx-deter}.
\end{proof}

\begin{prop}
\label{lm-minorations}
Let $\rho>0$. Consider a variance profile $(\sigma_{ij})$ which fulfills assumptions
{\bf A-\ref{hypo-variance-field}} and {\bf A-\ref{hypo-inf-trace-Dj}};
consider the related $t_i$'s defined in Theorem \ref{first-order}-(1).
Then:
$$
 \liminf_{n\geq 1}\min_{1\le j\le n}
\frac 1n \tr D_{j} T_n(-\rho)^2 > 0   
\quad \mathrm{and} \quad 
\liminf_{n\geq 1}\min_{1\le j\le n}
\frac{1}{n} \tr D_{j}^2 T_n(-\rho)^2  > 0 \ .  
$$
\end{prop}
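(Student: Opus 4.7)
The plan is to deduce both inequalities from Assumption \textbf{A-\ref{hypo-inf-trace-Dj}} after inserting the uniform lower bound on the $t_i(-\rho)$'s supplied by Proposition \ref{petite-mino}. Since $D_j$ and $T_n(-\rho)$ are diagonal, I would first rewrite the two traces as
$$
\frac 1n \tr D_j T_n(-\rho)^2 \ =\ \frac 1n \sum_{i=1}^N \sigma_{ij}^2\, t_i(-\rho)^2,
\qquad
\frac 1n \tr D_j^2 T_n(-\rho)^2 \ =\ \frac 1n \sum_{i=1}^N \sigma_{ij}^4\, t_i(-\rho)^2,
$$
and invoke the bound $t_i(-\rho)^2 \ge (\rho+\sigma_{\max}^2)^{-2}$, which by Proposition \ref{petite-mino} holds uniformly in $i$ and $n$.

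For the first quantity, factoring out this uniform lower bound reduces the matter to a lower bound on $\tfrac 1n \sum_{i=1}^N \sigma_{ij}^2$, which is precisely the content of Assumption \textbf{A-\ref{hypo-inf-trace-Dj}}. Taking the minimum over $j$ and then the liminf yields a strictly positive lower bound, of at least $\sigma_{\min}^2/(\rho+\sigma_{\max}^2)^2$.

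The second quantity is the only mildly delicate point. One cannot bound $\sigma_{ij}^4$ pointwise below by a multiple of $\sigma_{ij}^2$, since individual entries of the variance profile may vanish, so I would instead use the Cauchy--Schwarz inequality applied to the vectors $(\sigma_{ij}^2)_{i=1}^N$ and the all-ones vector in $\R^N$:
$$
\Big(\sum_{i=1}^N \sigma_{ij}^2 \Big)^2 \ \le\ N\sum_{i=1}^N \sigma_{ij}^4,
$$
equivalently $\tfrac 1n\sum_i \sigma_{ij}^4 \ge \tfrac{n}{N}\bigl(\tfrac 1n\sum_i \sigma_{ij}^2\bigr)^2$. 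Because $N/n \to c \in (0,\infty)$, the factor $n/N$ stays bounded below by a positive constant for $n$ large, and Assumption \textbf{A-\ref{hypo-inf-trace-Dj}} then forces
$$
\liminf_n \min_{1\le j\le n} \frac 1n \tr D_j^2 T_n(-\rho)^2 \ \ge\ \frac{\sigma_{\min}^4}{c\,(\rho+\sigma_{\max}^2)^2} \ >\ 0.
$$
The only real subtlety is recognizing that the quartic-versus-quadratic averaging requires a Cauchy--Schwarz step rather than a termwise comparison; beyond that the argument is a direct substitution.
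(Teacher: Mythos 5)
Your proof is correct and follows essentially the same route as the paper: both insert the uniform lower bound on $t_i(-\rho)$ from Proposition \ref{petite-mino}, reduce the first trace to Assumption \textbf{A-\ref{hypo-inf-trace-Dj}} directly, and handle the quartic sum by the power-mean/Cauchy--Schwarz inequality $\bigl(\tfrac 1N\sum_i \sigma_{ij}^2\bigr)^2 \le \tfrac 1N\sum_i \sigma_{ij}^4$. The paper states this last step as the "elementary inequality $(n^{-1}\sum x_i)^2 \le n^{-1}\sum x_i^2$," which is the same inequality you invoke; the only cosmetic difference is that you carry the $n/N$ normalization factor through explicitly.
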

\begin{proof}
 Applying Proposition \ref{petite-mino} yields:
\begin{equation}
\label{eq-lower-trD2T2} 
\frac 1N \tr D_{j} T(-\rho)^2 = 
\frac 1N \sum_{i=1}^N \sigma^2_{ij} t_i^2(-\rho)
\ge \frac1{ (\rho +\sigma_{\max}^2)^2} \frac 1N \sum_{i=1}^N \sigma^2_{ij}\ ,
\end{equation} 
which is bounded away from zero by Assumption {\bf A-\ref{hypo-inf-trace-Dj}}.
Similarly,
\[ 
\frac 1N \tr D^2_{j} T(-\rho)^2 
\ge \frac1{ (\rho +\sigma_{\max}^2)^2} \frac 1N \sum_{i=1}^N \sigma^4_{ij}
\stackrel{(a)}{\ge}\frac1{ (\rho +\sigma_{\max}^2)^2}
\left(
\frac 1N \sum_{i=1}^N \sigma^2_{ij}
\right)^2\ ,
\] 
which remains bounded away from zero for the same reasons (notice that
$(a)$ follows from the elementary inequality $(n^{-1}\sum x_i)^2 \le
n^{-1} \sum x_i^2$).
\end{proof}
We are now in position to study matrix $A_n = A_n^{(1)}$.
\begin{prop}
\label{lm-properties-A}
Let $\rho>0$. Consider a variance profile $(\sigma_{ij})$ which
fulfills assumptions {\bf A-\ref{hypo-variance-field}} and {\bf
  A-\ref{hypo-inf-trace-Dj}}; consider the related $t_i$'s defined in
Theorem \ref{first-order}-(1) and let $A_n$ be the variance matrix.
Then there exist two $n\times 1$ real vectors $u_n=(u_{\ell n}) \succ
0$ and $v_n=(v_{\ell n}) \succ 0$ such that $u_n =A_n u_n +v_n$.
Moreover,
$$
\sup_n \max_{1\le \ell\le n} (u_{\ell n}) < \infty 
\quad \mathrm{and} \quad 
\liminf_n \min_{1\le \ell\le n} (v_{\ell n}) > 0 \ . 
$$ 
\end{prop}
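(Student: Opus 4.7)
The plan is to make the explicit guess $u_{\ell,n} := \rho\,\ti t_\ell(-\rho)$, where $(\ti t_j)$ is the sequence of deterministic Stieltjes transforms from Lemma \ref{th-deter-approx-details}, and then to verify that the resulting $v_n := u_n - A_n u_n$ is positive and bounded below. The guess is driven by the identity
\[
1 + \tfrac{1}{n}\tr D_\ell T(-\rho) \;=\; \frac{1}{\rho\,\ti t_\ell(-\rho)},
\]
which is exactly what is needed to cancel the denominator in $a_{\ell,m}$, combined with its dual
\[
\tfrac{1}{n}\tr \ti D_i \ti T(-\rho) \;=\; \frac{1}{\rho\,t_i(-\rho)} - 1,
\]
obtained by rewriting the first line of Lemma \ref{th-deter-approx-details}-(a) at $z=-\rho$.

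Substituting these two identities into $[A_n u_n]_\ell = \sum_m a_{\ell m}\,\rho\,\ti t_m$ and regrouping yields, after a short computation,
\[
u_{\ell,n} - [A_n u_n]_\ell \;=\; \rho^2\,\ti t_\ell(-\rho)^2 \Bigl( 1 + \frac{\rho}{n} \tr D_\ell T(-\rho)^2 \Bigr) \;=:\; v_{\ell,n}.
\]
Both $u_n \succ 0$ and $v_n \succ 0$ are immediate since $\ti t_\ell(-\rho) > 0$ and $\tr D_\ell T(-\rho)^2 \geq 0$, and the relation $u_n = A_n u_n + v_n$ holds by construction.

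It remains to establish the two quantitative bounds. Since each $\ti t_\ell(-\rho)$ belongs to $\mathcal{S}(\Rplus)$ evaluated at $z=-\rho$, Proposition \ref{prop-properties-ST} gives $\ti t_\ell(-\rho) \le 1/\rho$, whence $u_{\ell,n} \le 1$ uniformly. For the lower bound on $v_{\ell,n}$ the parenthetical factor is already $\ge 1$, so it suffices to lower-bound $\ti t_\ell(-\rho)$. Transposing the argument of Proposition \ref{petite-mino} to the symmetric fixed-point equation
\[
\ti t_j(-\rho) = \Bigl( \rho + \tfrac{1}{n}\sum_{i=1}^N \tfrac{\sigma^2_{ij}}{1 + \tfrac{1}{n}\sum_k \sigma^2_{ik}\,\ti t_k(-\rho)} \Bigr)^{-1},
\]
which follows by combining the two lines of Lemma \ref{th-deter-approx-details}-(a), one obtains $\ti t_\ell(-\rho) \ge 1/(\rho + (N/n)\sigma_{\max}^2)$, and this stays bounded away from zero since $N/n \to c$. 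The only genuinely delicate step is the initial guess $u_n = \rho\,\ti t$; once it is on the table, everything reduces to the two defining identities of Lemma \ref{th-deter-approx-details}-(a) together with standard Stieltjes-transform bounds. Observe in particular that Proposition \ref{lm-minorations} is not needed here, as the ``1'' inside $v_{\ell,n}$ already suffices.
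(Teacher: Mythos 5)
Your proof is correct, and it is genuinely different from the paper's. The paper obtains $u_n$ and $v_n$ by a complex-analytic passage to the limit: it writes $z = -\rho + \delta\ii$, develops $\im(T(z))$ through the fixed-point equations to obtain a system in $\frac{1}{\delta}\im(\frac 1n\tr D_\ell T)$, and lets $\delta\to 0$, producing $u_{\ell,n}$ with numerator $\frac 1n\sum_i\sigma^2_{i\ell}\int_0^\infty\frac{\mu_i(d\lambda)}{(\lambda+\rho)^2}$ (the derivative $t_i'(-\rho)$ in disguise) and $v_{\ell,n}$ with numerator $\frac 1n\tr D_\ell T^2(-\rho)$. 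You instead guess $u_{\ell,n}=\rho\,\ti t_\ell(-\rho)$ outright, exploit the coupled system of Lemma~\ref{th-deter-approx-details}-(a) at $z=-\rho$ through the two algebraic identities $1+\frac 1n\tr D_\ell T=(\rho\ti t_\ell)^{-1}$ and $\frac 1n\tr\ti D_i\ti T=(\rho t_i)^{-1}-1$, and verify purely algebraically that $v_{\ell,n}=\rho^2\ti t_\ell^2\bigl(1+\frac{\rho}{n}\tr D_\ell T^2\bigr)$. I checked the computation; it goes through. Your approach trades the imaginary-part limiting argument for a direct substitution that makes explicit use of the $T$--$\ti T$ duality, and it is more elementary.

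The concluding observation is worth highlighting: your $v_{\ell,n}\geq\rho^2\ti t_\ell^2$, and the lower bound on $\ti t_\ell(-\rho)$ uses only {\bf A-\ref{hypo-variance-field}} and the boundedness of $N/n$, so your construction establishes the proposition without invoking {\bf A-\ref{hypo-inf-trace-Dj}} (nor Proposition~\ref{lm-minorations}). The paper's $v_{\ell,n}$ has numerator $\frac 1n\tr D_\ell T^2$, which could degenerate without {\bf A-\ref{hypo-inf-trace-Dj}}, which is precisely why the paper invokes that hypothesis at this step. This is a modest but genuine strengthening; the assumption is of course still needed elsewhere (e.g.\ in the lower bound of Lemma~\ref{system-aux}-(2)), so the hypotheses on the proposition cannot be dropped globally, but it is a cleaner dependency structure.
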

\begin{proof}
Let $z = - \rho + \delta \ii$ with $\delta \in \R - \{ 0 \}$.  
An equation involving matrix $A_n$ will show up by 
developing the expression of $\im(T(z)) = \left( T(z) - T^*(z) \right)
/ 2 \ii$ and by using the expression of the $t_i(z)$'s given by Theorem
\ref{first-order}-(1). We first rewrite the system (\ref{eq-systeme-approx-deter}) as: 
$$
T(z) = 
\left( - z I_N + 
\frac{1}{n} \sum_{m=1}^n 
\frac{D_m}{1+\frac 1n \tr D_m T} 
\right)^{-1} \ . 
$$
We then have 
\begin{eqnarray*}
\im(T) &=& 
\frac{1}{2\ii}\left(T - T^* \right) \ = \ 
\frac{1}{2\ii} T T^* \left( {T^*}^{-1} - T^{-1} \right)\ , \\
&=&
\frac 1n \sum_{m=1}^n 
\frac{ D_m T T^*}
{\left| 1 + \frac 1n \tr D_m T \right|^2} 
 \im\left( \frac 1n \tr D_m T \right) 
+  \delta T T^*  \ . 
\end{eqnarray*} 
This yields in particular, for any $1\le \ell\le n$:
\begin{equation}\label{eq-system}
\frac 1\delta \im\left(\frac 1n \tr D_{\ell} T \right) = 
\frac{1}{n^2} \sum_{m=1}^n 
\frac{\tr D_{\ell} D_m T T^*} 
{\left| 1 + \frac 1n \tr D_m T \right|^2} 
\frac 1\delta \im\left( \frac 1n \tr D_m T \right) 
+ \frac1n \tr D_{\ell} T T^* \ . 
\end{equation} 
Recall that for every $1\le i\le N$, $t_i(z)\in {\mathcal S}(\Rplus)$.
Denote by $\mu_i$ the probability measure associated with $t_i$ {\em
  i.e.} $t_i(z)=\int_{\Rplus} \frac{\mu_i(d\lambda)}{\lambda-z}$.
Then
\[
\frac 1\delta \im\left(\frac 1n \tr D_{\ell} T \right) = 
\frac 1n \sum_{i=1}^N \sigma^2_{i\ell} 
\int_0^\infty \frac{\mu_i(d\lambda)}{| \lambda - z |^2}  \xrightarrow[\delta\rightarrow 0]{} 
\frac 1n \sum_{i=1}^N \sigma^2_{i\ell}  \int_0^\infty \frac{\mu_i(d\lambda)}{( \lambda + \rho)^2}  \ .
\]
Denote by $\tilde u_{\ell n}$ the right handside of the previous limit
and let $u_{\ell n}= \frac{\tilde u_{\ell n}}{(1+\frac 1n \tr D_{\ell}
  T(-\rho))^2}$. Plugging this expression into
\eqref{eq-system} and letting $\delta \to 0$, we end up
with equation:
$$
u_n = A_n u_n +v_n
$$
$A_n \succcurlyeq 0$ is given in the statement of the lemma, 
$u_n = ( u_{\ell,n}; 1 \leq \ell \leq n)$ and 
$v_n = ( v_{\ell n}; 1 \leq \ell \leq n)$ 
are the $n \times 1$ vectors with elements 
\begin{equation}
\label{eq-u-v} 
u_{\ell,n} = \frac{
\frac 1n \sum_{i=1}^N \sigma^2_{i\ell}
\int_0^\infty \frac{\mu_i(d\lambda) }{( \lambda + \rho )^2} }{\left( 1 + \frac1n \tr D_{\ell} T(-\rho)\right)^2}
\quad \mathrm{and} \quad 
v_{\ell,n} =  \frac{\frac 1n \tr D_{\ell} T^2(-\rho)}{\left( 1 +\frac 1n \tr D_{\ell} T(-\rho) \right)^2} \ .
\end{equation} 
It remains to notice that $u_n \succ 0$ and $v_n \succ 0$ for $n$
large enough due to {\bf A-\ref{hypo-inf-trace-Dj}}, that the
numerator of $u_{\ell,n}$ is lower than $(N \sigma_{\max}^2) / (n
\rho^2)$ and that its denominator is bounded away from zero (uniformly
in $n$) by Propositions \ref{petite-mino} and \ref{lm-minorations}.
Similar arguments hold to get a uniform upper bound for $v_{\ell,n}$.
This concludes the proof of Proposition \ref{lm-properties-A}.
\end{proof}

\subsection{Proof of Lemma \ref{system-aux}: End of proof}

\begin{proof}[Proof of Lemma \ref{system-aux}-(1)]
Proposition \ref{lm-properties-A} together with Lemma \ref{lm-(I-A)regular}-(4) yield that $I- A^{0,(j)}$ is invertible, 
therefore the system \ref{eq-x=Ax+v} admits a unique solution given by:
$$
\bs{y_n^{(j)}}= (I-A_n^{0,(j)})^{-1}\bs{d_n^{(j)}} \ ,
$$
and (1) is proved.
\end{proof}

\begin{proof}[Proof of Lemma \ref{system-aux}-(2)]
Let us first prove the upper bound. 
Proposition \ref{lm-properties-A} together with Lemma \ref{lm-(I-A)regular} yield
\begin{eqnarray*}
\limsup_{n}\max_{j} \leftrownorm ( I - A^{0,(j)} )^{-1} \rightrownorm_\infty
&\le& 
\limsup_{n\ge 1} \frac{\max_{1\le \ell\le n} (u_{\ell n})}
{\min_{1\le \ell\le n} (v_{\ell n})} \quad <\quad \infty\ .
\end{eqnarray*}
Each component of vector $\bs {d_n^{(j)}}$ satisfies 
$\bs{d_{\ell,n}^{(j)}} \leq  \frac{N \sigma_{\max}^4}{ n^2 \rho^2}$
{\em i.e.} $\sup_{1\le j\le n} \| \bs{d_n^{(j)}} \|_\infty < \frac{K}n$. Therefore, 
vector $\bs{y_n^{(j)}}$ satisfies:
$$
\sup_{j} \| \bs{y_n^{(j)}} \|_\infty \leq 
\sup_{j} |\!|\!| ( I - A_n^{0,(j)} )^{-1} |\!|\!|_\infty 
\| \bs{d_n^{(j)}} \|_\infty  < \frac Kn \ .
$$ 
Consequently,
$$
0\ \le\  \check{\mathcal V}_n = \sum_{j=1}^n \bs{\check{y}_{j,n}^{(j)}}
\ \le\  \sum_{j=1}^n \| \bs{\check{y}_n^{(j)}} \|_{\infty}
$$ 
satisfies $\limsup_n \check{\mathcal V}_n <\infty$. Moreover, Proposition \ref{petite-mino} yields
${\mathcal W}_n \leq n^{-2} \sum_{j=1}^n \tr D_j^2 T^2 \leq
\sigma_{\max}^4 N (\rho^2 n)^{-1}$. In particular, ${\mathcal W}_n$   is also bounded and $\limsup_n
(\check{\mathcal V}_n +\kappa {\mathcal W}_n) \le \limsup_n(\check{\mathcal V}_n
+|\kappa| {\mathcal W}_n) < \infty$.

We now prove the lower bound.
$$
\check{\mathcal V}_n +\kappa {\mathcal W}_n =\sum_{j=1}^n \bs{y_{j,n}^{(j)}} +\kappa \bs{d^{(j)}_{j,n}}
\ge \sum_{j=1}^n \bs{y_{j,n}^{(j)}} - \bs{d^{(j)}_{j,n}}\ .
$$
Recall that $\bs{y^{(j)}_n} =(I-A^{0,(j)})^{-1} \bs{d_n^{(j)}}$. We therefore have:
\begin{eqnarray*}
\bs{y_{j,n}^{(j)}} - \bs{d^{(j)}_{j,n}} &=& \left[ \bs{y_{n}^{(j)}} - \bs{d^{(j)}_{n}}\right]_1
\quad =\quad  \left[ \left( (I-A_n^{0,(j)})^{-1} -I\right)\bs{d_n^{(j)}}\right]_1\\
& = & \left[ (I-A_n^{0,(j)})^{-1} A_n^{0,(j)} \bs{d_n^{(j)}}\right]_1\ . 
\end{eqnarray*}
As $(I-A_n^{0,(j)})^{-1} \succcurlyeq I$, we have:
\begin{eqnarray*}
\bs{y_{j,n}^{(j)}} - \bs{d^{(j)}_{j,n}} &\ge & \left[ A_n^{0,(j)} \bs{d_n^{(j)}}\right]_1
\quad =\quad \sum_{\ell =j+1}^n \frac 1{n^2} \frac{\left( \frac 1n \tr D_{\ell} D_j T^2(-\rho)\right)^2}
{\left( 1 +\frac 1n \tr D_j T(-\rho)\right)^4}\\
&\stackrel{(a)}{\ge} & K \sum_{\ell =j+1}^n \frac 1{n^2} \left( \frac 1n \tr D_{\ell} D_j\right)^2,
\end{eqnarray*}
where $(a)$ follows from Proposition \ref{petite-mino}, which is used
both to get a lower bound for the numerator 
and an upper bound for the denominator:
$(1+\frac 1n \tr D_j T)^4 \le (1 + Nn^{-1} \sigma_{\max}^2
\rho^{-1})^4$. Some computations remain to be done in order to take
advantage of {\bf A-\ref{hypo-inf-trace-Dj}} to get the lower bound.
Recall that $\frac 1m \sum_{k=1}^m x_k^2 \ge \left( \frac 1m
  \sum_{k=1}^m x_k\right)^2$. We have:
\begin{eqnarray*}
  \sum_{j=1}^n \bs{y^{(j)}_{j,n}} - \bs{d^{(j)}_{j,n}} &\ge & \sum_{j=1}^n \sum_{\ell = j+1}^n \frac 1{n^2} \left( \frac 1n \tr D_{\ell} D_j\right)^2 \\
  &=& \frac 1{n^2} \times \frac{n(n-1)}{2}\times\frac{2}{n(n-1)} \sum_{j< \ell} \left( \frac 1n \tr D_{\ell} D_j \right)^2\\
  &\stackrel{(a)}{\ge}& \frac 13 \left( \frac 2{n(n-1)} \sum_{j<\ell} \frac 1n \tr D_{\ell} D_j \right)^2\\
  &\stackrel{(b)}{=} & \frac 13 \left( \frac 1{n(n-1)} \sum_{1\le j,\ell\le n} 
  \frac 1n \tr D_{\ell} D_j \right)^2  + o(1)\\ 
&\ge& \frac 13 \left( \frac 1{n^3} \sum_{i=1}^N \left( \sum_{j=1}^n \sigma_{ij}^2 \right)^2 \right)^2 + o(1)\\
&\ge& \frac 13 \left( \frac N{n^3} \left( \frac 1N \sum_{i=1}^N \sum_{j=1}^n \sigma_{ij}^2 \right)^2 \right)^2 + o(1)\\
&\ge& \frac 13 \left( \frac N{n^3} \left( \sum_{j=1}^n \frac 1N \sum_{i=1}^N \sigma_{ij}^2 \right)^2 \right)^2 +o(1)
\end{eqnarray*}
where $(a)$ follows from the bound $\frac{n(n-1)}{2n^2} \ge \frac
13$ valid for $n$ large enough.  The term $o(1)$ at step $(b)$ goes to zero
as $n\rightarrow \infty$ and takes into account the diagonal terms in
the formula $2 \sum_{j< \ell} \alpha_{j\ell} +\sum_j \alpha_{jj}=
\sum_{j,\ell} \alpha_{j\ell}$. It remains now to take the $\liminf$ to
obtain:
$$
\liminf_{n\rightarrow \infty } \left( \sum_{j=1}^n \bs{y^{(j)}_{j,n}} +\kappa  \bs{d^{(j)}_{j,n}}\right)
\ge \frac {c^2 \sigma_{\min}^8 }3\ .
$$ 
Item (2) is proved.
\end{proof}

\begin{proof}[Proof of Lemma \ref{system-aux}-(3)]
We first introduce the following block-matrix notations:
$$
A_n^{(j)}=
\left(
\begin{array}{cc}
\bs{d_{j,n}^{(j)}} & \bar{a}_n^{(j)} \\
\bs{\bar{d}_{n}^{(j)}} & A_n^{(j+1)} \\
\end{array}
\right)\quad \textrm{and}\quad 
A_n^{0,(j)}=
\left(
\begin{array}{cc}
0 & \bar{a}_n^{(j)} \\
0 & A_n^{(j+1)} \\
\end{array}
\right)\ .
$$
We can now express the following inverse:  
$$
(I-A_n^{0,(j)})=
\left(
\begin{array}{cc}
1 & - \bar{a}_n^{(j)}\\
0 & (I-A_n^{(j+1)}) 
\end{array}\right)
\quad \textrm{as}\quad 
(I-A_n^{0,(j)})^{-1}=
\left(
\begin{array}{cc}
1 & \bar{a}_n^{(j)} (I-A_n^{(j+1)})^{-1}\\
0 & (I-A_n^{(j+1)})^{-1} 
\end{array}\right)\ .
$$
This in turn yields
$
\bs{y_{j,n}^{(j)}}= \bs{d_{j,n}^{(j)}} + \bar{a}_n^{(j)} (I-A_n^{(J+1)})^{-1}\bs{\bar{d}_{n}^{(j)}}
$ and one can easily check that $\bs{y_{j,n}^{(j)}}\le \frac Kn$, where $K$ does not depend on $j$ and $n$, as 
$$
|\bs{y_{j,n}^{(j)}}| \le |\bs{d_{j,n}^{(j)}}| + n\|
\bar{a}_n^{(j)}\|_{\infty} \leftrownorm (I-A_n^{0,(j)})^{-1}
\rightrownorm_{\infty} \| \bs{\bar{d}_{n}^{(j)}}\|_{\infty}\ .
$$
Remark that 
\begin{eqnarray*}
\lefteqn{\log\det(I-A_n^{(j)}) -\log\det(I-A_n^{(j+1)} )}\\
&=& \log\det\left(
\left[
\begin{array}{cc}
1-\bs{d_{j,n}^{(j)}} & -\bar{a}_n^{(j)}\\
-\bs{\bar{d}_{n}^{(j)}} & I - A_n^{(j+1)}
\end{array}\right]
\left[
\begin{array}{cc}
1 & 0\\
0 & (I-A_n^{(j+1)})^{-1}
\end{array}\right]\right)\\
&=& \log \det \left[ 
\begin{array}{cc}
1-\bs{d_{j,n}^{(j)}} & -\bar{a}_n^{(j)}(I-A_n^{(j+1)})^{-1}\\
-\bs{\bar{d}_{n}^{(j)}} & I
\end{array}
\right]\\
&=& \log\left(1- \bs{d_{j,n}^{(j)}} -\bar{a}_n^{(j)}(I-A_n^{(j+1)})^{-1}\bs{\bar{d}_{n}^{(j)}}\right)\ 
\end{eqnarray*}
and write $\log \det (I-A_n)$ as:
\begin{eqnarray*}
  \log \det (I-A_n) &=&\sum_{j=1}^{n-1} \left(\log \det (I-A_n^{(j)}) -\log\det(I-A_n^{(j+1)})\right) +\log(1-a_{nn})\\
  &=& \sum_{j=1}^{n-1} \log \left( 1- \bs{d_{j,n}^{(j)}} -\bar{a}_n^{(j)}(I-A_n^{(j+1)})^{-1}\bs{\bar{d}_{n}^{(j)}}\right)
+\log(1-a_{nn})\\
  &=& - \sum_{j=1}^{n-1} \left( \bs{d_{j,n}^{(j)}} + \bar{a}_n^{(j)}(I-A_n^{(j+1)})^{-1}\bs{\bar{d}_{n}^{(j)}}\right) + o\left(1\right)\\
  &=& - \sum_{j=1}^{n-1} \bs{y^{(j)}_{j,n}} + o\left(1\right)\quad 
=\quad -\sum_{j=1}^{n} \bs{y^{(j)}_{j,n}} + o\left(1\right)\\
 &=& - \check{\mathcal V}_n +o(1)\ .
\end{eqnarray*}
This concludes the proof of Lemma \ref{system-aux}.

\end{proof}

\section{Proof of Theorem \ref{th-clt}}
\label{proof-clt}
\subsection{More notations; outline of the proof; key lemmas}
\label{sec-outline-CLT}
\subsubsection*{More notations}
Recall that $Y_n=(Y_{ij}^n)$ is a $N\times n$ matrix where $Y_{ij}^n
=\frac{\sigma_{ij}}{\sqrt{n}} X_{ij}$ and that
$Q_n(z)=(q_{ij}(z)\,)=(Y_n Y_n^* -zI_N)^{-1}$. We denote
\begin{enumerate}
\item by $\ti Q_n(z)=(\ti q_{ij}(z)\,)=(Y_n^* Y_n -zI_n)^{-1}$, 
\item by $y_j$ the column number $j$ of $Y_n$,
\item by $Y_n^j$ the $N\times (n-1)$ matrix that remains after deleting 
column number $j$ from $Y_n$,
\item by $Q_{j,n}(z)$ (or $Q_j(z)$ for short when there is no confusion with
$Q_n(z)$) the $N \times N$ matrix 
$$
Q_j(z)=(Y^j Y^{j\, *} -zI_N)^{-1},
$$
\item by $\xi_i$ the row number $i$ of $Y_n$,
\item by $Y_{i,n}$ (or $Y_i$ for short when there is no confusion with $Y_n$) 
the $(N-1)\times n$ matrix that remains after deleting row $i$ from $Y$,
\item by $\ti Q_{i,n}(z)$ (or $\ti Q_i(z)$) the $n \times n$ matrix 
$$
\ti Q_i(z)=(Y_i^* Y_i -zI_n)^{-1}.
$$
\end{enumerate} 
Recall that we use both notations $q_{ij}$ or $[Q]_{ij}$ for the
individual element of $Q(z)$ depending on the context (same for other
matrices). The following formulas are well-known (see for instance
Sections 0.7.3 and 0.7.4 in \cite{HorJoh94}):
\begin{equation}
\label{inversion-lemma} 
Q = Q_j -\frac{Q_j y_j y_j^* Q_j}{1+y_j^* Q_j y_j}, \quad
\ti Q = \ti Q_i -\frac{\ti Q_i \xi_i^* \xi_i \ti Q_i}
{1+  \xi_i \ti Q_i \xi_i^*}
\end{equation} 
\begin{equation} 
\label{eq-qii} 
q_{ii}(z) = \frac{-1}{z(1+\xi_i \ti Q_i(z) \xi_i^*)}, \quad 
\ti q_{jj}(z) = \frac{-1}{z(1+y_j^* Q_j(z) y_j)}. 
\end{equation}

For $1\le j\le n$, denote by ${\mathcal F}_j$ the $\sigma$-field
${\mathcal F}_j = \sigma(y_j, \cdots, y_n)$ 
generated by the random vectors $(y_j,\cdots,y_n)$. Denote by
$\E_j$ the conditional expectation with respect to ${\mathcal
  F}_j$, i.e. $\E_j=\E(\cdot\mid {\mathcal F_j})$. By
  convention, ${\mathcal F}_{n+1}$ is the trivial $\sigma$-field; in particular,
$\E_{n+1}=\E$. 
\subsubsection*{Outline of the proof}
In order to prove the convergence of $\Theta_n^{-1} (\log \det (Y_n Y_n^* + \rho I_N) -
\E \log \det (Y_n Y_n^* + \rho I_N))$ toward the
standard gaussian law ${\mathcal N}(0,1)$, we shall rely on the
following CLT for martingales:
\begin{theo}[CLT for martingales, Th. 35.12 in \cite{Bil95}] 
\label{th-clt-martingales}
Let $\gamma^{(n)}_n, \gamma^{(n)}_{n-1}, \ldots, \gamma^{(n)}_1$ be a 
martingale difference sequence with respect to the increasing filtration 
${\mathcal F}^{(n)}_n, \ldots, {\mathcal F}^{(n)}_1$. 
Assume that there exists a sequence of real positive numbers 
${\Theta}_n^2$ such that 
\begin{equation}
\label{eq-cond-variance-clt} 
\frac{1}{\Theta_n^2}
\sum_{j=1}^n \E_{j+1} {\gamma_j^{(n)}}^2 \cvgP{n\to\infty} 1 \ .
\end{equation} 
Assume further that the Lindeberg condition holds: 
$$
\forall \epsilon > 0, \ 
\frac{1}{\Theta_n^2}
\sum_{j=1}^n \E\left( {\gamma_j^{(n)}}^2 
{\bf 1}_{\left| \gamma_j^{(n)} \right| \geq \epsilon \Theta_n } \right) 
\xrightarrow[n\rightarrow \infty]{} 0\ .
$$
Then $\Theta_n^{-1} \sum_{j=1}^n \gamma^{(n)}_j $ converges in 
distribution to 
${\mathcal N}(0,1)$. 
\end{theo}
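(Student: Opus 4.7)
The plan is to apply L\'evy's continuity theorem: writing $X_{nj} := \gamma_j^{(n)}/\Theta_n$ and $S_n := \sum_{j=1}^n X_{nj}$, it suffices to show $\E[\exp(itS_n)] \to \exp(-t^2/2)$ for every fixed $t \in \R$. Two preliminary reductions pave the way. First, the Lindeberg condition combined with the $L^2$-orthogonality of the martingale differences $X_{nj}^2 - \E_{j+1}[X_{nj}^2]$ (Chebyshev applied after orthogonality) implies both $\max_{1\le j \le n}|X_{nj}| \cvgP{n} 0$ and $\sum_{j=1}^n X_{nj}^2 \cvgP{n} 1$ (the latter via \eqref{eq-cond-variance-clt}). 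Second, one chooses $\epsilon_n \downarrow 0$ compatible with Lindeberg and replaces $X_{nj}$ by the centered truncation $X'_{nj} := X_{nj}\mathbf{1}_{|X_{nj}|\le\epsilon_n} - \E_{j+1}[X_{nj}\mathbf{1}_{|X_{nj}|\le\epsilon_n}]$; the $X'_{nj}$ form a martingale difference array with $|X'_{nj}|\le 2\epsilon_n$, $\E|S_n - \sum_j X'_{nj}|^2 \to 0$, and the same limiting properties are inherited. We may therefore assume $|X_{nj}|\le 2\epsilon_n$ throughout.

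The heart of the proof is the Dvoretzky--McLeish identity. From $\log(1+iu) = iu + u^2/2 + r(u)$ with $|r(u)|\le C|u|^3$ for $|u|\le 1$, one deduces that for $n$ large (so $|tX_{nj}|\le 1$),
\begin{equation*}
\exp(itS_n) = M_n(t) \cdot A_n,\qquad M_n(t) := \prod_{j=1}^n (1+itX_{nj}),\quad A_n := \exp\!\left(-\frac{t^2}{2}\sum_{j=1}^n X_{nj}^2 - \sum_{j=1}^n r(tX_{nj})\right).
\end{equation*}
Two observations make this useful. First, $M_n(t)$ is a complex-valued martingale with respect to the given filtration, because $\E_{j+1}[1+itX_{nj}] = 1$; iterated conditioning from the finest to the coarsest $\sigma$-field yields $\E[M_n(t)] = 1$. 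Second, $|\sum_j r(tX_{nj})| \le C|t|^3 \max_j|X_{nj}|\cdot \sum_j X_{nj}^2 \cvgP{n} 0$, and combined with $\sum_j X_{nj}^2 \cvgP{n} 1$ this gives $A_n \cvgP{n} \exp(-t^2/2)$.

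The main obstacle is upgrading $\E[\exp(itS_n)] = \E[M_n(t) A_n]$ to $\E[M_n(t)] \cdot \exp(-t^2/2) = \exp(-t^2/2)$: the probabilistic limit of $A_n$ must be passed through the complex-valued factor $M_n(t)$. Under the truncation, a short computation with $\Re r(u) = -u^4/4 + O(u^6)$ shows $|A_n|\le 1$ for $n$ large, since $\Re \log A_n = -\tfrac{t^2}{2}\sum X_{nj}^2 + O(t^4 \epsilon_n^2 \sum X_{nj}^2) \le 0$. Moreover, the pointwise bound $|M_n(t)|^2 = \prod_j (1 + t^2 X_{nj}^2) \le \exp(t^2 \sum_j X_{nj}^2)$ combined with an exponential moment estimate on $\sum_j X_{nj}^2$---derived inductively from $|X_{nj}|\le 2\epsilon_n$ and the conditional variance control via $\E_{j+1}[\exp(t^2 X_{nj}^2)] \le 1 + t^2 \E_{j+1}[X_{nj}^2](1+o(1))$---yields $\sup_n \E|M_n(t)|^2 < \infty$. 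The Cauchy--Schwarz inequality and dominated convergence then give $\E[M_n(t)(A_n - \exp(-t^2/2))] \to 0$, completing the proof. The trickiest technical point is establishing the exponential moment bound on $\sum_j X_{nj}^2$ in the truncated regime, since this quadratic variation is only controlled in probability, not almost surely or in $L^\infty$.
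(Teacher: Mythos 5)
The paper itself offers no proof of this statement: it is imported wholesale from Billingsley (Theorem 35.12 of \cite{Bil95}), so there is nothing internal to compare against. Your strategy is precisely the classical one underlying that cited result: the factorization $e^{itS_n}=M_n(t)A_n$ with the product martingale $M_n(t)=\prod_j(1+itX_{nj})$ satisfying $\E M_n(t)=1$, together with $A_n\to e^{-t^2/2}$ in probability. The preliminary reductions (centred truncation at level $\epsilon_n$, $\max_j|X_{nj}|\cvgP{}0$ from Lindeberg, $|A_n|\le 1$ after truncation, control of the remainder $\sum_j r(tX_{nj})$) are all sound.

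The gap is exactly at the point you flag, and the fix you propose does not close it. You need $\sup_n\E|M_n(t)|^2<\infty$, and your route is $|M_n(t)|^2\le\exp(t^2\sum_jX_{nj}^2)$ plus an ``inductive'' bound built from $\E_{j+1}[\exp(t^2X_{nj}^2)]\le 1+Ct^2\,\E_{j+1}[X_{nj}^2]$. Iterating that conditional inequality does not yield a constant: it yields $\E\bigl[\prod_j(1+Ct^2\sigma_{nj}^2)\bigr]\le\E\bigl[\exp\bigl(Ct^2\sum_j\sigma_{nj}^2\bigr)\bigr]$, where $\sigma_{nj}^2=\E_{j+1}[X_{nj}^2]$, i.e.\ you need an exponential moment of the total conditional variance. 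Hypothesis \eqref{eq-cond-variance-clt} gives only convergence in probability of $\sum_j\sigma_{nj}^2$, which controls no moment at all: on an event of probability $1/n$ the conditional variance sum may be of order $n$, so $\E[\exp(Ct^2\sum_j\sigma_{nj}^2)]$ can diverge while \eqref{eq-cond-variance-clt} and Lindeberg both hold. The missing idea is the stopping-time truncation of the conditional-variance process. With the paper's decreasing-index filtration, $V_j:=\sum_{k=j}^n\sigma_{nk}^2$ is $\mathcal{F}_{j+1}$-measurable, so $X''_{nj}:=X'_{nj}\mathbf{1}_{\{V_j\le 2\}}$ is still a martingale difference array; its conditional variances sum to at most $2+\max_j\sigma_{nj}^2\le 2+4\epsilon_n^2$ almost surely, and it coincides with $(X'_{nj})_j$ except on the event $\{V_1>2\}$, whose probability tends to $0$ by \eqref{eq-cond-variance-clt}. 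Only after this reduction does the supermartingale iteration $\E_{j+1}[1+t^2X_{nj}''^2]=1+t^2\sigma_{nj}''^2\le\exp(t^2\sigma_{nj}''^2)$ telescope to $\E|M_n(t)|^2\le e^{3t^2}$, and the rest of your argument (Cauchy--Schwarz and bounded convergence on $A_n-e^{-t^2/2}$) then goes through. Note the same circularity infects your earlier claim that orthogonality plus Chebyshev gives $\sum_jX_{nj}^2\cvgP{}1$: the bound $\sum_j\E[X_{nj}^4]\le 4\epsilon_n^2\,\E[\sum_j\sigma_{nj}^2]$ again requires a moment bound on $\sum_j\sigma_{nj}^2$ that only the stopping time provides.
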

\begin{rem} The following condition:
\begin{equation}
\label{eq-lyapounov}
\exists \delta > 0, \quad
\frac{1}{\Theta_n^{2(1+\delta)}}
\sum_{j=1}^n \E\left| {\gamma_j^{(n)}} \right|^{2+\delta} 
\xrightarrow[n\rightarrow \infty]{} 0\ ,
\end{equation} 
known as Lyapounov's condition implies Lindeberg's condition and is easier to establish 
(see for instance \cite{Bil95}, Section 27, page 362).
\end{rem}
The proof of the CLT will be carried out following three steps:
\begin{enumerate}
\item We first show that 
$\log \det (Y_n Y_n^* + \rho I) -
\E \log \det (Y_n Y_n^* + \rho I)$
can be written as 
$$
\log \det (Y_n Y_n^* + \rho I) -
\E \log \det (Y_n Y_n^* + \rho I)=\sum_{j=1}^n \gamma_j,
$$
where $(\gamma_j)$ is a martingale difference sequence. 
\item We then prove that $(\gamma_j)$ satisfies Lyapounov's condition 
(\ref{eq-lyapounov}) where $\Theta_n^2$ is given by
Theorem \ref{th-variance}. 
\item We finally prove (\ref{eq-cond-variance-clt}) which implies the CLT.
\end{enumerate}
\subsubsection*{Key Lemmas} The two lemmas stated below will be of constant use in the sequel.
The first lemma describes the asymptotic
behaviour of quadratic forms related to random matrices.

\begin{lemma}
\label{lm-approx-quadra}
Let $\bs{x}=(x_1,\cdots, x_n)$ be a $n \times 1$ vector where the $x_i$
are centered i.i.d.~complex random variables with unit variance. 
Let $M$ be a $n\times n$ 
deterministic complex matrix. 
\begin{enumerate}
\item (Bai and Silverstein, Lemma 2.7 in \cite{BaiSil98})  \label{silverstein} Then, for any $p\ge 2$, there exists a
constant $K_p$ for which 
$$
\E|\bs{x}^* M \bs{x} -\tr M|^p
\le K_p \left( \left( \E|x_1|^4 \tr M M^* \right)^{p/2} 
+ \E |x_1|^{2p} \tr (M M^*)^{p/2}
\right)\ .
$$
\item \label{identite-cumu} (see also Eq. (1.15) in \cite{BaiSil04})
  Assume moreover that $\mathbb{E}\, x_1^2 =0$ and that $M$ is real,
  then
$$
\mathbb{E}\left( \bs{x}^* M \bs{x} -\tr M \right)^2 = \tr M^2 +\kappa \sum_{i=1}^n m_{ii}^2\ ,
$$
where $\kappa= \mathbb{E}|x_1|^4-2$.
\end{enumerate}
\end{lemma}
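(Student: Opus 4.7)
The plan is to handle the two parts separately, noting that both are classical and could equally well be invoked from the cited references. Part (1) is the Bai--Silverstein concentration inequality (Lemma~2.7 of \cite{BaiSil98}); the route to prove it from scratch is to split
\begin{equation*}
\bs{x}^* M \bs{x} - \tr M \;=\; \sum_i m_{ii}(|x_i|^2-1) \;+\; \sum_{i\ne j} m_{ij}\bar x_i x_j,
\end{equation*}
and estimate the two contributions separately. The diagonal piece is a sum of independent centered random variables controlled by Rosenthal's inequality in terms of $\E|x_1|^{2p}$ and the $\ell^2$- and $\ell^p$-norms of the diagonal of $M$, both of which are dominated by $\tr(MM^*)^{p/2}$. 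The off-diagonal piece is, relative to the natural filtration generated by $x_1,\ldots,x_n$, a sum of martingale differences; Burkholder's inequality reduces its $p$-th moment to the $(p/2)$-th moment of its conditional quadratic variation, which is again a quadratic form of the same type. Iterating once produces the two terms on the right-hand side of (1).

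Part (2) is a direct second-moment computation under the specific hypotheses $\E x_1=0$, $\E|x_1|^2=1$, and $\E x_1^2=0$. I would write $\bs{x}^*M\bs{x}-\tr M = A + B$ with $A=\sum_i m_{ii}(|x_i|^2-1)$ and $B=\sum_{i\ne j} m_{ij}\bar x_i x_j$, then compute $\E A^2$, $\E AB$, and $\E B^2$ in turn. Independence and $\E(|x_i|^2-1)=0$ give $\E A^2 = \sum_i m_{ii}^2(\E|x_1|^4 - 1)$. The cross term $\E AB$ vanishes: in every summand an expectation of the form $\E(|x_i|^2-1)\bar x_i$ or $\E x_k$ factors out and is zero by centering. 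For $\E B^2 = \sum_{i\ne j,\,k\ne l} m_{ij}m_{kl}\,\E[\bar x_i x_j\bar x_k x_l]$, a case analysis on the coincidences among $i,j,k,l$ shows that only the pairing $(i=l,\,j=k)$ contributes, giving $\sum_{i\ne j} m_{ij}m_{ji}$; the pairing $(i=k,\,j=l)$ would contribute $|\E x_1^2|^2\sum_{i\ne j}m_{ij}^2$ but is killed precisely by $\E x_1^2=0$, and all other configurations vanish by independence and centering.

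Combining the three pieces yields
\begin{equation*}
\E(\bs{x}^*M\bs{x}-\tr M)^2 \;=\; \sum_i m_{ii}^2(\E|x_1|^4-1)\;+\;\sum_{i\ne j}m_{ij}m_{ji},
\end{equation*}
and the trivial identity $\tr M^2 = \sum_i m_{ii}^2 + \sum_{i\ne j}m_{ij}m_{ji}$ rearranges this as $\tr M^2 + \kappa\sum_i m_{ii}^2$ with $\kappa=\E|x_1|^4-2$, as claimed. The reality of $M$ plays no essential role in the identity itself — it is recorded because the lemma will be applied to real matrices derived from the variance profile — so the only genuine subtlety is recognising that the hypothesis $\E x_1^2=0$ is exactly what suppresses the extra term $|\E x_1^2|^2\sum_{i\ne j}m_{ij}^2$ in $\E B^2$. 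There is no real obstacle in either part: the work is careful bookkeeping in the index expansion for part (2) and standard martingale machinery for part (1).
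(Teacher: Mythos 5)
The paper does not prove this lemma; it cites both parts from the literature (Bai and Silverstein, Lemma 2.7 in \cite{BaiSil98} for part (1), and Eq.\ (1.15) in \cite{BaiSil04} for part (2)), so there is no ``paper proof'' to compare against. Your proposed argument is the standard one and is correct. For part (1), your sketch (split into diagonal and off-diagonal contributions, Rosenthal on the diagonal, Burkholder on the off-diagonal martingale, iterate once) is exactly how the Bai--Silverstein inequality is established; it is compressed but the plan is sound. For part (2), your index bookkeeping is right: $\E A^2 = (\E|x_1|^4-1)\sum_i m_{ii}^2$, $\E AB = 0$, $\E B^2 = \sum_{i\ne j} m_{ij}m_{ji}$ (with the pairing $i=k,\,j=l$ suppressed by $\E x_1^2=0$), and $\tr M^2 = \sum_i m_{ii}^2 + \sum_{i\ne j}m_{ij}m_{ji}$ assembles the claim with $\kappa=\E|x_1|^4-2$. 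Your remark that the reality of $M$ is inessential — only $\E x_1^2 = 0$ is needed to kill $|\E x_1^2|^2\sum_{i\ne j}m_{ij}^2$ — is also correct. One small phrasing caveat: in your discussion of $\E AB$, the factor that vanishes by centering is always $\E x_l$ (or $\E\bar x_k$) for the index not tied to $i$, not the third-moment $\E[(|x_i|^2-1)\bar x_i]$, which need not be zero; the conclusion $\E AB=0$ is still right because that harmless factor always appears.
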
 
As a consequence of the first part of this lemma, there exists a
constant $K$ independent of $j$ and $n$ for which
\begin{equation}
\label{eq-approx-yi*Q*yi} 
\E\left| y_j^* Q_j(-\rho) y_j  - 
\frac{1}{n} \tr D_j Q_j(-\rho) \right|^p \leq K n^{-p/2}
\end{equation} 
for $p \le 4$. 


We introduce here various intermediate quantities: 
\begin{eqnarray}
c_i(z)&=& \frac{-1}{z\left( 1 +
\frac 1n \tr \ti D_i \E \ti Q(z)\right)},\ 1\le i\le N; \qquad 
C(z)=\diag(c_i(z);\ 1\le i\le N)\ , \nonumber \\
\ti c_j(z)&=& 
\frac{-1}{z\left( 1 +
\frac 1n \tr D_j \E Q(z)\right)},\ 1\le j\le n;\qquad 
\ti C(z)=\diag(\ti c_j(z);\ 1\le j\le n)\ , \nonumber \\
b_i(z)&=& \frac{-1}{z\left( 1 +\frac 1n \tr \ti D_i \ti C(z)\right)},\ 
1\le i\le N;\ \qquad B(z)=\diag(b_i(z);\ 1\le i\le N) \nonumber \\ 
\ti b_j(z)&=& \frac{-1}{z\left( 1 +\frac 1n \tr D_j C(z)\right)},\ 
1\le j\le n;\ 
\qquad \ti B(z)=\diag(\ti b_j(z);\ 1\le j\le n)\ . 
\label{eq-intermediate} 
\end{eqnarray}
The following lemma provides various bounds and approximation results.
\begin{lemma}
\label{lm-approximations-EQ-C-B} Consider the family of random matrices $(Y_n Y_n^*)$ and assume that
{\bf A-\ref{hypo-moments-X}} and {\bf A-\ref{hypo-variance-field}}
hold true. Let $z=-\rho$ where $\rho>0$. Then,
\begin{enumerate}
\item \label{superlemma-item1}
Matrices $C_n$ satisfy
$
\|C_n\| \leq \frac{1}{\rho}
\quad \textrm{and}\quad 0 <  c_{i} \leq \frac{1}{\rho} \ .
$  
These inequalities remain true when $C$ is replaced with $B$ or $\ti C$.  \\ 
\item \label{superlemma-item2} 
Let $U_n$ and $\ti U_n$ be two sequences of real diagonal deterministic 
$N\times N$ and $n \times n$ matrices.
Assume that 
$
\sup_{n\ge 1} \max\left( \| U_n\|, \|\ti U_n\|\right) <\infty\ ,
$ then the following hold true: \\
\begin{enumerate}
\item \label{eq-approx:EQ-T} 
$
\frac 1n \tr U(\E Q -T) \xrightarrow[n\rightarrow\infty]{} 0\ $
and $\ 
\frac 1n \tr 
\ti U(\E\ti Q -\ti T) \xrightarrow[n\rightarrow\infty]{} 0\ , 
$\\
\item \label{eq-approx:B-T} 
$
\frac 1n \tr U(B -T) \xrightarrow[n\rightarrow\infty]{} 0 \ ,\\
$
\item \label{eq-approx:(Q-EQ)2} 
$
\sup_n 
\E \left( \tr U \left( Q - \E Q \right) 
\right)^2 <  \infty
$,\\
\item \label{eq-approx:(Q-EQ)4} 
$
\sup_n  \frac 1{n^2} \E \left( \tr U \left( Q - 
\E Q \right) \right)^4 < \infty 
$, \\
\end{enumerate}
\item 
\label{superlemma-item:rank1} [Rank-one perturbation inequality]
The resolvent $Q_j$ satisfies 
$
\left| \tr M \left( Q - Q_j
\right) \right| 
\leq \frac{\left\| M \right\|}{\rho}
$
for any $N \times N$ matrix $M$ (see Lemma 2.6 in \cite{SilBai95}).
\end{enumerate}
\end{lemma}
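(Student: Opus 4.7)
The plan is to handle the six assertions in order of increasing technicality, relying on a martingale decomposition of $\tr U(Q-\E Q)$ for the hardest pieces.

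Items (1) and (3) are essentially immediate. For (1), observe that at $z=-\rho$ with $\rho>0$, the resolvents $Q(-\rho)$ and $\tilde Q(-\rho)$ are Hermitian positive definite matrices (with norm at most $1/\rho$ by Proposition \ref{prop-resolvent-properties}), so $\frac{1}{n}\tr \tilde D_i \E\tilde Q$, $\frac{1}{n}\tr D_j \E Q$, $\frac{1}{n}\tr \tilde D_i \tilde C$, $\frac{1}{n}\tr D_j C$ are all nonnegative (trace of a product of positive semidefinite Hermitian matrices). Hence the denominators in the defining equations (\ref{eq-intermediate}) are bounded below by $\rho$, which gives $0<c_i,\tilde c_j,b_i,\tilde b_j\le 1/\rho$ and the associated operator-norm bounds. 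Item (3) is the classical Silverstein--Bai rank-one perturbation lemma cited in the statement.

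For (2c), I employ a martingale decomposition along the filtration $(\mathcal{F}_j)$ of Section \ref{sec-outline-CLT}: write
\begin{equation*}
\tr U(Q-\E Q)=\sum_{j=1}^n (\E_j-\E_{j+1})\tr UQ =\sum_{j=1}^n \gamma_j.
\end{equation*}
Since $Q_j$ is $\mathcal{F}_{j+1}$-measurable, $(\E_j-\E_{j+1})\tr UQ_j=0$, so $\gamma_j=(\E_j-\E_{j+1})\tr U(Q-Q_j)$. The inversion formula (\ref{inversion-lemma}) yields $\tr U(Q-Q_j)=-y_j^*Q_jUQ_j y_j/(1+y_j^*Q_jy_j)$. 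Subtracting the $\mathcal{F}_{j+1}$-measurable deterministic counterpart $(n^{-1}\tr D_jQ_jUQ_j)/(1+n^{-1}\tr D_jQ_j)$, which is annihilated by $\E_j-\E_{j+1}$, and applying Lemma \ref{lm-approx-quadra}-(1) with $p=2$ together with the uniform bound $\|Q_j\|\le 1/\rho$, I obtain $\E|\gamma_j|^2\le K/n$. Orthogonality of martingale differences then gives $\E(\sum\gamma_j)^2=\sum\E\gamma_j^2\le K$, establishing (2c). For (2d), the same argument with $p=4$ in Lemma \ref{lm-approx-quadra}-(1) produces $\E|\gamma_j|^4\le K/n^2$; Burkholder's inequality combined with a Cauchy--Schwarz bound on $\E(\sum\gamma_j^2)^2$ then yields $\E(\sum\gamma_j)^4=O(1)$, well within the $O(n^2)$ bound required.

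For (2a), I use Lemma \ref{th-deter-approx-details}-(b), which provides a.s.\ convergence $\frac{1}{N}\tr U_n(Q_n-T_n)\to 0$. Since $\|Q\|,\|T\|\le 1/\rho$, the integrand is uniformly bounded by $2\|U\|/\rho$, so dominated convergence upgrades a.s.\ convergence to $L^1$ convergence; taking expectations yields $\frac{1}{N}\tr U(\E Q-T)\to 0$, and multiplying by $N/n\to c$ produces the claimed limit. The argument for $\tilde Q$ is identical. Finally for (2b), the elementary algebraic identities $\tilde c_j-\tilde t_j = z\,\tilde c_j\,\tilde t_j\cdot\frac{1}{n}\tr D_j(\E Q-T)$ and $b_i-t_i = z\,b_i\,t_i\cdot\frac{1}{n}\tr \tilde D_i(\tilde C-\tilde T)$ follow directly from comparing the defining equations. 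Combined with the bounds from (1), these reduce (2b) to showing $\max_j|\tilde c_j-\tilde t_j|\to 0$, equivalently $\max_i|\E q_{ii}-t_i|\to 0$. This uniform-in-$i$ control is the main technical obstacle; it can be secured by applying the Schur formula (\ref{eq-qii}) and Lemma \ref{lm-approx-quadra}-(1) to each individual diagonal entry, producing $\E q_{ii}-t_i=O(n^{-1/2})$ uniformly in $i$, a refinement of the deterministic-equivalent arguments of \cite{HLN07}. Once this uniform estimate is in hand, item (2b) follows by summation against $U$.
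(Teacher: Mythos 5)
Your treatment of items (1), (3), (2a), (2c) follows essentially the same route as the paper, and your version of (2d), which subtracts the deterministic counterpart before invoking Lemma \ref{lm-approx-quadra}-(\ref{silverstein}) with $p=4$, gives the stronger $O(1)$ bound in place of the paper's cruder $O(N^2)$; both are correct since only $O(n^2)$ is claimed. The problem is item (2b).

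Your algebraic step
\[
b_i - t_i = z\,b_i t_i \cdot \tfrac 1n \tr \ti D_i(\ti C - \ti T),\qquad
\ti c_j - \ti t_j = z\,\ti c_j\ti t_j \cdot \tfrac 1n \tr D_j(\E Q - T)
\]
is fine, but the reduction you draw from it is not. First, the claim that controlling $\max_j|\ti c_j - \ti t_j|$ is ``equivalent'' to controlling $\max_i|\E q_{ii} - t_i|$ is false in one direction: $\ti c_j - \ti t_j$ involves the \emph{averaged} quantity $\frac1n\tr D_j(\E Q - T)$, which is strictly weaker than per-entry control of $\E q_{ii}-t_i$. Second, and more seriously, the uniform estimate $\max_i|\E q_{ii}-t_i| = O(n^{-1/2})$ that you declare can be ``secured by applying the Schur formula (\ref{eq-qii}) and Lemma \ref{lm-approx-quadra}-(\ref{silverstein}) to each individual diagonal entry'' is not established by those tools. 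Lemma \ref{lm-approx-quadra}-(\ref{silverstein}) together with (\ref{eq-qii}) gives concentration of $q_{ii}$ around a quantity involving $\frac1n\tr\ti D_i\ti Q_i$, not closeness of $\E q_{ii}$ to $t_i$: the $t_i$ are defined through the coupled self-consistent system (\ref{eq-systeme-approx-deter})/(\ref{eq-approx-determinist-complete}), and to get $\E q_{ii} - t_i\to 0$ uniformly in $i$ (let alone with a rate) one needs a stability/contraction argument for that system, a substantial additional step that you neither carry out nor reduce to anything in the paper.

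The point the paper exploits is that no per-entry control is needed. After the algebraic identity, one has
\[
\frac 1n \tr U(B-T) \;=\; \frac{z^2}{n}\sum_{j=1}^n \ti c_j\,\ti t_j\,\Big(\tfrac 1n\sum_{i=1}^N [U]_{ii}\,b_i t_i\,\sigma^2_{ij}\Big)\,\tfrac 1n \tr D_j(\E Q - T),
\]
where the quantity in parentheses is bounded uniformly, so it suffices that
$\frac 1n\sum_{j=1}^n \big|\frac 1n\tr D_j(\E Q - T)\big| \to 0$. This in turn follows directly from (2a) by choosing $U_n = \frac 1n\sum_j s_j D_j$ with $s_j = \mathrm{sign}\big(\frac 1n\tr D_j(\E Q - T)\big)$: this is a bounded deterministic diagonal matrix, and
$\frac 1n\sum_j\big|\frac 1n\tr D_j(\E Q - T)\big| = \frac 1n\tr U_n(\E Q - T) \to 0$. (The paper phrases this via dominated convergence for a staircase function, which is the same estimate.) Thus (2b) follows from (2a) alone, with averaged quantities only; replacing this by an unproved uniform entrywise local law leaves a genuine gap in your argument.
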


Proof of Lemma \ref{lm-approximations-EQ-C-B} is postponed to Appendix 
\ref{anx-proof-lm-approximations-EQ-C-B}.

Finally, we shall frequently use the following identities which are obtained 
from the definitions of $c_i$ and $\ti c_j$ together with Equations 
(\ref{eq-qii}): 
\begin{eqnarray}
\label{eq-QiiX}
[Q(z)]_{ii} &=& c_i  + z c_i [Q]_{ii}
\left( \xi_i \tilde{Q}_i \xi_i^* - \frac{1}{n} \tr \tilde{D}_i \E \tilde{Q}
\right)  \\
\ \! [ \tilde{Q}(z) ]_{jj}  &=&
\tilde{c}_j  + z \tilde{c}_j [ \tilde{Q} ]_{jj}
\left( y_j^* {Q}_j y_j - \frac{1}{n} \tr {D}_j \E {Q}
\right) 
\label{eq-identite-tildeQX}
\end{eqnarray}

\subsection{Proof of Step 1: The sum of a martingale difference sequence}\label{step-one}
Recall that $\mathbb{E}_j=\mathbb{E}(\cdot \mid {\mathcal F}_j)$ where
${\mathcal F}_j=\sigma(y_{\ell},\ j\le \ell\le n)$. We have:
\begin{eqnarray*}
\lefteqn{\log \det (Y Y^* + \rho I_N) -
\E \log \det (Y Y^* + \rho I_N )}\\
&=& \sum_{j=1}^n 
(\E_j -\E_{j+1}) \log \det (Y Y^* + \rho I_N) \\
&\stackrel{(a)}{=}&- \sum_{j=1}^n 
\left( \E_j - \E_{j+1} \right) 
\log \left( 
\frac     
{ \det\left( Y^j {Y^j}^* + \rho I_N \right)  } 
{ \det\left( Y Y^* + \rho I_N \right)  } 
\right), \\ 
&\stackrel{(b)}{=}& 
- \sum_{j=1}^n 
\left( \E_j - \E_{j+1} \right) 
\log \left( 
\frac     
{ \det\left( {Y^j}^* Y^j + \rho I_{n-1} \right)  } 
{ \det\left( Y^* Y + \rho I_n \right)  } 
\right)\\
&\stackrel{(c)}{=}& - \sum_{j=1}^n \left( \E_j - \E_{j+1} \right) \log 
\left[ \tilde{Q}\left(-\rho\right) \right]_{jj}\ ,\\
&\stackrel{(d)}{=}& \sum_{j=1}^n \left( \E_j - \E_{j+1} \right) \log 
\left( 1 + y_j^* Q_j\left(-\rho\right) y_j\right) 
\end{eqnarray*}
where $(a)$ follows from the fact that $Y^j$ does not depend upon $y_j$, in particular 
$
\E_j \log \det\left( Y^j {Y^j}^* + \rho I_{N} \right)  
= 
\E_{j+1} \log \det\left( Y^j {Y^j}^* + \rho I_{N} \right) ;
$ $(b)$ follows from the fact that $\det(Y^{j*} Y^j +\rho I_{n-1})=\det (Y^j Y^{j*} +\rho I_N) \times \rho^{n-1-N}$
(and a similar expression for $\det(Y^* Y +\rho I_n)$); $(c)$ follows from the equality
$$
[\ti Q(-\rho)]_{jj} =
\frac{\det(Y^{j\,*} Y^j + \rho I_{n-1})}{\det (Y^* Y + \rho I_n)}
$$
which is a consequence of the general inverse formula
$A^{-1} =\frac 1{\det(A)} \mathrm{adj}(A)$ where 
$\mathrm{adj}(A)$ is the transposed matrix of cofactors of $A$ (see
Section 0.8.2 in \cite{HorJoh94}); and $(d)$ follows from  
(\ref{eq-qii}). We therefore have
\begin{eqnarray*}
\lefteqn{\log \det (Y Y^* + \rho I_N) -
\E \log \det (Y Y^* + \rho I_N )}\\
& =& \sum_{j=1}^n \left( \E_j - \E_{j+1} \right) \log 
\left( 1 + y_j^* Q_j\left(-\rho\right) y_j\right)\quad \stackrel{\triangle}{=}\quad  \sum_{j=1}^n \gamma_j\ .
\end{eqnarray*} 
As the following identity holds true, 
$$
\E_j \log\left( 1 + \frac{1}{n} \tr D_j Q_j \right) = 
\E_{j+1} \log\left( 1 + \frac{1}{n} \tr D_j Q_j \right)\ ,
$$
one can express $\gamma_j$ as:
\begin{eqnarray}
\gamma_j &=& \left( \E_j - \E_{j+1} \right) 
\log\left( 1 + 
\frac{y_j^* Q_j y_j - \frac 1n \tr D_j Q_j}
{1 + \frac 1n \tr D_j Q_j} 
\right) \nonumber \\
&=& \left( \E_j - \E_{j+1} \right) \log\left( 1 + \Gamma_j \right) \qquad \textrm{where}
\quad  \Gamma_j = 
\frac{y_j^* Q_j y_j - \frac 1n \tr D_j Q_j}
{1 + \frac 1n \tr D_j Q_j}\ . \label{def-Aj}
\end{eqnarray}
The sequence $\gamma_n, \ldots, 
\gamma_1$ is a martingale difference sequence with respect to the increasing
filtration ${\mathcal F}_n , \ldots, {\mathcal F}_1$ and Step 1 is established.

\subsection{Proof of Step 2: Validation of Lyapounov's condition
(\ref{eq-lyapounov})} 
\label{subsec-lyapounov} 
In the remainder of this section, $z=-\rho$. Let $\delta>0$ be a fixed positive number that will be specified below. As 
$\liminf \Theta_n^2 > 0$ by Theorem \ref{th-variance}, we only
need to prove that 
$\sum_{j=1}^n \E|\gamma_j|^{2+\delta} \to_n 0$. We have
$
\E|\gamma_j|^{2+\delta} =\E \left| (\E_j - \E_{j+1}) \log(1+\Gamma_j)\right|^{2+\delta}
$; Minkowski and Jensen inequalities yield:
\begin{eqnarray*}
\left(\E|\gamma_j|^{2+\delta}\right)^{\frac 1{2+\delta}} &\le &
\left( \E \left| \E_j \log(1+\Gamma_j)\right|^{2+\delta}\right)^{\frac 1{2+\delta}}
+ \left( \E \left| \E_{j+1} \log(1+\Gamma_j)\right|^{2+\delta} \right)^{\frac 1{2+\delta}}\\
&\le& 2 \left( \E \left|  \log(1+\Gamma_j)\right|^{2+\delta} \right)^{\frac 1{2+\delta}}.
\end{eqnarray*}
Otherwise stated, 
\begin{equation}\label{majo-lyapounov}
\E|\gamma_j|^{2+\delta} \le K_0\, \E \left|  \log(1+\Gamma_j)\right|^{2+\delta}
\end{equation}
where $K_0=2^{2+\delta}$. Since $y_j^* Q_j y_j \ge 0$, $\Gamma_j$ (defined in (\ref{def-Aj})) is lower bounded:
$$
\Gamma_j \ge \frac{-\frac 1n \tr D_j Q_j}{1+\frac 1n \tr D_j Q_j}.
$$ 
Now, since  
$$
0 \leq 
\frac{1}{n} \tr D_j Q_j\left( - \rho \right) 
\leq 
\frac{\| D_j \|}{n} \tr\, Q_j\left( - \rho \right) 
\leq K_1 \eqdef \frac{ \sigma_{\mathrm{max}}^2}{\rho} \sup_n \left(\frac{N}{n} \right) 
$$
and since $x\mapsto \frac{x}{1+x}$ is non-decreasing, we have:
\begin{equation}\label{defK2}
\frac{\frac{1}{n} \tr D_j Q_j}{1+\frac{1}{n} \tr D_j Q_j} 
\le K_2 \stackrel{\triangle}{=}\frac{K_1}{1+K_1} <1.
\end{equation}
In particular, $\Gamma_j\ge -K_2>-1.$ The function $(-1,\infty)\ni x\mapsto
\frac{\log(1+x)}{x}$ is non-negative, non-increasing. Therefore, 
$\frac {\log(1+x)}{x} \le \frac{\log(1-K_2)}{K_2} $ for $x\in [-K_2,\infty).$
Plugging this into (\ref{majo-lyapounov}) yields
\begin{eqnarray*}
\E |\gamma_j|^{2+\delta} &\le & K_0 K_2^{2+\delta}\, \E |\Gamma_j|^{2+\delta}\\
& \stackrel{\triangle}{=}& K_3\, \E |\Gamma_j|^{2+\delta}\ 
\le K_3\ \E\left| y_j^* Q_j y_j - \frac 1n \tr D_j Q_j \right|^{2+\delta}.
\end{eqnarray*}
By lemma \ref{lm-approx-quadra}-(\ref{silverstein}), the right hand side of the 
last inequality is
lower than $K_4\, n^{-(1+\delta/2)}$ as soon as 
$\E|X_{11}|^{2 + \delta} < \infty$. 
This is ensured by {\bf A-\ref{hypo-moments-X}} for 
$\delta\le 6$. Therefore, Lyapounov's condition \eqref{eq-lyapounov} holds and Step 2 is proved. 

\subsection{Proof of Step 3: Convergence of the normalized sum of 
conditional variances}
This section, by far the most involved in this article, is devoted to
establish the convergence (\ref{eq-cond-variance-clt}), hence the CLT.
In an attempt to guide the reader, we divide 
it into five stages. Recall that $z=-\rho$ and 
$$
\gamma_j = \left( \E_j - \E_{j+1} \right) 
\log\left( 1 + \Gamma_j \right)\quad \textrm{where}\quad \Gamma_j=
\frac{y_j^* Q_j y_j - \frac 1n \tr D_j Q_j}
{1 + \frac 1n \tr D_j Q_j}\ .
$$
In order to apply Theorem \ref{th-clt-martingales}, we shall prove that 
$\Theta_n^{-2} \sum_{j=1}^n \E_{j+1} \gamma_j^2 \cvgP{} 1$ 
where $\Theta_n^2$ is given by Theorem \ref{th-variance}. Since 
$\liminf \Theta_n^2 > 0$,
it is sufficient to establish the following convergence:
\begin{equation}
\label{eq-cvg-gamma-minus-theta} 
\sum_{j=1}^n \E_{j+1} \gamma_j^2 \ - \ \Theta^2_n
\ \cvgP{n\to\infty}  \ 0\ . 
\end{equation} 
Instead of working with $\Theta_n$, we shall work with $\tilde \Theta_n$ 
(introduced in Section \ref{proof-variance}, see Eq. \eqref{thetatilde}) and prove:
\begin{equation}
\label{conv-theta-tilde} 
\sum_{j=1}^n \E_{j+1} \gamma_j^2 \ - \ \tilde \Theta^2_n
\ \cvgP{n\to\infty}  \ 0\ . 
\end{equation} 
In the sequel, $K$ will denote a constant whose value may change
from line to line but which will neither depend on $n$ nor on $j\le n$.

Here are the main steps of the proof:
\begin{enumerate}
\item  The following convergence holds true:
\begin{equation}
\label{eq-cvg-proba-gamma-A}
\sum_{j=1}^n 
\E_{j+1} \gamma_j^2 -
\sum_{j=1}^n \E_{j+1} \left( \E_j \Gamma_j \right)^2
\cvgP{n\to\infty} 0 \ . 
\end{equation}
This convergence roughly follows from a first order approximation, as
we informally discuss: Recall that $\gamma_i= (\E_j -\E_{j+1}) \log
(1+\Gamma_j)$ and that $\Gamma_j \rightarrow 0$ by Lemma \ref{lm-approx-quadra}-(\ref{silverstein}). A
first order approximation of $\log(1+x)$ yields $\gamma_j \approx
(\mathbb{E}_j -\mathbb{E}_{j+1})\Gamma_j$. As $\mathbb{E}_{j+1} (y_j^* Q_j
y_j) = \frac 1n \tr D_j \mathbb{E}_{j+1} Q_j$, one has
$\mathbb{E}_{j+1} \Gamma_j=0$, hence $\gamma_j \approx \mathbb{E}_j \Gamma_j$
and one may expect $\mathbb{E}_{j+1} \gamma_j^2 \approx
\mathbb{E}_{j+1}(\mathbb{E}_j \Gamma_j)^2$ and even
\eqref{eq-cvg-proba-gamma-A} as we shall demonstrate.
\\
\item Recall that $\kappa= \E|X_{11}|^4 -2$. The following equality holds true 
\begin{multline}
\label{eq-Ej+1(EjAj)^2}
\E_{j+1} \left( \E_j \Gamma_j \right)^2 = 
\frac{1}{n^2 \left(1+\frac 1n \tr D_j \E Q \right)^2} 
\left( \tr D_j \left( \E_{j+1} Q_j \right) 
D_j \left( \E_{j+1} Q_j \right) \phantom{\sum_{i=1}^N } \right.\\
+ \left. \kappa \sum_{i=1}^N \sigma^4_{ij} \left( \E_{j+1} [Q_j]_{ii} \right)^2 
\right) + \boldsymbol{\varepsilon_{2,j}}
\end{multline}
where 
$$
\max_{j\le n} \E | \boldsymbol{\varepsilon_{2,j}} | 
\le \frac{K}{n^{3/2}}
$$ for some given $K$. 
\end{enumerate}
A closer look to the right hand side of \eqref{eq-Ej+1(EjAj)^2} yields
the following comments: By Lemma
\ref{lm-approximations-EQ-C-B}-(\ref{eq-approx:EQ-T}), the denominator
$(1 + \frac 1n \tr D_j \E Q)^2$ can be approximated by $(1 + \frac 1n
\tr D_j T)^2$; moreover, it is possible to prove that $[ Q_j ]_{ii} \approx [ T
]_{ii}$ (some details are given in the course of the proof of step (5)
below). Hence,
\[ 
 \frac{\kappa}n \sum_{i=1}^N \sigma^4_{ij} 
\left( \E_{j+1} [Q_j]_{ii} \right)^2 
\approx 
 \frac {\kappa}n \tr D_j^2 T^2  \ . 
\] 
Therefore
it remains to study the asymptotic behaviour of the term
$\frac 1n \tr D_j (\E_{j+1} Q_j) D_j (\E_{j+1} Q_j )$ in order to understand \eqref{eq-Ej+1(EjAj)^2}. This is the
purpose of step (3) below.
  
\begin{enumerate}
\item[(3)] In order to evaluate 
$\frac 1n \tr D_j (\E_{j+1} Q_j) D_j (\E_{j+1} Q_j )$ 
for large $n$, we introduce the random variables  
\begin{equation}\label{DefChi}
\bs{\chi_{\ell, n}^{(j)}} =\frac 1n \tr D_{\ell} (\E_{j+1} Q) D_j Q,\quad  j\le \ell\le n.
\end{equation}
Note that, up to rank-one perturbations,  
$\E_j \bs{\chi_{j,n}^{(j)}}$ is very close to the term of interest. We prove here that   
$\bs{\chi_{\ell, n}^{(j)}}$ satisfies the following equation: 
\begin{equation}
\label{eq-xi-lj}
\boldsymbol{\chi_{\ell, n}^{(j)}} = 
\frac{1}{n} 
\sum_{m=j+1}^n 
\frac{\frac{1}{n} \tr (D_\ell B D_m \E Q) }{\left( 1 + \frac{1}{n} \tr D_m \E Q \right)^2} 
 \boldsymbol{\chi_{m, n}^{(j)} }
+  \frac 1n \tr D_\ell B D_j \E Q + \boldsymbol{\varepsilon_{3,\ell j}} ,\quad j\le \ell\le n\ ,
\end{equation}
where $B$ is defined in Section \ref{sec-outline-CLT} and where 
$$
\max_{\ell, j \le n}  \E | \boldsymbol{\varepsilon_{3,\ell j}} | \le \frac K{\sqrt{n}}\ .
$$

\item[(4)] Recall that we have proved in Section \ref{proof-variance}
  (Lemma \ref{system-aux}) that the following (deterministic) system:
$$
\bs{y_{\ell,n}^{(j)}}=
\sum_{m=j+1}^n  a_{\ell, m}\  
\bs{y_{m,n}^{(j)}}
+ a_{\ell, j} \quad \textrm{for}\quad  j\le \ell\le n\ ,
$$
where $ a_{\ell, m} = \frac{1}{n^2} \frac{\tr D_{\ell} D_m T^2}
{\left(1 + \frac{1}{n} \tr D_{\ell} T  \right)^2}$ admits a unique solution. Denote by 
$\bs{x_{\ell, n}^{(j)}}=n\left(1 +\frac 1n \tr D_{\ell} T\right)^2 \bs{y_{\ell,n}^{(j)}}$, then 
$(\bs{x_{\ell, n}^{(j)}},\ j\le \ell\le n)$ readily satisfies the following system:
$$
\bs{x_{\ell, n}^{(j)}} =\frac 1n \sum_{m=j+1}^n  \frac{\frac 1n \tr D_{\ell} D_m T^2}{\left( 1 +\frac 1n \tr D_m T \right)^2}
\bs{x_{m, n}^{(j)}} + \frac 1n \tr D_{\ell} D_j T^2,\quad j\le \ell \le n\ .
$$
As one may
notice, \eqref{eq-xi-lj} is a perturbated version of the system above
and we shall indeed prove that:
\begin{equation}
\label{eq-zeta_jj-x_jj}
\bs{\chi_{j,n}^{(j)}} = \bs{x_{j,n}^{(j)}} + \boldsymbol{\varepsilon_{41,j}} + \boldsymbol{\varepsilon_{42,j}}\quad \textrm{where}\quad
\max_{j\le n} \E  |\boldsymbol{\varepsilon_{41,j}}|\le \frac K{\sqrt{n}}\quad
\textrm{and}\quad \max_{j\le n}|\boldsymbol{\varepsilon_{42,j}}|\le \delta_n,
\end{equation}
the sequence $(\delta_n)$ being deterministic with $\delta_n \rightarrow 0$ as $n\rightarrow \infty$.\\

\item[(5)] 
Combining the previous results, we finally prove that
\begin{equation}
\label{eq-Ej+1(EjAj)^2-theta}
\sum_{j=1}^n \E_{j+1}(\E_j \Gamma_j)^2 \ - \ \tilde \Theta^2_n 
\ \cvgP{n\to\infty} \ 0 \ . 
\end{equation}
This, together with (\ref{eq-cvg-proba-gamma-A}), yields convergence
\eqref{conv-theta-tilde} and (\ref{eq-cvg-gamma-minus-theta}) which in
turn proves (\ref{eq-cond-variance-clt}), ending the proof of Theorem
\ref{th-clt}.

\end{enumerate}

\begin{proof}[Proof of (\ref{eq-cvg-proba-gamma-A})] Recall that
$
\frac{\frac{1}{n} \tr D_j Q_j}{1+\frac{1}{n} \tr D_j Q_j} 
\le K_2 <1\ 
$
by \eqref{defK2}. In particular, $\Gamma_j\ge -K_2>-1.$ We first prove that 
$$
\E_j \log(1+\Gamma_j) = 
\E_j \Gamma_j + \boldsymbol{\varepsilon_{11,j}} + \boldsymbol{\varepsilon_{12,j}}
$$ 
where
\begin{equation}
\left\{
\begin{array}{ll}
\boldsymbol{\varepsilon_{11,j}} &= 
\E_j \log(1+\Gamma_j) {\bs 1}_{|\Gamma_j|\le K_2}- \E_j \Gamma_j \\ 
\boldsymbol{\varepsilon_{12,j}} &= 
\E_j  \log(1+\Gamma_j) {\bs 1}_{(K_2, \infty)}(\Gamma_j) 
\end{array}\right. \ 
\textrm{and} \quad\left\{ 
\begin{array}{l}
 \max_{j\le n} \E\,\boldsymbol{\varepsilon_{11,j}}^2 
 \le \frac{K}{n^2}\\ 
 \max_{j\le n} \E\,\boldsymbol{\varepsilon_{12,j}}^2 
 \le \frac{K}{n^2} 
\end{array}\right. .\label{inequality-epsilon-1}
\end{equation}
In the sequel, we shall often omit subscript $j$ while dealing with the
$\boldsymbol{\varepsilon}$'s. As $0<K_2<1$, we have:
\begin{eqnarray*}
\left| \boldsymbol{\varepsilon_{11}} \right| &=& \left| \E_j \left( 
\sum_{k=1}^\infty  \frac{(-1)^{k-1}}{k} \Gamma_j^k 
{\bs 1}_{|\Gamma_j|\le K_2}
\ - \ \Gamma_j \right) \right| \ ,\\
&\leq& \E_j \Gamma_j {\bs 1}_{\Gamma_j>K_2}+
\sum_{k=2}^\infty \E_j \left| \Gamma_j \right|^k 
{\bs 1}_{|\Gamma_j|\le K_2} \quad
\leq\quad \E_j \Gamma_j {\bs 1}_{\Gamma_j>K_2} +
\frac{\E_j \Gamma_j^2 {\bs 1}_{|\Gamma_j|\le K_2}}{1-K_2}\ .
\end{eqnarray*} 
Therefore,  
\begin{eqnarray*}
\E\, \boldsymbol{\varepsilon_{11}}^2 
&\stackrel{(a)}{\le}&2\left(  \E \Gamma_j^2 {\bs 1}_{\Gamma_j>K_2} + 
\frac{ \E \Gamma_j^4 
{\bs 1}_{|\Gamma_j|\le K_2}}{(1-K_2)^2} \right)\\
&\stackrel{(b)}{\le}& \frac{2\E \Gamma_j^4}{K_2^2} + 
\frac{ 2 \E \Gamma_j^4}{(1-K_2)^2} \\
&\stackrel{(c)}{\leq}& \left( \frac{2}{K_2^2} + \frac{2}{(1-K_2)^2} \right) 
\E \left( y_j^* Q_j y_j - \frac 1n \tr D_j Q_j \right)^{4}  
\quad \stackrel{(d)}{\le}\quad \frac{K}{n^2}
\end{eqnarray*} 
where $(a)$ follows from $(a+b)^2 \le 2(a^2 + b^2)$, $(b)$ from the inequality 
$\Gamma_j^2 {\bs 1}_{\Gamma_j>K_2} \le \Gamma_j^2 \left(\frac{\Gamma_j}{K_2}\right)^2 {\bs 1}_{\Gamma_j>K_2}$,
$(c)$ from the fact that the denominator of $\Gamma_j$ is larger than one, and $(d)$  
from Lemma \ref{lm-approx-quadra}-(\ref{silverstein}) as $X_{11}$ has 
a finite $8^{\textrm{th}}$ moment by {\bf A-\ref{hypo-moments-X}}.

Now, $0\le \boldsymbol{\varepsilon_{12}} 
\le \E_j \Gamma_j {\bs 1}_{\Gamma_j>K_2}$. Thus, $\mathbb{E} \boldsymbol{\varepsilon_{12}}^2
\le \E \Gamma_j^2 {\bs 1}_{\Gamma_j>K_2}\le K_2^{-2} \E \Gamma_j^4 {\bs 1}_{\Gamma_j>K_2}$. Lemma
\ref{lm-approx-quadra}-(\ref{silverstein}) yields again:
$$
\E \,\boldsymbol{\varepsilon_{12}}^2  \le \frac{K}{n^2}
$$ 
and (\ref{inequality-epsilon-1}) is proved. Similarly, 
we can prove
\[ 
\E_{j+1} \log(1+\Gamma_j) = \E_{j+1} \Gamma_j + \boldsymbol{\varepsilon_{13,j}}\quad 
\textrm{with}\quad \max_{j\le n} 
\E\,\boldsymbol{\varepsilon_{13,j}}^2 \le \frac{K}{n^2} \ . 
\] 
Note that since $y_j$ and 
${\mathcal F}_{j+1}$ are independent, we have
$\mathbb{E}_{j+1} (y_j^* Q_j
y_j) = \frac 1n \tr D_j \mathbb{E}_{j+1} Q_j$ which yields $\E_{j+1} \Gamma_j =0$.  
Gathering all the previous estimates, we obtain:
\begin{eqnarray*}
\gamma_j &=&\E_j \Gamma_j + \boldsymbol{\varepsilon_{11,j}} + \boldsymbol{\varepsilon_{12,j}} - \boldsymbol{\varepsilon_{13,j}}\\
&\stackrel{\triangle}{=}& \E_j \Gamma_j + \boldsymbol{\varepsilon_{14,j}}\ ,
\end{eqnarray*}
where $\max_{j\le n}\E\, \bs{\varepsilon_{14,j}}^2 \le K\,n^{-2}$ 
by Minkowski's inequality. We therefore have $\E_{j+1}(\gamma_j)^2 = \E_{j+1} ( \E_j \Gamma_j + 
\bs{\varepsilon_{14,j}} )^2$. Let 
$$
\bs{\varepsilon_{1,j}} \eqdef 
\E_{j+1}(\gamma_j)^2 - \E_{j+1}(\E_j \Gamma_j)^2 = 
\E_{j+1} \bs{\varepsilon_{14,j}}^2 + 
2 \E_{j+1} ( \bs{\varepsilon_{14,j}} \, \E_j \Gamma_j ).
$$ 
Then
\begin{eqnarray*}
\E | \bs{\varepsilon_{1,j}} | &\leq& 
\E \bs{\varepsilon_{14,j}}^2 + 
2 \E | \bs{\varepsilon_{14,j}} \, \E_j \Gamma_j | \ , \\
&\stackrel{(a)}\le&
\E \bs{\varepsilon_{14,j}}^2 + 
2 ( \E \bs{\varepsilon_{14,j}}^2 )^{1/2} ( \E \Gamma_j^2 )^{1/2}\quad 
\stackrel{(b)}{\leq}\quad \frac{K}{n^{3/2}}\ ,
\end{eqnarray*}
where $(a)$ follows from Cauchy-Schwarz inequality and $(\E_j \Gamma_j)^2
\leq \E_j \Gamma_j^2$, and $(b)$ follows from Lemma \ref{lm-approx-quadra}-(\ref{silverstein})
which yields $\E \Gamma_j^2 \leq Kn^{-1}$.  Finally, we have $\sum_{j=1}^n \E |
\E_{j+1}(\gamma_j)^2 - \E_{j+1}(\E_j \Gamma_j)^2 | \leq K n^{-\frac 12}$ which
implies (\ref{eq-cvg-proba-gamma-A}).
\end{proof}

\begin{proof}[Proof of (\ref{eq-Ej+1(EjAj)^2})] 
We have: 
\begin{eqnarray*} 
\E_{j} \Gamma_j &=&  
\E_{j} \left( \frac{y_j^* Q_j y_j - 
\frac 1n \tr D_j Q_j}{1+\frac 1n \tr D_j Q_j}
\right)\ ,\\ 
&=& \frac{1}{1+\frac 1n \tr D_j \E Q} \Bigg\{ 
\E_{j} \left( y_j^* Q_j y_j - \frac 1n \tr D_j Q_j \right) 
\phantom{\frac{\frac 1n}{\frac 1n}}  \\
& & \phantom{\frac{1}{1+\frac 1n \tr D_j \E Q}}
- \E_{j} \left(  
\frac{y_j^* Q_j y_j - \frac 1n \tr D_j Q_j}
{1+\frac 1n \tr D_j Q_j} \left( \frac 1n \tr D_j Q_j - 
\frac 1n \tr D_j \E Q \right) \right) \Bigg\} \ .
\end{eqnarray*} 
Hence,
\begin{eqnarray}
\label{eq-(EjAj)^2-extended}
\E_{j+1} (\E_{j} \Gamma_j)^2 &=&  
\frac{1}{(1+\frac 1n \tr D_j \E Q)^2} 
\E_{j+1}
\left( 
\left( y_j^* (\E_j Q_j) y_j - \frac 1n \tr D_j \E_j Q_j \right)^2 
+ 
\bs{\varepsilon_{21,j}} + 
\bs{\varepsilon_{22,j}} \right)\nonumber \\
&=&  \frac{1}{(1+\frac 1n \tr D_j \E Q)^2} 
\E_{j+1}
\left( y_j^* (\E_j Q_j) y_j - \frac 1n \tr D_j \E_j Q_j \right)^2 
+\bs{\varepsilon_{2,j}}
\end{eqnarray}
where
\begin{eqnarray*}
\bs{\varepsilon_{21,j}} &=&
\left[ 
\E_{j} \left(  
\frac{y_j^* Q_j y_j - \frac 1n \tr D_j Q_j}
{1+\frac 1n \tr D_j Q_j} \left( \frac 1n \tr D_j Q_j - 
\frac 1n \tr D_j \E Q \right) \right) 
\right]^2\ , \\ 
\bs{\varepsilon_{22,j}} &=&
-2 \ \E_{j} 
\left( y_j^* Q_j y_j - \frac 1n \tr D_j Q_j \right)\times  \\
& & 
\phantom{-2 \ \E_{j} 
\left( y_j^* Q_j y_j\right)}
 \E_{j} \left(  
\frac{y_j^* Q_j y_j - \frac 1n \tr D_j Q_j}
{1+\frac 1n \tr D_j Q_j} \left( \frac 1n \tr D_j Q_j - 
\frac 1n \tr D_j \E Q \right) \right) \ ,\\
\bs{\varepsilon_{2,j}} &=& \frac{\E_{j+1} ( \bs{\varepsilon_{21,j}} +
\bs{\varepsilon_{22,j}})}{(1+\frac 1n \tr D_j \E
  Q)^2} \ .
\end{eqnarray*}
As
$\frac 1n \tr D_j Q_j \ge 0$, 
standard inequalities yield:
$$
\E \bs{\varepsilon_{21,j}} \leq 
\left[ \E
\left( y_j^* Q_j y_j - \frac 1n \tr D_j Q_j \right)^4 \right]^{\frac 12} 
\left[ \E \left( \frac 1n \tr D_j Q_j - \frac 1n \tr D_j \E Q \right)^4 
\right]^{\frac 12}\ . 
$$
By Lemma \ref{lm-approx-quadra}-(\ref{silverstein}),  $\E
\left( y_j^* Q_j y_j - \frac 1n \tr D_j Q_j \right)^4 
\le K n^{-2}$. Due to the convex inequality $(a+b)^4\le 2^3(a^4 +b^4)$, we obtain:
\begin{eqnarray*}
\E \left( \frac 1n \tr D_j(Q_j-\E Q)\right)^4 &=& 
\E \left( \frac 1n \tr D_j(Q_j-\E Q_j)+ \frac 1n \tr D_j(\E Q_j-\E Q)\right)^4\\
&\le& K\left\{ \E \left( \frac 1n \tr D_j(Q_j-\E Q_j)\right)^4
+ \E\left( \frac 1n \tr D_j(\E Q_j-\E Q)\right)^4\right\}\ ,
\end{eqnarray*}
where the first term of the right hand side is bounded by $K n^{-2}$
by \eqref{eq-approx:(Q-EQ)4} in Lemma \ref{lm-approximations-EQ-C-B}
and the second one is bounded by $K n^{-4}$ due to the rank-one
perturbation inequality [Lemma
\ref{lm-approximations-EQ-C-B}-(\ref{superlemma-item:rank1})].
Therefore $\E \bs{\varepsilon_{21,j}} \le K n^{-2}$ and similar
derivations yield $\E | \bs{\varepsilon_{22,j}} | \le K n^{-\frac
  32}$. Gathering these two results yields the bound $\E |
\bs{\varepsilon_{2,j}} | \leq K n^{-\frac 32}$. Let us now
expand the term $\E_{j+1} \left(
  y_j^* \E_j Q_j y_j -\frac 1n \tr D_j \E_j Q_j \right)^2$ in the
right hand side of (\ref{eq-(EjAj)^2-extended}).

Recall that $\E_j Q_j =\E_{j+1} Q_j$ and that $y_j = D_j^{\frac 12}
\left(\frac{X_{1j}}{\sqrt{n}}, \cdots,
  \frac{X_{Nj}}{\sqrt{n}}\right)^T$. Note also that $\E_{j+1}\left(y_j^* \E_j Q_j
  y_j\right)= \frac 1n \tr D_j \E_{j+1} Q_j$. Then Lemma \ref{lm-approx-quadra}-(\ref{identite-cumu})
immediatly yields:
\begin{eqnarray*} 
\lefteqn{\E_{j+1} \left( y_j^* \E_j Q_j y_j - \frac 1n \tr D_j \E_j Q_j \right)^2 }\\
&=&
\frac{1}{n^2} \left( \tr D_j \left( \E_{j+1} Q_j \right) 
D_j \left( \E_{j+1} Q_j \right) 
+ \kappa \sum_{\ell=1}^N \sigma^4_{\ell j} \left( \E_{j+1} [Q_j]_{\ell \ell} \right)^2 
\right) 
\ .
\end{eqnarray*}
Equation (\ref{eq-Ej+1(EjAj)^2}) is proved. 
\end{proof}

\begin{proof}[Proof of (\ref{eq-xi-lj})] 
Recall that $\bs{\chi_{\ell, n}^{(j)}}= \frac 1n \tr D_{\ell} (\E_{j+1} Q) D_j Q$.
The outline of the proof of (\ref{eq-xi-lj}) is given by the following set 
of equations, the ${\bs \chi}$'s and $\boldsymbol{\varepsilon}$'s being introduced 
as and when required.
\begin{alignat}{11}
\bs{\chi_{\ell, n}^{(j)}}= &{\bs \chi_1} +&{\bs \chi_2} -& {\bs \chi_3} &&&&&&&\label{eq:1}\\
&&&{\bs \chi_3} = &{\bs \chi'_3} +&\boldsymbol{\varepsilon}_{3}&&&&&\label{eq:2}\\
&&&\phantom{\chi_3 =} &{\bs \chi'_3} = &\ {\bs \chi_4} +&{\bs \chi_5} +& \boldsymbol{\varepsilon}'_{3}&&&\label{eq:3}\\
&&&&&&{\bs \chi_5} = &{\bs \chi_6} -&{\bs \chi_7} &+ \boldsymbol{\varepsilon}_{6} &- \boldsymbol{\varepsilon}_{7}\label{eq:4}
\end{alignat}
Gathering the previous equations, we will end up with 
\begin{equation}
\label{eq-xi-lj-sum}
\bs{\chi_{\ell, n}^{(j)}} = {\bs \chi_1} +{\bs \chi_2} -{\bs \chi_4} - {\bs \chi_6} +{\bs \chi_7} +\boldsymbol{\varepsilon}\quad \textrm{where}\quad
\boldsymbol{\varepsilon}= -\boldsymbol{\varepsilon}_3 +\boldsymbol{\varepsilon}'_3 
-\boldsymbol{\varepsilon}_6 +\boldsymbol{\varepsilon}_7.
\end{equation} 
Let us first give decomposition (\ref{eq:1}) and introduce ${\bs \chi_1}$, ${\bs \chi_2}$ 
and ${\bs \chi_3}$. Recall that $B$ (defined in Section \ref{sec-outline-CLT}) is the $N\times N$ diagonal matrix
$B=\mathrm{diag}(b_i)$ where 
$b_i=\left(\rho(1+\frac 1n \tr \ti D_i \ti C)\right)^{-1}$. 
The following identity yields:   
\begin{eqnarray*}
Q&=& B+B(B^{-1} -Q^{-1})Q \\
&=& B + B\left(\rho \, 
\mathrm{diag}\left( \frac 1n \tr \ti D_i \ti C\right) 
-Y Y^*\right)Q.
\end{eqnarray*}
Therefore, 
\begin{eqnarray*}
\bs{\chi_{\ell, n}^{(j)}} &=& \frac 1n \tr D_{\ell} (\E_{j+1} Q) D_j Q\\
&=&  \frac 1n \tr D_{\ell} B D_j Q + 
 \frac {\rho}n \tr D_{\ell} B 
\diag \left( \frac{1}{n} \tr \tilde D_i \tilde C\right) 
\left( \E_{j+1} Q \right) D_j Q \\
&& \qquad \qquad \qquad - \frac 1n \tr D_{\ell} B \left( 
\sum_{m=1}^n \E_{j+1} y_m y_m^* Q \right) D_j Q \\
&\stackrel{\triangle}{=}& {\bs \chi_1} +{\bs \chi_2} -{\bs \chi_3}\ ,
\end{eqnarray*}
and (\ref{eq:1}) is established. We now turn to 
decomposition (\ref{eq:2}). Identities (\ref{inversion-lemma}) and 
(\ref{eq-qii}) yield:
$$
y^*_m Q = y^*_m Q_m - y^*_m \frac{Q_m y_m y_m^* Q_m}{1+y_m^* Q_m y_m} \ =\ \frac{y^*_m Q_m}{1+y^*_m Q_m y_m}
= \rho [\ti Q]_{m m} y^*_m Q_m\ .
$$
Using this equation, we have 
\begin{eqnarray*}
{\bs \chi_3 }&=& \frac 1n \tr D_{\ell} B \left( \sum_{m=1}^n \E_{j+1} y_m y_m^* Q\right) D_j Q \\
      &=& \frac{\rho}{n} \tr D_{\ell} B \left( 
\sum_{m=1}^n \E_{j+1} \left( [\ti Q]_{m m} y_m y_m^* Q_m \right) \right)D_j Q\\
&\stackrel{(a)}{=}& \frac{\rho}{n} \tr D_{\ell} B \left( 
\sum_{m=1}^n \tilde{c}_m \E_{j+1} \left( y_m  y_m^* Q_m \right) \right)D_j Q
\\
& & 
- \frac{\rho^2}{n} \tr D_{\ell} B \left( 
\sum_{m=1}^n \tilde{c}_m 
\E_{j+1} \left( [\ti Q]_{m m} 
\left( y_m^* Q_m y_m - \frac 1n \tr D_m \E Q \right) 
y_m  y_m^* Q_m \right) \right)D_j Q \\ 
&\stackrel{\triangle}{=}& {\bs \chi'_3} + \boldsymbol{\varepsilon}_{3}\ ,
\end{eqnarray*}
where $(a)$ follows directly from \eqref{eq-identite-tildeQX}. 
We are now in position to prove that :
\begin{equation}
\label{eq-epsilon3}
\max_{\ell, j\le n} \E |\boldsymbol{\varepsilon}_{3}| \le \frac K{\sqrt{n}}
\ . 
\end{equation}
Using the fact that $|\tr A y y ^* B | = |y^* B A y| \le \| A B\|\, \|y\|^2$ together with the norm inequality 
$\| A B\| \le \|A \|\, \|B\|$, we obtain:  
\begin{eqnarray*}
|\boldsymbol{\varepsilon}_{3}| &\le & 
\frac{\rho^2}{n} \sum_{m=1}^n \| D_j Q D_{\ell} B\| 
\ti c_m \E_{j+1} \left( [\ti Q]_{m m} \left| y^*_m Q_m y_m 
-\frac 1n \tr D_m \E Q \right| \| y_m \|^2 \| Q_m \| \right),\\
&\stackrel{(a)}{\le} & 
\frac{\sigma_{\max}^4}{\rho^3} 
\frac 1n \sum_{m=1}^n 
\E_{j+1} \left( \left| y^*_m Q_m y_m 
-\frac 1n \tr D_m \E Q \right| \| y_m \|^2 \right),
\end{eqnarray*}
where $(a)$ follows from the fact that $\|D_j Q D_{\ell} B\| \ti c_m
\le \sigma_{\max}^4 \rho^{-3}$ and $[\ti Q]_{mm} \| Q_m \| \leq
\rho^{-2}$.  Writing $\frac{1}{n} \tr D_m \E Q = \frac 1n \tr D_m Q +
\frac 1n \tr D_m (\E Q -Q)$ and replacing in the previous inequality,
we obtain:
\begin{multline*}
\E \left( \left|y^*_m Q_m y_m -\frac 1n \tr D_m \E Q \right|
\| y_m \|^2 \right) \\
\le \left( \left[ 
\E \left|y^*_m Q_m y_m -\frac 1n \tr D_m Q \right|^2 \right]^{\frac 12}
+ \left[ \E \left| \frac 1n \tr D_m (Q-\E Q) \right|^2 \right]^{\frac 12}
\right)
\left( \E \| y_m \|^4 \right)^{\frac 12}\ ,
\end{multline*}
where $\left[ \E \left|y^*_m Q_m y_m -\frac 1n \tr D_m Q \right|^2
\right]^{\frac 12} \le K n^{-\frac 12}$ by Lemma
\ref{lm-approx-quadra}-(\ref{silverstein}) combined with Lemma
\ref{lm-approximations-EQ-C-B}-(\ref{superlemma-item:rank1}), $\left[ \E
  \left| \frac 1n \tr D_m (Q-\E Q) \right|^2 \right]^{\frac 12} \le
Kn^{-1}$ by Lemma
\ref{lm-approximations-EQ-C-B}-(\ref{eq-approx:(Q-EQ)2}) and $\E \|y_m
\|^4 \le \sigma_{\max}^4 \E | X_{11} |^4 (Nn^{-1})^2$.  This in
particular yields $\max_{\ell, j\le n} \E
|\boldsymbol{\varepsilon}_{3}| \le K n^{-\frac 12}$ and proves
(\ref{eq-epsilon3}).

Recall that if $m\le j$, then 
$$\mathbb{E}_{j+1}(y_m y_m^* Q_m)=\mathbb{E}_{j+1}(y_m y_m^*) \mathbb{E}_{j+1}(Q_m)=\frac{D_m}n \mathbb{E}_{j+1} (Q_m).
$$
We now turn to Equation (\ref{eq:3}) and introduce ${\bs \chi_4}$, ${\bs \chi_5}$ and 
$\boldsymbol{\varepsilon}_{3}'$. 
\begin{eqnarray*}
{\bs \chi'_3} & = & \frac{\rho}{n} \tr D_{\ell} B \left( 
\sum_{m=1}^n \tilde{c}_m \E_{j+1} \left( y_m  y_m^* Q_m \right) \right)D_j Q \ , \\
&=& \frac{\rho}{n^2} \tr D_{\ell} B \left( 
\sum_{m=1}^j \tilde{c}_m D_m \E_{j+1} Q_m  \right)D_j Q\\
&&\qquad + \frac{\rho}{n} \tr D_{\ell} B \left( 
\sum_{m=j+1}^n \tilde{c}_m \E_{j+1} \left( y_m  y_m^* Q_m \right) \right)D_j Q \ ,\\
&=& \frac{\rho}{n^2} \tr D_{\ell} B \left( 
\sum_{m=1}^j \tilde{c}_m D_m \E_{j+1} Q  \right)D_j Q\\
&&\qquad + \frac{\rho}{n} \tr D_{\ell} B \left( 
\sum_{m=j+1}^n \tilde{c}_m \E_{j+1} \left( y_m  y_m^* Q_m \right) \right)D_j Q\ ,\\
&& \qquad \qquad + \frac{\rho}{n^2} \tr D_{\ell} B \left( 
\sum_{m=1}^j \tilde{c}_m D_m \E_{j+1} \left( Q_m-Q \right) 
\right)D_j Q
\quad \stackrel{\triangle}{=}\quad {\bs\chi_4} + {\bs \chi_5} +\bv_3' \ ,
\end{eqnarray*}
and decomposition (\ref{eq:3}) is introduced. In order to estimate
$\bv_3'$, recall that given two square matrices $R$ and $S$, one has
$|\tr RS | \le \|R\| \tr S$ for $S$ non-negative and Hermitian. As matrix
$Q_m - Q$ is non-negative and hermitian by \eqref{inversion-lemma}, we obtain:
\begin{equation}
\label{eq-epsilon-prime3}
| \bv_3'| \le 
\frac{1}{n^2} 
\| D_{\ell} B D_j Q \| 
\sum_{m=1}^j \E \left( \tr D_m \left( Q_m-Q \right) \right)
\le 
\frac{\sigma_{\max}^6}{n \rho^3} 
\end{equation} 
by Lemma \ref{lm-approximations-EQ-C-B}-(\ref{superlemma-item:rank1}).  \\

We now turn to ${\bs \chi_5}$ and provide decomposition (\ref{eq:4}).
Recall that $\tr Ay y^* B= y^* B A y$. 
Combining (\ref{inversion-lemma}) and \eqref{eq-qii}, we get
$Q=Q_m -\rho [\ti Q]_{m m} Q_m y_m y_m^* Q_m$. Plugging this expression
into the definition ${\bs \chi_5}$ and using the fact that $y_m$ is
measurable with respect to ${\mathcal F}_{j+1}$ (since $m \ge j+1$), we
obtain:
\begin{eqnarray*}
\chi_5 &=& \frac{\rho}{n} \tr D_{\ell} B \left( 
\sum_{m=j+1}^n \tilde{c}_m \E_{j+1} \left( y_m  y_m^* Q_m \right) 
 \right)D_j Q\\
      &=& \frac{\rho}{n}   
\sum_{m=j+1}^n \tilde{c}_m y_m^* \left( \E_{j+1} Q_m \right) 
 D_j Q_m D_{\ell} B y_m \\
&& \quad - \frac{\rho^2}{n} \sum_{m=j+1}^n \tilde c_m [\ti Q]_{m m} y^*_m  \left( \E_{j+1} Q_m \right) 
D_j Q_m y_m y_m^* Q_m D_{\ell} B y_m\ .
\end{eqnarray*}
In order to understand the forthcoming decomposition, recall that
asymptotically $y^*_m A_m y_m \sim \frac 1n \tr D_m A_m$ as long as
$y_m$ and $A_m$ are independent, and that $\frac 1n \tr D_m A_m\sim \frac 1n \tr D_m A$
if $A_m$ is a rank-one perturbation of $A$. We can now introduce ${\bs \chi_6}$ and ${\bs \chi_7}$:
\begin{eqnarray*}
{\bs \chi_5} &=& \frac{\rho}{n}   
\sum_{m=j+1}^n \frac{\tilde{c}_m}{n}  \tr D_{\ell} B D_m \left( \E_{j+1} Q \right)D_j Q \\
&& \quad - \frac{\rho^2}{n} \sum_{m=j+1}^n \frac{\tilde c_m^2}{n} \tr D_m \left( \E_{j+1} Q \right) 
D_j Q \times \frac 1n \tr ( D_m Q D_{\ell} B ) + \bv_6 -\bv_7\\
&\stackrel{\triangle}{=}& {\bs \chi_6} -{\bs \chi_7} + \bv_6 -\bv_7\ ,
\end{eqnarray*}
where
\begin{eqnarray*}
\bv_6 &=& 
 \frac{\rho}n 
\sum_{m=j+1}^n \tilde{c}_m \ \ y_m^* \left( \E_{j+1} Q_m \right) D_j 
Q_m D_{\ell} B y_m \\
&&\qquad - \frac{\rho}{n}   
\sum_{m=j+1}^n \frac{\tilde{c}_m}{n}  \tr D_{\ell} B D_m \left( \E_{j+1} Q \right)D_j Q\\
\bv_7 &=& 
 \frac{\rho^2}{n} 
\sum_{m=j+1}^n \tilde{c}_m [ \tilde Q ]_{mm} \ \ 
y_m^* \left( \E_{j+1} Q_m \right) D_j Q_m y_m \ \ 
y_m^* Q_m D_{\ell} B y_m \\
&&\qquad - \frac{\rho^2}{n} \sum_{m=j+1}^n \frac{\tilde c_m^2}{n} \tr D_m \left( \E_{j+1} Q \right) 
D_j Q \times \frac 1n \tr ( D_m Q D_{\ell} B ) \ . 
\end{eqnarray*} 
It is now a matter of routine to check that:
\begin{equation}
\label{eq-epsilon-6-7}
\E| \bv_6 | \leq \frac{K}{\sqrt{n}} 
\qquad \mathrm{and} \qquad 
\E| \bv_7 | \leq \frac{K}{\sqrt{n}} \ . 
\end{equation} 
Let us provide some details. 

Recall that $y_m$ is independent from $\mathbb{E}_{j+1}(Q_m)$. To
obtain the bound on $\E |\bv_6|$, we use the facts that $\E ( y_m^*
\left( \E_{j+1} Q_m \right) D_j Q_m D_{\ell} B y_m - \frac 1n \tr
D_{\ell} B D_m \left( \E_{j+1} Q_m \right)D_j Q_m )^2 \leq K n^{-1}$
by Lemma \ref{lm-approx-quadra}-(\ref{silverstein}), $ | \frac 1n \tr
D_{\ell} B D_m \left( \E_{j+1} Q_m \right)D_j ( Q_m - Q ) | \leq
Kn^{-1}$ by Lemma
\ref{lm-approximations-EQ-C-B}-(\ref{superlemma-item:rank1}), etc.

In order to prove that $\E | \bv_7 | \leq
Kn^{-\frac 12}$, we use similar arguments but we also need two additional estimates. The control
$[\ti Q ]_{mm}-\ti c_m$ which has already been done while estimating 
$\bv_3$ relies on \eqref{eq-identite-tildeQX}.  
The bounded character of $\E (y_m^* A_m y_m)^2$ where $A_m$ is
independent of $y_m$ and of finite spectral norm.  This is a
by-product of Lemma \ref{lm-approx-quadra}-(\ref{silverstein}).

We now put the pieces together and provide Eq. (\ref{eq-xi-lj-sum})
satisfied by ${\bs \chi_{\ell j}}$. Recall that
\begin{eqnarray*}
{\bs \chi_1} &=&  \frac 1n \tr D_{\ell} B D_j Q \\
{\bs \chi_2} &=&
 \frac {\rho}n \tr D_{\ell} B\  
\diag \left( \frac{1}{n} \tr \tilde D_i \tilde C\right) 
\left( \E_{j+1} Q \right) D_j Q \\
{\bs \chi_4} &=&
\frac{\rho}{n^2} \tr D_{\ell} B \left( 
\sum_{m\le j} \tilde{c}_m D_m  \right) (\E_{j+1} Q ) D_j Q\\
{\bs \chi_6} &=&
\frac{\rho}{n^2} \tr D_{\ell} B \left( 
\sum_{m=j+1}^n \tilde{c}_m D_m  \right) (\E_{j+1} Q ) D_j Q\\
{\bs \chi_7} &=&
\frac{\rho^2}{n} \sum_{m=j+1}^n \frac{\tilde c_m^2}{n} 
\tr D_m \left( \E_{j+1} Q \right) 
D_j Q \times \frac 1n \tr ( D_m Q D_{\ell} B )  \\
&=&
\frac{1}{n} \sum_{m=j+1}^n 
\frac{\frac 1n \tr ( D_{\ell} B D_m Q)}{(1 + \frac 1n \tr D_m \E Q)^2}
 {\bs \chi_{m,j}}  \ . 
\end{eqnarray*}
As $ \frac{1}{n} \sum_{m=1}^n \tilde c_m D_m = 
\diag\left( \frac{1}{n} \tr \tilde D_1 \tilde C, \ldots, 
\frac{1}{n} \tr \tilde D_N \tilde C \right) $, 
we have ${\bs \chi_2} - {\bs \chi_4} - {\bs \chi_6} = 0$, and (\ref{eq-xi-lj-sum}) becomes
\[
\bs{\chi_{\ell, n}^{(j)}} =  \frac 1n \tr D_{\ell} B D_j Q 
+
\frac{1}{n} \sum_{m=j+1}^n 
\frac{\frac 1n \tr ( D_{\ell} B D_m Q)}{(1 + \frac 1n \tr D_m \E Q)^2}
 \bs{\chi_{m, n}^{(j)}}  
\ + \ \bv\ , 
\]
where $\E | \bv | \leq Kn^{-\frac 12}$ thanks to Inequalities
(\ref{eq-epsilon3}), (\ref{eq-epsilon-prime3}), and
(\ref{eq-epsilon-6-7}). Small adjustments need to be done in order to
obtain (\ref{eq-xi-lj}). Now replace $\frac 1n \tr D_{\ell} B D_p Q$
by $\frac 1n \tr D_{\ell} B D_p \E Q$ (use Lemma
\ref{lm-approximations-EQ-C-B}-(\ref{eq-approx:(Q-EQ)2})). The new
error term $\bs{\varepsilon_{3,\ell j}}$ still satisfies $\max_{\ell,
  j\le n} \E |\bs{\varepsilon_{3,\ell,j}}| \leq Kn^{-\frac 12}$. Eq.
(\ref{eq-xi-lj}) is proved.
\end{proof}

\begin{proof}[Proof of (\ref{eq-zeta_jj-x_jj})] 
  Recall that $\boldsymbol{\chi_{\ell, j}}$ and
  $\boldsymbol{\varepsilon_{3,\ell j}}$ have been introduced above.

Following the matrix framework introduced to express the system
satisfied by the $\bs{y}$'s (matrices $A_n$, $A_n^{(j)}$ and
$A_n^{0,(j)}$), we introduce matrix $G_n=A_n^T=(g_{\ell m})_{\ell,
  m=1}^n$, its $(n-j+1)\times (n-j+1)$ principal submatrix
$G_n^{(j)}=(g_{\ell, m})_{\ell, m=j}^n$ and the matrix $G_n^{0,(j)}$
which differs from matrix $G_n^{(j)}$ by its first column, equal to
zero. Denoting by $\bs{\delta_n^{(j)}}=\left( \frac 1n \tr D_{\ell} D_j T^2;\quad j\le \ell \le n\right)$, we have:
$$
\bs{x_n^{(j)}} = G_n^{0,(j)} \bs{x_n^{(j)}} + \bs{\delta_n^{(j)}}\ .
$$

Define here the $(n-j+1) \times 1$ vector
  $\boldsymbol{\varepsilon_3^{(j)}} = (\boldsymbol{\varepsilon_{3,\ell
      j}};\ j\le \ell\le n) $ and the $(n-j+1) \times 1$ vectors:
\begin{eqnarray*}
\boldsymbol{\chi^{(j)}} &=& (\boldsymbol{\chi_{\ell, n}^{(j)}};\ j\le \ell\le n)\ ,\\
\bs{\breve{\delta}^{(j)}} & =&  \left( \frac 1n \tr D_{\ell} B D_j \E Q;\  j \leq \ell \leq n \right)\ .
\end{eqnarray*}
Define now the $(n-j+1) \times (n-j+1)$ matrix:
\begin{eqnarray*}
\breve{G}^{(j)} &= &\left( \frac{\frac{1}{n^2} \tr D_{\ell} B D_m \E Q }{\left( 
1 + \frac 1n \tr D_{m} \E Q\right)^2}
\right)_{\ell,m=j}^n\ ,\\
\end{eqnarray*}
and $\breve{G}^{0,(j)}$ which is egal to $\breve{G}^{(j)}$ exept for
its first column equal to zero. With these notations, Eq.
(\ref{eq-xi-lj}), valid for for $j\le \ell \le n$, can take the
following matrix form:
$$
\boldsymbol{\chi^{(j)}} = \breve{G}^{0,(j)} \boldsymbol{\chi^{(j)}} + 
\bs{\breve{\delta}^{(j)}} + \boldsymbol{\varepsilon_3^{(j)}} \ .
$$
We will heavily rely on the following: 
$$
\limsup_n \leftrownorm \left( I - G^{0,(j)} \right)^{-1} \rightrownorm_\infty <\infty\ .
$$
which can be proved as in Lemma \ref{lm-(I-A)regular}-(\ref{quatre})
and Lemma \ref{lm-properties-A}. We drop superscript $^{0,(j)}$ in the
equation below for the sake of readability.
\begin{eqnarray}
  \lefteqn{{\bs \chi} = \breve{G} {\bs \chi} + \bs{\breve{\delta}} + 
    \boldsymbol{\varepsilon_3}} \nonumber \\
  &\Leftrightarrow& {\bs \chi} 
= G {\bs \chi} + \bs{\delta} + 
\boldsymbol{\varepsilon_3} + (\breve{G} - G){\bs \chi} 
+ (\bs{\breve{\delta}} -\bs{\delta})\ , \nonumber \\
  &\Leftrightarrow& {\bs \chi} = (I-G)^{-1} {\bs \delta}  + 
  (I - G)^{-1} \boldsymbol{\varepsilon_3} 
+ (I - G)^{-1} (\breve{G} - G) {\bs \chi} 
+ (I - G)^{-1} (\bs{\breve{\delta}} -\bs{\delta}) \ , \nonumber \\
  &\Leftrightarrow& {\bs \chi} = \bs{x}  + (I - G)^{-1} \boldsymbol{\varepsilon_3} 
+ (I - G)^{-1} (\breve{G} - G) {\bs \chi} 
+ (I - G)^{-1} (\bs{\breve{\delta}} -\bs{\delta}) 
\label{eq-chi-close-x} 
\end{eqnarray}
The first component of the previous equation writes:
\begin{eqnarray*}
\bs{\chi_{j}^{(j)}}&=& 
\bs{x_{j}^{(j)}} + [(I-G^{0,(j)})^{-1} \boldsymbol{\varepsilon_3}]_1 \\
&\phantom{=}& \qquad +\big[ (I-G^{0,(j)})^{-1} (\breve{G}^{0,(j)} - G^{0,(j)}) {\bs \chi} 
+ (I-G^{0,(j)})^{-1} (\bs{\breve{\delta}} -\bs{\delta}) \big]_1\ ,\\
&\stackrel{\triangle}{=}& \bs{x_{j}^{(j)}} +  \boldsymbol{\varepsilon_{41,j}} + \boldsymbol{\varepsilon_{42,j}}\ .
\end{eqnarray*}
Due to Lemma \ref{lm-(I-A)regular}-(\ref{quatre}) which applies to $G^{0,(j)}$ and to the fact that 
$\max_{\ell, j\le n} \E |\boldsymbol{\varepsilon_{3,\ell  j}} | \le 
K n^{-\frac 12}$, we have:
\[
\E |\boldsymbol{\varepsilon_{41,j}}  | 
\leq \sum_{m=1}^{n-j+1} 
[ (I - G^{0,(j)})^{-1} ]_{1,m}
 \ \E |  \boldsymbol{\varepsilon_{3,\ell j}}|
\leq \frac{K}{\sqrt{n}}  \ .
\]
The second error term $\boldsymbol{\varepsilon_{42,j}}$ is the sum of the following terms:
$$
\boldsymbol{\varepsilon_{42,j}}\quad  =\quad 
[ (I-G^{0,(j)})^{-1} (\breve{G}^{0,(j)} - G^{0,(j)}) \boldsymbol{\chi} ]_1  + [  (I-G^{0,(j)})^{-1} 
(\bs{\breve{\delta}} - \bs{\delta} ) ]_1 
$$
Let us first prove that 
$[ (I-G^{0,(j)})^{-1} (\breve{G}^{0,(j)} - G^{0,(j)}) \boldsymbol{\chi} ]_1$  is dominated 
by a sequence independent of $j$ that converges to zero 
as $n\rightarrow \infty$.
The mere definition of $\boldsymbol{\chi_{\ell, n}^{(j)}}$  
(see \eqref{DefChi}) yields
$\| \boldsymbol{\chi^{(j)}} \|_\infty \leq 
(N \sigma_{\max}^4)(n \rho^{2})^{-1}$, where $\|\cdot \|_{\infty}$ stands for the $\ell_{\infty}$-norm. Hence
\begin{multline*}
| [  (I - G^{0,(j)})^{-1} (\breve{G}^{0,(j)} - G^{0,(j)}) \boldsymbol{\chi} ]_1 | \\
\leq 
\leftrownorm (I-G^{0,(j)})^{-1} \rightrownorm_\infty 
\leftrownorm ( \breve{G^{0,(j)}} - G^{0,(j)} )^T \rightrownorm_\infty 
\| \boldsymbol{\chi} \|_\infty 
\leq K 
\leftrownorm ( \breve{G}^{0,(j)} - G^{0,(j)} )^T \rightrownorm_\infty \ .
\end{multline*}
Let us prove that
\begin{equation}
\label{eq-bA-A-to-zero} 
\leftrownorm (\breve{G}^{0,(j)} - G^{0,(j)})^T \rightrownorm_\infty 
\xrightarrow[n\to\infty]{} 0 
\end{equation} 
uniformly in $j$. To this end, 
let us evaluate the $(\ell,m)$-element of matrix $\breve{G}^{0,(j)} - G^{0,(j)}$ $(m>j)$: 
\begin{eqnarray}
n | [ \breve{G}^{0,(j)} - G^{0,(j)} ]_{\ell m} | &=&
\left|
\frac{\frac 1n \tr D_{\ell} B D_m \E Q}{(1+\frac 1n \tr D_m \E Q)^2} 
-
\frac{\frac 1n \tr D_{\ell} D_m T^2}{(1+\frac 1n \tr D_m T)^2} 
\right| \nonumber \\
&\leq&
\left| (1+\frac 1n \tr D_m T)^2 \frac 1n \tr D_{\ell} B D_m \E Q
-
(1+\frac 1n \tr D_m \E Q)^2 \frac 1n \tr D_{\ell} D_m T^2
\right| \nonumber \\
&\leq&
\phantom{+} \left| (1+\frac 1n \tr D_m T)^2 
\frac 1n \tr D_{\ell} B D_m (\E Q - T ) \right| \nonumber \\ 
& & 
+ 
\left| (1+\frac 1n \tr D_m T)^2 
\frac 1n \tr D_{\ell} T D_m (B - T ) \right| \nonumber \\
& &
+
\left| 
\left( (1+\frac 1n \tr D_m T)^2 - (1+\frac 1n \tr D_m \E Q)^2 \right) 
\frac 1n \tr D_{\ell} D_m T^2  \right| \ .
\label{eq-G-A} 
\end{eqnarray}
The first term of the right hand side of (\ref{eq-G-A})
satisfies:
$$
\left| (1+\frac 1n \tr D_m T)^2 
\frac 1n \tr D_{\ell} B D_m (\E Q - T ) \right| 
\leq
\left( 1 + \frac{\sigma_{\max}^2}{\rho} \right)^2 
\frac 1n \tr U ( \E Q - T ) \ ,
$$
where $U$ is the $N \times N$ diagonal matrix 
$U = \sigma_{\max}^4\,\rho^{-1} \diag\left( \mathrm{sign} \left( \E [Q]_{ii} 
- t_i \right), 1 \le i \le N \right)$. 
By Lemma \ref{lm-approximations-EQ-C-B}-(\ref{eq-approx:EQ-T}), the 
right hand side of this inequality converges to zero as $n\to \infty$. 

The second and third terms of the right hand side of (\ref{eq-G-A}) can 
be handled similarly with the help of Lemma \ref{lm-approximations-EQ-C-B} 
and one can prove that elements of $n ( \breve{G}^{0,(j)} - G^{0,(j)} )$ are dominated by a sequence 
independent of $j$ that converges to zero. This implies
that $\leftrownorm ( \breve{G}^{0,(j)} - G^{0,(j)} )^T \rightrownorm_\infty$ converges 
to zero uniformly in $j$ and (\ref{eq-bA-A-to-zero}) is proved.  
As a consequence, 
$[  (I - G^{0,(j)})^{-1} (\breve{G}^{0,(j)} - G^{0,(j)}) \chi ]_1$ is dominated by
a sequence independent of $j$ that converges to zero. The other term 
$[ (I-G^{0,(j)})^{-1} (\bs{\breve{\delta}} - \bs{\delta}) ]_1$  
in the expression of $\boldsymbol{\varepsilon_{42,j}}$ is handled similarly.
Eq. (\ref{eq-zeta_jj-x_jj}) is proved.  
\end{proof}

\begin{proof}[Proof of (\ref{eq-Ej+1(EjAj)^2-theta})] 
We rewrite Equation (\ref{eq-Ej+1(EjAj)^2}) as
$\E_{j+1} \left( \E_j \Gamma_j \right)^2 = \eta_{1,j} +\kappa \eta_{2,j} + 
\boldsymbol{\varepsilon_{2,j}}$ with 
\begin{eqnarray*} 
\eta_{1,j} &=& 
\frac{1}{n^2} 
\frac{1}{\left(1+\frac 1n \tr D_j \E Q \right)^2} 
\tr D_j \left( \E_{j+1} Q_j \right) 
D_j \left( \E_{j+1} Q_j \right) \phantom{\sum_{i=1}^N }\ , \\
\eta_{2,j} &=& 
\frac{1}{n^2\left(1+\frac 1n \tr D_j \E Q \right)^2} 
\sum_{i=1}^N \sigma^4_{ij} \left( \E_{j+1} [Q_j]_{ii} \right)^2\ , 
\end{eqnarray*} 
and we prove that $\sum_{j=1}^n \eta_{1,j} - \tilde{\mathcal V}_n \cvgP{ } 0$ and 
$\sum_{j=1}^n \eta_{2,j} - {\mathcal W}_n \cvgP{ }  0$ where $\tilde{\mathcal V}_n$ and ${\mathcal W}_n$ are defined in Section \ref{proof-variance}. To prove the first assertion, we first notice that
$$\tr D_j ( \E_{j+1} Q_j ) D_j ( \E_{j+1} Q_j ) 
= 
\E_{j+1} ( \tr D_j (\E_{j+1} Q_j) D_j Q_j ) 
= 
\E_{j+1} ( \tr D_j (\E_{j+1} Q) D_j Q ) + \bv
$$ 
with $|\bv| \leq 2 \sigma_{\max}^4 \rho^{-2}$ by 
Lemma \ref{lm-approximations-EQ-C-B}-(\ref{superlemma-item:rank1}).
Therefore, 
$$
\eta_{1,j} = \frac{\E_{j+1} \bs{\chi_{j, n}^{(j)}}}{\left( 1+\frac 1n
    \tr D_j \E Q\right)^2} + \frac{\bv}{\left( 1+\frac 1n \tr D_j \E
    Q\right)^2}\ .
$$ 
It remains to control the difference $\left( 1+\frac 1n \tr D_j \E
  Q\right)^{-2} -\left( 1+\frac 1n \tr D_j T\right)^{-2}$, to plug
(\ref{eq-zeta_jj-x_jj}) and one easily
obtains $\sum_{j=1}^n \eta_{1,j} - \tilde{\mathcal V}_n \cvgP{ } 0$. 

We now sketch the proof of $\sum_{j=1}^n \eta_{2,j} - {\mathcal W}_n \cvgP{ } 0$.
As in (\ref{eq-qii}), $[ Q_j ]_{ii}$ satisfies $ [ Q_j ]_{ii}
= -( z(1 + \xi_{i}^j \ti Q_i^j {\xi_i^j}^*))^{-1}$ where $\xi_i^j$
is the row $\xi_i$ without element $j$, and $\ti Q_i^j = ( {Y_i^j}^*
Y_i^j + \rho I_{n-1} )^{-1}$ where $Y_i^j$ is matrix $Y$ without row
$i$ and column $j$.  Using this identity and Lemmas
\ref{lm-approx-quadra}-(\ref{silverstein}) and \ref{lm-approximations-EQ-C-B}, we can show
that $[ Q_j ]_{ii}$ is approximated by $t_i$, which is key to prove
$\sum_{j=1}^n \eta_{2,j} - {\mathcal W}_n \cvgP{ } 0$.
\end{proof} 

\section{Proof of Theorem \ref{th-bias}} 
\label{sec-proof-bias}


We first provide an expression of the bias that involves the Stieltjes
transforms $\frac 1N \mathrm{Tr}\, Q$ and $\frac 1N \mathrm{Tr}\,T$.
By writing $\log\det(Y_n Y_n^* + \rho I_N) = N \log\rho +
\log\det(\frac 1\rho Y_n Y_n^* + I_N)$ and by taking the derivative of
$\log\det(\frac 1\rho Y_n Y_n^* + I_N)$ with respect to $\rho$, we
obtain
\[
\log\det(Y_n Y_n^* + \rho I_N) = N \log \rho 
+ N \int_\rho^\infty 
\left( \frac 1 \omega - \frac 1N \tr Q(-\omega) \right) d\omega  \ . 
\]
Since $\frac 1N \tr Q(z) \in {\mathcal S}(\R^+)$, we have 
$\frac{1}{\omega} - \frac 1N \tr Q(-\omega) \geq 0$ for $\omega > 0$. In fact, recall 
that $\|Q(-\omega)\|\le \omega^{-1}$ by Proposition \ref{prop-resolvent-properties}.  
Therefore, by Fubini's Theorem, 
\[
\E \log\det(Y_n Y_n^* + \rho I_N) = N \log \rho 
+ N \int_\rho^\infty 
\left( \frac 1 \omega - \frac 1N \tr \E Q(-\omega) \right) d\omega  \ . 
\]
Similarly,
\[
N V_n(\rho) = N \int \log(\lambda + \rho) \pi_n(d\lambda) 
= N \log \rho 
+ N \int_\rho^\infty 
\left( \frac 1 \omega - \frac 1N \tr T(-\omega) \right) d\omega .
\]
Hence the bias term is given by: 
\begin{equation}
\label{eq-Zn2-resolvent} 
{\mathcal B}_n(\rho) \stackrel{\triangle}{=} \E \log\det(Y_n Y_n^* + \rho I_N) - N V_n(\rho) =  
 \int_\rho^\infty 
\tr \left( T(-\omega) - \E Q(-\omega) \right) d\omega \ .
\end{equation} 
In Appendix \ref{anx-prf-Q-Qtilde}, we prove that:
\begin{equation}
\label{eq-tr(T-Q)=tr(T-Q)tilde} 
\tr ( T - Q ) = \tr (\ti T - \ti Q) \ . 
\end{equation} 
Therefore, we can also write the bias as: 
\begin{equation}
\label{eq-Zn^2-integral} 
{\mathcal B}_n(\rho) = \int_\rho^\infty 
\tr \left( \ti T(-\omega) - \E \ti Q(-\omega) \right) d\omega  \ . 
\end{equation} 
For technical reasons (and in order to rely on results of Section
\ref{proof-variance}), we use representation \eqref{eq-Zn^2-integral}
of the bias instead of \eqref{eq-Zn2-resolvent}. The proof of Theorem
\ref{th-bias} will rely on the study of the asymptotic behaviour of
the integrand in the right hand side of this equation.

As a by-product of Section \ref{proof-variance}, the existence and uniqueness of the solution of the
system of equations (\ref{eq-def-w-bias}) is straightforward. Indeed,  
define the $n \times 1$ vectors ${\bs w}$ and ${\bs p}$ as
\begin{eqnarray*}
{\bs w} &=& \left( {\bs w}_{j,n}; 1 \leq j \leq n \right) \ , \\ 
{\bs p} &=& \left( {\bs p}_{j,n}; 1 \leq j \leq n \right) \ .
\end{eqnarray*}
Then the system (\ref{eq-def-w-bias}) can be written in a matrix form as
\begin{equation}
\label{eq-system-w} 
{\bs w} = A {\bs w} + {\bs p}  \ .
\end{equation} 
Since $(I-A)$ is invertible for $n$ large enough, this proves Theorem
\ref{th-bias}--(\ref{th-bias-unique-solution}).

The rest of the proof will be carried out into four steps: 

\begin{enumerate}
\item
\label{sec-prf-bias-phi-psi}
We first introduce a perturbated version of the system \eqref{eq-system-w}.
For the reader's convenience, we recall the following notations: 
\[
\begin{array}{lclclcl} 
t_i &=& \displaystyle{
\frac{1}{\omega\left( 1 +\frac 1n \tr \ti D_i \ti T \right)}},  & & 
\ti t_j &=& 
\displaystyle{\frac1{\omega\left( 1 +\frac 1n \tr D_j T\right)}},  \\ 
c_i &=& \displaystyle{
\frac1{\omega\left( 1 +\frac 1n \tr \ti D_i \E \ti Q \right)}},  & & 
\ti c_j &=& 
\displaystyle{\frac1{\omega\left( 1 +\frac 1n \tr D_j \E Q \right)}}, \\ 
b_i &=& \displaystyle{
\frac1{\omega\left( 1 +\frac 1n \tr \ti D_i \ti C \right)}},  & & 
\ti b_j&=& 
\displaystyle{\frac1{\omega\left( 1 +\frac 1n \tr D_j C \right)}} ,
\end{array}
\] 
where $z$ is equal to $-\omega$ with $\omega\ge 0$. Write the integrand in 
\eqref{eq-Zn^2-integral} as 
\begin{equation}
\label{eq-def-varphi} 
\tr \left( \ti T(-\omega) - \E \ti Q(-\omega) \right) = 
\frac{1}{n} \sum_{j=1}^n {\bs \varphi}_j(\omega) 
\quad 
\mathrm{with} \quad {\bs \varphi}_j(\omega) \eqdef 
n ( \ti t_j(-\omega)  - \E [\ti Q(-\omega)]_{jj})\ . 
\end{equation} 
Let ${\bs \psi}^{(j)}(\omega) \eqdef n(\ti b_j(-\omega) - \E [\ti
Q(-\omega)]_{jj})$ and define the $n \times 1$ vectors ${\bs \varphi}$
and ${\bs \psi}$ and the $n \times n$ matrix $\breve{A}$ as
\begin{eqnarray*}
{\bs \varphi} &=& \left( {\bs \varphi}_j; 1 \leq j \leq n \right) \ , \\ 
{\bs \psi} &=& \left( {\bs \psi}^{(j)}; 1 \leq j \leq n \right) \ , \\ 
\breve{A} &=& \left(
\frac{ \frac{1}{n^2} \tr D_j D_m C T }
{ ( 1 + \frac 1n \tr D_j T) ( 1 + \frac 1n \tr D_j C) }
\right)_{j,m=1}^n 
\end{eqnarray*}
We first prove that
\begin{equation}
\label{eq-varphi-matrix} 
{\bs \varphi} = \breve{A} {\bs \varphi} + {\bs \psi} \ . 
\end{equation} 

\item
\label{sec-prf-bias-expression-psi}
We prove that 
\begin{equation}
\label{eq-psi-i} 
{\bs \psi}^{(j)} = \kappa \ \omega^2 \ti b_j \ti c_j \left( 
 \frac{\omega}{n} \sum_{i=1}^N \left( \sigma^2_{ij} c_i^3 
\frac 1n \sum_{m=1}^n \sigma^4_{im} \E [\ti Q_i]_{mm}^2 \right) 
-
\frac{\ti c_j}n \sum_{i=1}^N \sigma^4_{ij} \E [ {Q}_j ]_{ii}^2  \right) 
+ {\bs \varepsilon}^{(j)}  \ ,
\end{equation} 
with $|{\bs \varepsilon}^{(j)} | \leq K n^{-1/2}$ where $K$ is a constant that does 
not depend on $n$ nor on $j$ (but may depend on $\omega$). \\

\item
\label{sec-prf-bias-(phi-w)-to-0} 
Matrix $\breve{A}$ readily approximates $A$ and vector 
${\bs \psi}$ approximates ${\bs p}$ for large $n$ by Step \ref{sec-prf-bias-expression-psi}. 
Therefore, by inspecting Equations \eqref{eq-system-w} and 
\eqref{eq-varphi-matrix}, one may expect ${\bs \varphi}$ to be close to 
${\bs w}$. We prove here that 
\begin{equation}
\label{eq-(phi-w)-to-0}
\| {\bs \varphi} - {\bs w} \|_\infty 
\xrightarrow[n\to\infty, N/n \to c]{} 0 \ . 
\end{equation}

\item
\label{sec-prf-bias-dct} 
Let 
$\beta_n(\omega) = \frac 1n \sum_{j=1}^n {\bs w}_{j,n}(\omega)$. Eq.
\eqref{eq-(phi-w)-to-0} yields
$\frac 1n \sum_{j=1}^n {\bs \varphi}_j(\omega) - \beta_n(\omega)
\rightarrow 0 \ .$ 
In order to prove \eqref{eq-result-bias}, it remains to integrate and to provide a
Dominated Convergence Theorem argument. To this end, we shall prove that: 
\begin{equation}
\label{eq-dct-beta} 
\left| \beta_n(\omega) \right| \leq \frac{K'}{\omega^3} 
\end{equation} 
for $n$ large enough. This will establish \eqref{eq-integrability-beta}. We will also prove that
\begin{equation}
\label{eq-dct-phi} 
\left| \frac 1n \sum_{j=1}^n {\bs \varphi}_j(\omega) \right| \leq 
\frac{K'}{\omega^2} 
\end{equation} 
for $\omega \in [ \rho, + \infty)$, where $K'$ does not to depend on
$n$ nor on $\omega$. This will yield \eqref{eq-result-bias} and 
the proof of Theorem \ref{th-bias} will be completed.

\end{enumerate} 

\subsection{Proof of step \ref{sec-prf-bias-phi-psi}: Equation 
(\ref{eq-varphi-matrix})}  
\label{subsec-th-bias-expression-bias} 
Recall that ${\bs \psi}^{(j)} = n( \ti b_j - \E [\ti Q]_{jj})$. 
Using these expressions, we have for $1\le j\le n$:
\begin{eqnarray*} 
{\bs \varphi}_j &=& n( \ti t_j - \ti b_j)  + {\bs \psi}^{(j)} 
\quad = \quad 
  n \ti b_j \ti t_j \left( \ti b_j^{-1} - \ti t_j^{-1} \right) 
 \ + \ {\bs \psi}^{(j)} 
  \\ 
&=& 
\omega \ti b_j \ti t_j \tr D_j \left( C - T \right) 
\ + \ {\bs \psi}^{(j)}  \\ 
&=& 
\omega \ti b_j \ti t_j \sum_{i=1}^N \sigma^2_{ij} c_i t_i 
\left( t_i^{-1} - c_i^{-1} \right) \ + \ {\bs \psi}^{(j)} 
 \\
&=& 
\frac{ \omega^2 \ti b_j \ti t_j}{n^2} \sum_{i=1}^N \sum_{m=1}^n
\sigma^2_{ij} \sigma^2_{im} c_i t_i \ 
{\bs \varphi}_m \ + \ {\bs \psi}^{(j)} \\
&=& \omega^2 \ti b_j \ti t_j \sum_{m=1}^n \frac 1{n^2} \mathrm{Tr} (D_j D_m C T)\  {\bs \varphi}_m \ + \ {\bs \psi}^{(j)} , \\
\end{eqnarray*} 
which yields Eq. \eqref{eq-varphi-matrix}. 

\subsection{Proof of step \ref{sec-prf-bias-expression-psi}: Expression
of ${\bs \psi}^{(j)}$} 
We shall develop ${\bs \psi}^{(j)}$ as 
\begin{eqnarray}
{\bs \psi}^{(j)} &=& {\bs \psi}_1 \ + \ {\bs \psi}_2 \ - \ {\bs \psi}_3 
\label{eq-psi_i-123} \\
 & & {\bs \psi}_1 = {\bs \psi}_4 + {\bs \varepsilon}_1 
 \label{eq-psi1-4} \\
 & & 
 \ \ \ \ \ \ \ \ \ \, {\bs \psi}_2 = - {\bs \psi}_5 + {\bs \psi}_6 
 \label{eq-psi2-56} \\
& &
\ \ \ \ \ \ \ \ \ \ \ \ \ \ \ \ \ \ \ \, 
 {\bs \psi}_5 = {\bs \psi}_7
 + {\bs \varepsilon}_5 \label{eq-psi5-7} \\
 & & 
\ \ \ \ \ \ \ \ \ \ \ \ \ \ \ \ \ \ \ \ \ \ \ \ \ \ \ 
 {\bs \psi}_6 = {\bs \psi}_8 + {\bs \varepsilon}_6 \label{eq-psi6-8} \\
& &  
\ \ \ \ \ \ \ \ \ \ \ \ \ \ \ \ \ \ \ 
{\bs \psi}_3 = {\bs \psi}_9 + {\bs \varepsilon}_3
 \label{eq-psi3-9} 
\end{eqnarray} 
where the ${\bs \psi}_k$'s and the ${\bs \varepsilon}_k$'s will be introduced
when required. We shall furthermore  
prove that $| {\bs \varepsilon}_k | \leq K n^{-1/2}$ for $k=1,3,5,6$. 
This will yield
\begin{equation}
\label{eq-expression-psi_i} 
{\bs \psi}^{(j)} = {\bs \psi}_4 - {\bs \psi}_7 + {\bs \psi}_8 - {\bs \psi}_9 +
{\bs \varepsilon}^{(j)} \quad \mathrm{with} \quad 
| {\bs \varepsilon}^{(j)} | = | {\bs \varepsilon}_1 - {\bs \varepsilon}_3 
- {\bs \varepsilon}_5 + {\bs \varepsilon}_6 | \leq \frac{K}{n^{1/2}} \ . 
\end{equation}
Let us begin with decomposition (\ref{eq-psi_i-123}): 
\begin{eqnarray}
{\bs \psi}^{(j)} &=& n \ti b_j \E \left( 
[\ti Q]_{jj} \left( [\ti Q]_{jj}^{-1} - \ti b_j^{-1} \right) \right) 
\nonumber \\
&\stackrel{(a)}{=}& 
n \omega \ti b_j \E \left( [\ti Q]_{jj} \left( 
y_j^* {Q}_j y_j - 
\frac{1}{n} \tr D_j C \right) \right) \nonumber \\
&\stackrel{(b)}{=}& 
n \omega \ti b_j \ti c_j \E \left( y_j^* {Q}_j y_j - 
\frac{1}{n} \tr D_j C \right) \nonumber \\
& & \ \ \ \ \ \ \ \ \ \ 
-  n \omega^2 \ti b_j \ti c_j \E \left( [\ti Q]_{jj} \left( 
y_j^* Q_j y_j - \frac{1}{n} \tr D_j \E Q \right) 
\left( y_j^* Q_j y_j - 
\frac{1}{n} \tr D_j C \right) \right)   
\nonumber  \\
&\stackrel{(c)}{=}& 
\omega \ti b_j \ti c_j \tr D_j \E \left( Q_j - Q \right) 
+ \omega \ti b_j \ti c_j \tr D_j \left(\E Q - C \right)
\nonumber \\ 
& & \ \ \ \ \ \ \ \ \ \ 
-  n \omega^2 \ti b_j \ti c_j \E \left( [\ti Q]_{jj} \left( 
y_j^* Q_j y_j - \frac{1}{n} \tr D_j \E Q \right) 
\left( y_j^* Q_j y_j - 
\frac{1}{n} \tr D_j C \right) \right)   
\nonumber \\
&\eqdef& 
{\bs \psi}_1  +  {\bs \psi}_2 -  {\bs \psi}_3  \nonumber 
\end{eqnarray} 
where $(a)$ follows from (\ref{eq-qii}) and the definition of $\ti b_j$, 
$(b)$ follows from identity (\ref{eq-identite-tildeQX}), and $(c)$ 
follows from the following equality:
\[ 
\E \left( y_j^* Q_j y_j - \frac{1}{n} \tr D_j C \right)
= 
\frac 1n \tr D_j \left( \E Q_j -  C \right) 
= 
\frac 1n \tr D_j \E \left( Q_j - Q \right) 
+ \frac 1n \tr D_j \left( \E Q - C \right) \ .
\] 
Eq. (\ref{eq-psi_i-123}) is established.

We now turn to the decomposition (\ref{eq-psi1-4}). Combining  
(\ref{inversion-lemma}) and (\ref{eq-qii}), we obtain  
$Q = Q_j - \omega [\ti Q ]_{jj} Q_j y_j y_j^* Q_j$, hence 
$ {\bs \psi}_1 = 
\omega^2 \ti b_j \ti c_j \E \left( [ \ti Q ]_{jj} 
y_j^* Q_j D_j Q_j y_j \right)$. 
Using identity (\ref{eq-identite-tildeQX}) and the fact that  
$\E ( y_j^* Q_j D_j Q_j y_j ) = \frac 1n \E (\tr D_j {Q}_j {D}_j {Q}_j)$, 
we obtain: 
\begin{eqnarray*} 
{\bs \psi}_1  &=& 
\frac{\omega^2}{n} \ti b_j \ti c_j^2  
\E \left( \tr D_j {Q}_j {D}_j {Q}_j \right) \\
& & 
\ \ \ \ \ \ \ \ \ \ \ \ \ \ \ \ 
- \omega^3 \ti b_j \ti c_j^2 
\E \left( 
[ \ti Q ]_{jj} \left( y_j^* Q_j y_j - \frac 1n \tr D_j \E Q \right) 
\left( y_j^* Q_j D_j Q_j y_j \right) \right) \\ 
&\eqdef& 
{\bs \psi}_4 + {\bs \varepsilon}_1 \ . \nonumber  
\end{eqnarray*}
We have:
\[ 
| {\bs \varepsilon}_1 | \leq 
\frac{1}{\omega} 
\E \left( y_j^* Q_j D_j Q_j y_j 
\left| 
{\bs \varepsilon}_{11} + {\bs \varepsilon}_{12} + {\bs \varepsilon}_{13} 
\right| \right) \ ,
\]
with 
${\bs \varepsilon}_{11} = 
y_j^* Q_j y_j - \frac 1n \tr D_j Q_j$, 
${\bs \varepsilon}_{12} = 
\frac 1n \tr {D}_j \left( {Q}_j - \E {Q}_j \right)$, and
${\bs \varepsilon}_{13} = 
\frac 1n \tr {D}_j \E \left( {Q}_j - {Q} \right)$. 
By Lemmas 
\ref{lm-approx-quadra}-(\ref{silverstein}), 
\ref{lm-approximations-EQ-C-B}--(\ref{eq-approx:(Q-EQ)2}), and 
\ref{lm-approximations-EQ-C-B}--(\ref{superlemma-item:rank1}), we have
$\E | {\bs \varepsilon}_{11} |^2 \leq K n^{-1}$, 
$\E | {\bs \varepsilon}_{12} |^2 \leq K n^{-2}$, and 
$| {\bs \varepsilon}_{13} |^2 \leq K n^{-2}$ respectively. 
By Cauchy-Schwarz inequality, we therefore have:
\[
| {\bs \varepsilon}_1 | \leq \frac{K \left(\mathbb{E} (y^*_j Q_j D_j Q_j y_j)^2  \right)^{\frac 12}}{\sqrt{n}}
\le \frac{K'}{\sqrt{n}}\ ,  
\]
and (\ref{eq-psi1-4}) is established. 

We now establish decomposition (\ref{eq-psi2-56}): 
\begin{eqnarray*} 
{\bs \psi}_2 &=& 
\omega \ti b_j \ti c_j \tr D_j  \left(\E Q - C \right)  \\
&=& 
\omega \ti b_j \ti c_j \sum_{i=1}^N \sigma^2_{ij} {c}_i 
\E \left( [Q]_{ii} \left( c_i^{-1} - [Q]_{ii}^{-1}
\right) \right)  \\ 
&=& 
- \omega^2 \ti b_j \ti c_j \sum_{i=1}^N \sigma^2_{ij} c_i 
\E \left( 
[Q]_{ii} \left( \xi_i \ti Q_i \xi_i^* - \frac 1n \tr \ti D_i \E \ti Q \right) 
\right)  \\ 
&\stackrel{(a)}{=}&
- \omega^2 \ti b_j \ti c_j \sum_{i=1}^N \sigma^2_{ij} c_i^2  
\left( \E \left( \xi_i \ti Q_i \xi_i^* \right) - 
\frac 1n \tr \ti D_i \E \ti Q  \right)  \\ 
& & \ \ \ \ \ 
+ \ \omega^3 \ti b_j \ti c_j \sum_{i=1}^N \sigma^2_{ij} c_i^2 
\E \left( 
[Q]_{ii} 
\left( \xi_i \ti Q_i \xi_i^* - \frac 1n \tr \ti D_i \E \ti Q \right)^2
\right) \\
&\eqdef& - {\bs \psi}_5 + {\bs \psi}_6   
\end{eqnarray*}
where $(a)$ follows from (\ref{eq-QiiX}). Equation  
(\ref{eq-psi2-56}) is established. 

Let us now turn to decomposition
(\ref{eq-psi5-7}). We have 
${\bs \psi}_5 = 
 \frac{\omega^2 \ti b_j \ti c_j}{n} \sum_{i=1}^N \sigma^2_{ij} c_i^2  
\tr \ti D_i \E \left( \ti Q_i - \ti Q \right)$. 
By similar arguments as those used for ${\bs \psi}_1$, we have:
\begin{eqnarray*} 
{\bs \psi}_5 &=& 
\frac{\omega^3 \ti b_j \ti c_j }{n^2} \sum_{i=1}^N \sigma^2_{ij} c_i^3  
\E \left( \tr \ti D_i \ti Q_i \ti D_i \ti Q_i \right)\\
&\phantom{=}& + \frac{\omega^3 \ti b_j \ti c_j }{n^2} \sum_{i=1}^N \sigma^2_{ij} c_i^2  
\E ([Q]_{ii} -c_i) \tr \ti D_i \ti Q_i \xi_i^* \xi_i \ti Q_i \\
&\eqdef&  
{\bs \psi}_7 + {\bs \varepsilon}_{5} 
\end{eqnarray*}
where $| {\bs \varepsilon}_{5} | \leq K n^{-\frac 12}$ and (\ref{eq-psi5-7}) is established. 

Turning to (\ref{eq-psi6-8}), 
we have 
\begin{eqnarray}
{\bs \psi}_6 &=& 
\omega^3 \ti b_j \ti c_j \sum_{i=1}^N \sigma^2_{ij} c_i^2 
\E \left( [Q]_{ii} 
\left( \xi_i \ti Q_i \xi_i^* - \frac 1n \tr \ti D_i \E \ti Q 
\right)^2 \right) \nonumber \\
&=& 
\omega^3 \ti b_j \ti c_j \sum_{i=1}^N \sigma^2_{ij} c_i^3 
\E \left( 
\xi_i \ti Q_i \xi_i^* - \frac 1n \tr \ti D_i \E \ti Q \right)^2 \nonumber \\
& & \ \ \ \ \ - \ 
\omega^4 \ti b_j \ti c_j \sum_{i=1}^N \sigma^2_{ij} c_i^3 
\E \left( [ Q ]_{ii}
\left( \xi_i \ti Q_i \xi_i^* - \frac 1n \tr \ti D_i \E \ti Q 
\right)^3 \right) \nonumber \\
&\eqdef& {\bs \psi}'_6  + {\bs \varepsilon}_{61} \ ,
\label{eq-psi6-psi6'} 
\end{eqnarray} 
using again (\ref{eq-QiiX}). The term ${\bs \varepsilon}_{61}$ 
satisfies: 
\begin{eqnarray*} 
| {\bs \varepsilon}_{61} | 
&\leq&  
\frac{1}{\omega^2} \sum_{i=1}^N \sigma^2_{ij} 
\E \left| {\bs \varepsilon}_{611,i} + {\bs \varepsilon}_{612,i} +  
{\bs \varepsilon}_{613,i} \right|^3 \\
&\leq&  
\frac{9}{\omega^2} \sum_{i=1}^N \sigma^2_{ij} 
\left( \E \left| {\bs \varepsilon}_{611,i} \right|^3 
+ \E \left| {\bs \varepsilon}_{612,i} \right|^3 +  
\left| {\bs \varepsilon}_{613,i} \right|^3 \right)\ ,  
\end{eqnarray*} 
where ${\bs \varepsilon}_{611,i} = 
\xi_i \ti Q_i \xi_i^* - \frac 1n \tr \ti D_i \ti Q_i$, 
${\bs \varepsilon}_{612,i} = \frac 1n \tr \ti D_i 
\left( \ti Q_i - \E \ti Q_i \right)$, and 
${\bs \varepsilon}_{613,i} = \frac 1n \tr \ti D_i 
\E \left( \ti Q_i - \ti Q \right)$. 
By Lemma \ref{lm-approx-quadra}-(\ref{silverstein}), $\E \left| {\bs \varepsilon}_{611,i} \right|^3
\leq K n^{-3/2}$. By Lemma 
\ref{lm-approximations-EQ-C-B}--(\ref{eq-approx:(Q-EQ)4}), 
$\E \left| {\bs \varepsilon}_{612,i} \right|^3 \leq 
\left(\E \left| {\bs \varepsilon}_{612,i} \right|^4\right)^{3/4} \leq 
K n^{-3/2}$. By Lemma 
\ref{lm-approximations-EQ-C-B}--(\ref{superlemma-item:rank1}), 
$\left| {\bs \varepsilon}_{613,i} \right|^3 \leq K n^{-3}$, hence 
\[
| {\bs \varepsilon}_{6,1} | \leq \frac{K}{\sqrt{n}} \ .
\] 
We now handle the term ${\bs \psi}_6'$ in (\ref{eq-psi6-psi6'}). We have:
\begin{eqnarray*}
{\bs \psi}'_6 &=& 
\omega^3 \ti b_j \ti c_j \sum_{i=1}^N \sigma^2_{ij} c_i^3 
\E \left( \xi_i \ti Q_i \xi_i^* - \frac 1n \tr \ti D_i \ti Q_i + 
{\bs \varepsilon}_{612,i} + {\bs \varepsilon}_{613,i}
\right)^2 \\
&=& 
\omega^3 \ti b_j \ti c_j \sum_{i=1}^N \sigma^2_{ij} c_i^3 
\E \left( \xi_i \ti Q_i \xi_i^* - \frac 1n \tr \ti D_i \ti Q_i \right)^2 + 
{\bs \varepsilon}_{62} \\
&\eqdef&
{\bs \psi}_8 + {\bs \varepsilon}_{62} \ ,
\end{eqnarray*} 
where 
\[
{\bs \varepsilon}_{62} = 
\omega^3 \ti b_j \ti c_j \sum_{i=1}^N \sigma^2_{ij} c_i^3 
\left(
\E \left( {\bs \varepsilon}_{612,i} + {\bs \varepsilon}_{613,i} \right)^2
+ 
2 \E \left(
\left( \xi_i \ti Q_i \xi_i^* - \frac 1n \tr \ti D_i \ti Q_i \right)
\left( {\bs \varepsilon}_{612,i} + {\bs \varepsilon}_{613,i} \right)
\right)
\right) . 
\]
Using Lemmas \ref{lm-approx-quadra}-(\ref{silverstein}) and \ref{lm-approximations-EQ-C-B}, it 
is easy to show that 
\[
\left| {\bs \varepsilon}_{62} \right| \leq \frac{K}{\sqrt{n}} \ . 
\]
Furthermore, the terms $\E \left( \phantom{Q} \right)^2$ in the expression of 
${\bs \psi}_8$ has a more explicit form. Indeed, applying Lemma \ref{lm-approx-quadra}-(\ref{identite-cumu}) yields:
\[
{\bs \psi}_8 = 
\frac{\omega^3 \ti b_j \ti c_j }{n^2} \sum_{i=1}^N \sigma^2_{ij} c_i^3 
\left( \E \left( \tr \ti D_i \ti Q_i \ti D_i \ti Q_i  \right) + 
\kappa \sum_{m=1}^n \sigma^4_{im} \E \left( [\ti Q_i]_{mm}^2 \right) 
\right) \ . 
\] 
Decomposition (\ref{eq-psi6-8}) is established with 
${\bs \varepsilon}_{6} = {\bs \varepsilon}_{61} + {\bs \varepsilon}_{62}$.
 
It remains to give decomposition (\ref{eq-psi3-9}). Using 
\eqref{eq-identite-tildeQX}, we have 
\begin{eqnarray*} 
{\bs \psi}_3 &=&  
 n \omega^2 \ti b_j \ti c_j \E \left( [\ti Q]_{jj} \left( 
y_j^* Q_j y_j - \frac 1n \tr D_j \E Q \right) 
\left( y_j^* Q_j y_j - \frac{1}{n} \tr {D}_j {C} \right) \right)   \\
&=&
 n \omega^2 \ti b_j \ti c_j^2  \E \left( \left( 
y_j^* Q_j y_j - \frac 1n \tr D_j \E Q \right) 
\left( y_j^* Q_j y_j - 
\frac{1}{n} \tr D_j C \right) \right)   \\
& & 
- n \omega^3 \ti b_j \ti c_j^2  
\E \left( [\ti Q]_{jj} \left( 
y_j^* Q_j y_j - \frac 1n \tr D_j \E Q \right)^2  
\left(  y_j^* Q_j y_j - 
\frac{1}{n} \tr D_j C \right) \right)   \\
&\eqdef& 
{\bs \psi}'_3 + {\bs \varepsilon}_{31} \ . 
\end{eqnarray*} 
The term ${\bs \varepsilon}_{31}$ satisfies 
\begin{multline*} 
\left| {\bs \varepsilon}_{31} \right| \leq 
\frac{n}{\omega} 
\E \left(  
\left| y_j^* Q_j y_j - \frac 1n \tr D_j Q_j 
+ {\bs \varepsilon}_{311} + {\bs \varepsilon}_{312} 
\right|^2 \right. \\ 
\times \left. \left| y_j^* Q_j y_j - \frac 1n \tr D_j Q_j + 
{\bs \varepsilon}_{311} + {\bs \varepsilon}_{312}+ {\bs \varepsilon}_{313}
\right| \right) 
\end{multline*}
with 
${\bs \varepsilon}_{311} = 
\frac 1n \tr D_j ( Q_j - \E Q_j)$,
${\bs \varepsilon}_{312} = 
\frac 1n \tr {D}_j \E \left( {Q}_j - {Q} \right)$, and
${\bs \varepsilon}_{313} = 
\frac 1n \tr {D}_j ( \E {Q} - C )$.  \\ 
The terms ${\bs \varepsilon}_{311}$, ${\bs \varepsilon}_{312}$, and 
$y_j^* Q_j y_j - \frac 1n \tr D_j Q_j$ can  
be handled by Lemmas \ref{lm-approx-quadra}-(\ref{silverstein}) and \ref{lm-approximations-EQ-C-B}.
The term ${\bs \varepsilon}_{313}$ coincides with
${\bs \psi}_2 (n \omega \ti b_j \ti c_j)^{-1}$. The derivations made on 
${\bs \psi}_2$ above (decompositions (\ref{eq-psi2-56}--\ref{eq-psi6-8})) 
show that $| {\bs \psi}_2 (\omega \ti b_j \ti c_j)^{-1} |  \leq K$ therefore 
$| {\bs \varepsilon}_{313} | \leq K n^{-1}$. \\

Using these results, we obtain after some standard manipulations: 
\[
\left| {\bs \varepsilon}_{31} \right| \leq \frac{K}{\sqrt{n}} \ . 
\]
The term ${\bs \psi}'_3$ can be written as:
\begin{eqnarray*}
{\bs \psi}'_3 &=&  
n \omega^2 \ti b_j \ti c_j^2 \E \left( \left( 
y_j^* Q_j y_j - \frac 1n \tr D_j Q_j
+ {\bs \varepsilon}_{311} + {\bs \varepsilon}_{312} \right) \right.  \\
& & 
\ \ \ \ \ \ \ \ \ \ \ \ \ \ \ \ \ \  \ \ \ \ \ \ \ \ 
\left.  \left( y_j^* Q_j y_j - \frac 1n \tr D_j Q_j
+ {\bs \varepsilon}_{311} + {\bs \varepsilon}_{312} + {\bs \varepsilon}_{313} 
 \right) \right)   \\
 &=& 
n \omega^2 \ti b_j \ti c_j^2 \E 
 \left( y_j^* Q_j y_j - \frac 1n \tr D_j Q_j \right)^2 
+ {\bs \varepsilon}_{32} \\
&\eqdef& {\bs \psi}_9 + {\bs \varepsilon}_{32} 
\end{eqnarray*} 
with $| {\bs \varepsilon}_{32} | \leq K n^{-1/2}$. Similarly to 
${\bs \psi}_8$, we can develop ${\bs \psi}_9$ to obtain
\begin{equation}
\label{eq-expression-psi9}
{\bs \psi}_9 = 
\frac{
\omega^2 \ti b_j \ti c_j^2 }{n} 
\left( \E ( \tr {D}_j {Q}_j {D}_j {Q}_j ) +
\kappa \sum_{i=1}^N \sigma^4_{ij} \E \left( [ {Q}_j ]_{ii}^2 \right)
\right) 
\end{equation} 
Decomposition (\ref{eq-psi3-9}) is established with 
${\bs \varepsilon}_3 = {\bs \varepsilon}_{31} + {\bs \varepsilon}_{32}$. 

We now put the pieces together and provide Eq. (\ref{eq-expression-psi_i})
satisfied by ${\bs \psi}^{(j)}$. We recall that
\begin{eqnarray*}
{\bs \psi}_4 &=& 
\frac{ \omega^2 \ti b_j \ti c_j^2}{n}  
\E \left( \tr D_j {Q}_j {D}_j {Q}_j \right)\ , \\
{\bs \psi}_7  &=&  
\frac{\omega^3 \ti b_j \ti c_j }{n^2} \sum_{i=1}^N \sigma^2_{ij} c_i^3  
\E \left( \tr \ti D_i \ti Q_i \ti D_i \ti Q_i \right)\ , \\ 
{\bs \psi}_8 &=&  
\frac{\omega^3 \ti b_j \ti c_j }{n^2} \sum_{i=1}^N \sigma^2_{ij} c_i^3 
\left( \E \left( \tr \ti D_i \ti Q_i \ti D_i \ti Q_i  \right) + 
\kappa \sum_{m=1}^n \sigma^4_{im} \E \left( [\ti Q_i]_{mm}^2 \right) 
\right)\ , \\ 
{\bs \psi}_9 &=& 
\frac{ 
\omega^2 \ti b_j \ti c_j^2}{n} 
\left( \E ( \tr {D}_j {Q}_j {D}_j {Q}_j ) +
\kappa \sum_{i=1}^N \sigma^4_{ij} \E \left( [ {Q}_j ]_{ii}^2 \right)
\right) \ . 
\end{eqnarray*} 
When computing the right hand side of (\ref{eq-expression-psi_i}), all 
terms of the form $\E \tr D_j Q_j D_j Q_j$ and 
$\E \tr \tilde{D}_i \tilde{Q}_i \tilde{D}_i \tilde{Q}_i$ cancel out and we
end up with Equation (\ref{eq-psi-i}). 
Step \ref{sec-prf-bias-expression-psi} is established. 

\subsection{Proof of step \ref{sec-prf-bias-(phi-w)-to-0}: 
$\| {\bs \varphi} - {\bs w} \|_\infty \to 0$} 
In order to prove \eqref{eq-(phi-w)-to-0}, we need the following facts:  
\begin{eqnarray} 
& & \leftrownorm ( \breve{A} - A )^T \rightrownorm_\infty 
\xrightarrow[n\rightarrow\infty]{} 0 ,
\label{eq-btiA-tiA-to-0} \\
& & \limsup_n \leftrownorm ( I - A )^{-1} \rightrownorm_\infty
< \infty, 
\label{eq-I-tiA-bounded} \\
& & I- \breve{A} \ \mathrm{is \ invertible \ for \ } n \ 
\mathrm{large \ enough}, 
\label{eq-I-btiA-invertible} \\
& & \limsup_n \leftrownorm ( I - \breve{A} )^{-1} \rightrownorm_\infty
< \infty \ . 
\label{eq-I-btiA-bounded}
\end{eqnarray} 
The proof of (\ref{eq-btiA-tiA-to-0}) is close to the proof of 
(\ref{eq-bA-A-to-zero}) above and is therefore omitted. The bound
(\ref{eq-I-tiA-bounded}) follows from Lemma 
\ref{lm-(I-A)regular}--(\ref{it-bound-maxrownorm}). We now prove 
(\ref{eq-I-btiA-invertible}) and (\ref{eq-I-btiA-bounded}). 
Recall that by Lemma \ref{lm-properties-A}, there exist two vectors 
$u_n = ( u_{\ell,n}) \succ 0$ and $v_n = ( v_{\ell,n}) \succ 0$ such that
$u_n = A u_n + v_n$, $\sup_n \| u_n \|_\infty < \infty$ and 
$\liminf_n \min_{\ell} (v_{\ell,n}) > 0$. 
Matrix $\breve{A}$ satisfies the equation 
$u_n = \breve{A} u_n + \breve{v}_n$ with 
$\breve{v}_n = ( \breve{v}_{\ell n} ) = v_n + (A - \breve{A} ) u_n$. 
Combining (\ref{eq-btiA-tiA-to-0}) with inequalities 
$\sup_n \| u_n \|_\infty < \infty$ and 
$\liminf_n ( \min_\ell v_{\ell n} )  > 0$, we have
$\liminf_n ( \min_\ell \breve{v}_{\ell n} )  > 0$. Therefore, Lemma
\ref{lm-(I-A)regular} applies to matrix $\breve{A}$ for $n$ large
enough which implies (\ref{eq-I-btiA-invertible}) and 
(\ref{eq-I-btiA-bounded}). \\

We are now in position to prove $\| {\bs \varphi} - {\bs w} \|_\infty \to 0$. 
Working out Eq. (\ref{eq-varphi-matrix}) and (\ref{eq-system-w}), we obtain:
$$
{\bs \varphi} = {\bs w} + (I-A)^{-1} ( \breve{A} - A ) 
{\bs \varphi} + (I-A)^{-1} ( {\bs \psi} - {\bs p} ),
$$ 
hence 
\[
\| {\bs \varphi} - {\bs w} \|_\infty  \leq 
\leftrownorm (I-A)^{-1} \rightrownorm_\infty 
\leftrownorm ( \breve{A} - A ) \rightrownorm_\infty  
\| {\bs \varphi} \|_\infty  + 
\leftrownorm (I-A)^{-1} \rightrownorm_\infty 
\| {\bs \psi} - {\bs p} \|_\infty  \ . 
\] 
Thanks to (\ref{eq-I-btiA-invertible}), we have 
${\bs \varphi} = ( I - \breve{A} )^{-1} {\bs \psi}$ for $n$ large enough.
One can check from (\ref{eq-psi-i}) that $\sup_n \| {\bs \psi} \|_\infty 
< \infty$. Therefore, by (\ref{eq-I-btiA-bounded}), we have 
$\sup_n \| {\bs \varphi} \|_\infty < \infty$. Using 
(\ref{eq-btiA-tiA-to-0}) and (\ref{eq-I-tiA-bounded}), we then have
$\leftrownorm (I-A)^{-1} \rightrownorm_\infty
\leftrownorm ( \breve{A} - A )   \rightrownorm_\infty
\| {\bs \varphi} \|_\infty \to 0$. 

It remains to prove that $\| {\bs \psi} - {\bs p} \|_\infty \to 0$. In
Step 3, it has been established that ${\bs \psi}$ is a perturbated
version of ${\bs p}$ as defined in \eqref{eq-p(omega)} in the sense of
Eq. \eqref{eq-psi-i}. Using the arguments developed in the course of
the proof of \eqref{eq-zeta_jj-x_jj}, it is a matter of routine to check $\| {\bs
  \psi} - {\bs p} \|_\infty \to 0$. Details are omitted. Hence
$$
\leftrownorm (I-A)^{-1} \rightrownorm_\infty
\| {\bs \psi} - {\bs p} \|_\infty \to 0.
$$ 
Consequently, 
$\| {\bs \varphi} - {\bs w} \|_\infty \to 0$ and Step 
\ref{sec-prf-bias-(phi-w)-to-0} is proved. 

\subsection{Proof of step \ref{sec-prf-bias-dct}: Dominated Convergence}  
In this section, constant $K'$ does not depend on $n$ neither on
$\omega$ but its value is allowed to change from line to line.  We
first prove \eqref{eq-dct-beta}. We have
\[
\left| \beta_n \right| \leq 
\| {\bs w} \|_\infty 
\leq \leftrownorm ( I - A)^{-1} \rightrownorm_\infty
\| {\bs p} \|_\infty 
\]
by \eqref{eq-system-w}. By inspecting (\ref{eq-p(omega)}) one obtains 
$\| {\bs p} \|_\infty \leq | \kappa | (N/n) 
( \frac{\sigma_{\max}^6}{\omega^4}
+ \frac{\sigma_{\max}^4}{\omega^3} ) \leq K' \omega^{-3}$. 
We need now to bound $\leftrownorm ( I - A )^{-1} \rightrownorm_\infty$ in 
terms of $\omega \in [ \rho, \infty)$. 
Lemma 
\ref{lm-(I-A)regular}--(\ref{it-bound-maxrownorm}) yield:
\[
\leftrownorm ( I - A )^{-1} \rightrownorm_\infty 
\leq 
\frac{\max_{\ell}(u_{\ell,n})}{\min_{\ell}(v_{\ell,n})}  
\] 
where 
$u_n = ( u_{\ell n} )$ and $v_n = ( v_{\ell n} )$ are the vectors given in 
the statement of Lemma \ref{lm-properties-A}. We now inspect the expressions
of $u_{\ell n}$ and $v_{\ell n}$. Eq. \eqref{eq-lower-trD2T2} yields: 
$$
\min_{\ell} ( v_{\ell,n} ) \geq 
\frac{1}{(\omega + \sigma_{\max})^2} \min_j \frac 1N \tr D_j ,
$$ 
and $\max_{\ell}(u_{\ell,n})  \leq (N \sigma_{\max}^2)(n \omega^2)^{-1} $ by 
\eqref{eq-u-v}. Gathering all these estimates, we obtain $|\beta_n|\le K' \omega^{-3}$.
and Inequality \eqref{eq-dct-beta} is proved.  

We now prove \eqref{eq-dct-phi}. We have 
\begin{equation}
\label{eq-bound-phi} 
| \frac 1n \sum_{j=1}^n {\bs \varphi}_{j} | \leq 
\| {\bs \varphi} \|_\infty 
\leq \leftrownorm ( I - \breve{A} )^{-1} \rightrownorm_\infty
\| {\bs \psi} \|_\infty 
\end{equation} 
by \eqref{eq-varphi-matrix} and \eqref{eq-I-btiA-invertible}. We know
that the right hand side is bounded as $n \to \infty$. However, not
much is known about the behaviour of the bound with respect to
$\omega$.  
Using Inequality \eqref{eq-bound-phi} and relying on the derivations
that lead to (\ref{eq-varphi-matrix}--\ref{eq-psi-i}),
one can prove that $\leftrownorm ( I - \breve{A} )^{-1}
\rightrownorm_\infty$, $\| {\bs \psi} \|_\infty$, and therefore $\|
{\bs \varphi} \|_\infty$ are bounded on the compact subsets of $[
\rho, + \infty)$. Therefore, in order to establish \eqref{eq-dct-phi},
it is sufficient to prove that $\| {\bs \varphi} \|_\infty$ is bounded
by $K'\,\omega^{-2}$ near infinity. To this end, we develop $|{\bs
  \varphi}_j(\omega)|$ as follows:
\begin{eqnarray*}
|{\bs \varphi}_j(\omega)| &=& n \ti t_j \left| \E \left( [\ti Q]_{jj} 
\left( [\ti Q]_{jj}^{-1} - \ti t_j^{-1} \right) \right) \right| \\
&=& 
n \omega \ti t_j \left|
\E \left( [\ti Q]_{jj} \left( y_j^* Q_j y_j - \frac 1n \tr D_j T \right) 
\right) \right|  \\
&\leq& 
\omega \ti t_j \E [\ti Q]_{jj}   
\left| \tr D_j \E \left( Q  - T \right) \right| 
+ 
n \omega \ti t_j \left|
\E \left( [\ti Q]_{jj} 
\left( y_j^* Q_j y_j - \frac 1n \tr D_j \E Q \right) 
\right) \right|  \\
&\stackrel{(a)}{\leq}& 
\omega \ti t_j  \E [\ti Q]_{jj}   
\left| \tr D_j \E \left( Q  - T \right) \right| 
+ 
n \omega \ti t_j \ti c_j  \left| 
\E\left( y_j^* Q_j y_j - \frac 1n \tr D_j \E Q \right)\right|
\\
& & 
\ \ \ \ \ \ \ \ \ \ \ \ \ \ \ \ \ \ \ \ 
\ \ \ \ \ \ \ \ \ \ \ \ \ \ + 
n \omega^2 \ti t_j \ti c_j \left| 
\E \left( [\ti Q]_{jj} 
\left( y_j^* Q_j y_j - \frac 1n \tr D_j \E Q \right)^2
\right) \right| \\
&\stackrel{(b)}{\leq}& 
\omega \ti t_j  \E [\ti Q]_{jj}   
\left| \tr D_j \E \left( Q  - T \right) \right| 
+ 
\omega \ti t_j \ti c_j \left| \E (\tr D_j ( Q_j -  Q )) \right| 
\\
& & 
\ \ \ \ \ \ \ \ \ \ \ \ \ \ \ \ \ \ \ \ 
\ \ \ \ \ \ \ \ \ \ \ \ \ \ + 
2 n \omega^2 \ti t_j \ti c_j
\E \left( [\ti Q]_{jj}  
\left( y_j^* Q_j y_j - \frac 1n \tr D_j \E Q_j \right)^2
\right) \\
& & 
\ \ \ \ \ \ \ \ \ \ \ \ \ \ \ \ \ \ \ \ 
\ \ \ \ \ \ \ \ \ \ \ \ \ \ + 
2 \frac{\omega^2 \ti t_j \ti c_j}{n} \E [\ti Q]_{jj}   
\left( \tr D_j \E ( Q_j  - Q)  \right)^2\ ,
\end{eqnarray*}
where $(a)$ follows from \eqref{eq-identite-tildeQX} and $(b)$, from
the fact that 
$$ 
( y_j^* Q_j y_j - \frac 1n \tr D_j \E Q )^2 \leq 2 (
y_j^* Q_j y_j - \frac 1n \tr D_j \E Q_j )^2 +
2 (\frac 1n \tr D_j \E ( Q_j - Q) )^2.
$$ 
Let ${\bs \alpha}(\omega) = n \max_{1\leq i\leq N} | t_i - \E[Q]_{ii}
|$.  Using Lemma
\ref{lm-approximations-EQ-C-B}--(\ref{superlemma-item:rank1}), we
obtain from the last inequality
\[
\| {\bs \varphi}(\omega) \|_\infty \leq 
\frac{\sigma_{\max}^2}{\omega} \bs{\alpha}(\omega) +
\frac{\sigma_{\max}^2}{\omega^2} + 
\frac{2n}{\omega} 
\E \left( y_j^* Q_j y_j - \frac 1n \tr D_j \E Q_j \right)^2
+ \frac{2 \sigma_{\max}^4}{n \omega^3} \ .
\] 
As in \eqref{eq-expression-psi9}, we have 
\[ 
\E \left( y_j^* Q_j y_j - \frac 1n \tr D_j \E Q_j \right)^2
= 
\frac{1}{n^2} 
\left( \E ( \tr {D}_j {Q}_j {D}_j {Q}_j ) +
\kappa \sum_{i=1}^N \sigma^4_{ij} \E [ {Q}_j ]_{ii}^2 
\right) 
\leq \frac{N \sigma_{\max}^4 ( 1 + | \kappa | )}{n^2 \omega^2}\ .
\] 
Therefore, 
\begin{equation}
\label{eq-varphi-small} 
\| {\bs \varphi}(\omega) \|_\infty \leq \frac{\sigma_{\max}^2}{\omega} 
\bs{\alpha}(\omega) + \frac{K'}{\omega^2} 
\end{equation} 
for $\omega \in [\rho, +\infty)$. A similar derivation yields
$\bs{\alpha}(\omega) \leq \frac{\sigma_{\max}^2}{\omega} 
\| {\bs \varphi}(\omega) \|_\infty + \frac{K'}{\omega^2}$. Plugging this
inequality into \eqref{eq-varphi-small}, we obtain 
$$
(1 - \sigma_{\max}^4 / \omega^2 ) 
\| {\bs \varphi}(\omega) \|_\infty \leq \frac{K'}{\omega^2},
$$ 
hence $\| {\bs \varphi}(\omega) \|_\infty \leq K' \omega^{-2}$ for $\omega$ large
enough. 

We have proved that $\| {\bs \varphi}(\omega) \|_\infty$ is bounded on
compact subsets of $[ \rho, \infty)$, and furthermore, that
\eqref{eq-dct-phi} is true for $\omega$ large enough. Therefore,
\eqref{eq-dct-phi} holds for every $\omega \in [ \rho, +\infty)$. Step
\ref{sec-prf-bias-dct} is proved, and so is Theorem \ref{th-bias}.

\begin{appendix}
\section{Proof of Lemma \ref{lm-approximations-EQ-C-B}} 
\label{anx-proof-lm-approximations-EQ-C-B}
\subsection*{Proof of Lemma 
\ref{lm-approximations-EQ-C-B}--(\ref{superlemma-item1})} Straightforward.

\subsection*{Proof of Lemma 
\ref{lm-approximations-EQ-C-B}-(\ref{superlemma-item2})}
\begin{proof}[Proof of (\ref{eq-approx:EQ-T})]  
From \cite[Lemmas 6.1 and 6.6]{HLN07}, we get
$$
\frac 1n \tr U \left(Q(-\rho)-T(-\rho)\right) 
\xrightarrow[n\rightarrow\infty]{}0 \quad \mathrm{a.s.} 
$$
Now since
$$
\left| \frac 1n \tr U\left( Q(-\rho) -T(-\rho)\right) \right| \le 
\|U\| \left( \|Q(-\rho)\| +\| T(-\rho)\|\right) 
\le \frac{2\|U\|}{\rho},
$$
the Dominated Convergence Theorem yields the first part of 
(\ref{eq-approx:EQ-T}). The second part is proved similarly.
\end{proof}

\begin{proof}[Proof of (\ref{eq-approx:B-T})] 
Recall from Theorem \ref{first-order}-(1) and from the mere definitions of $T$ and $B$ that matrices $T(z)$ and $B(z)$ 
can be written as
\[ 
T = 
\left( - z I + 
\frac{1}{n} \sum_{j=1}^n \frac{1}{1+\frac 1n \tr D_j T} D_j 
\right)^{-1} 
\quad \mathrm{and} \quad 
B = 
\left( - z I + 
\frac{1}{n} \sum_{j=1}^n \frac{1}{1+\frac 1n \tr D_j \E Q} D_j 
\right)^{-1}\ . 
\] 
We therefore have 
\begin{eqnarray*}
\frac 1n \tr U(B(-\rho) - T(-\rho)) &=& 
\frac 1n \tr UBT(T^{-1} - B^{-1}) \\
&=& 
\frac{1}{n^2} \tr\left( 
UBT \sum_{j=1}^n 
\frac{\frac 1n \tr D_j (\E Q - T)}
{(1+\frac 1n \tr D_j T)(1+ \frac 1n \tr D_j \E Q)}  
D_j \right) \\
&=& \frac{1}{n^2} \sum_{i=1}^N \sum_{j=1}^n x^n_{ij} \ ,
\end{eqnarray*} 
with 
$\displaystyle{
x_{ij}^n = \frac{ [ U ]_{ii} b_i t_i \sigma^2_{ij}}
{(1+\frac 1n \tr D_j T)(1+ \frac 1n \tr D_j \E Q)}  
\frac 1n \tr D_j (\E Q - T)
}$. 
It can be easily checked that 
$| x_{ij}^n | \leq 2 \sup_n(\| U \|) \sigma_{\max}^4 / \rho^3$. Furthermore,
$x_{ij}^n \to_n 0$ for every $i,j$ by (\ref{eq-approx:EQ-T}). 
It remains to apply the Dominated Convergence Theorem
to the integral with respect to Lebesgue measure on 
$[0,1]^2$ of the staircase function $f_n(x,y)$ defined as 
$f_n(i/N, j/n) = x^n_{ij}$ to deduce that $\frac 1n \tr U(B-T) \to 0$.  
This ends the proof of (\ref{eq-approx:B-T}). 
\end{proof}
In the sequel, $K$ is a constant whose value might change from line to line but which remains independent
of $n$.
\begin{proof}[Proof of (\ref{eq-approx:(Q-EQ)2})]  
We have 
\begin{eqnarray}
\tr U \left( Q - \E Q \right) &\stackrel{(a)}{=}& 
\sum_{j=1}^n \left( \E_j - \E_{j+1} \right) \tr U Q \nonumber \\
&\stackrel{(b)}{=}& 
\sum_{j=1}^n \left( \E_j - \E_{j+1} \right) \tr U \left( Q - Q_j \right)\nonumber \\
&\stackrel{(c)}{=}& 
- \sum_{j=1}^n \left( \E_j - \E_{j+1} \right) 
\frac{y_j^* Q_j U Q_j y_j}{1 + y_j^* Q_j y_j} \quad \stackrel{\triangle}{=}\quad \sum_{j=1}^n x_j. \label{decomposition-mart}
\end{eqnarray}
where $(a)$ follows from the fact that $\E_1 \tr\, UQ=\tr\, UQ$ and 
$\E_{n+1} \tr\, UQ = \E \tr\, UQ$, $(b)$ follows from the fact that 
$\E_j \tr\, UQ_j =\E_{j+1} \tr\, U Q_j$ since $Q_j$ does not depend on 
$y_j$ and $(c)$ follows from (\ref{inversion-lemma}) and from the fact that 
$\tr\, Q_j y_j y_j^* Q_j U = y_j^* Q_j U Q_j y_j$. \\ 
Now, one can easily check that $\sum_{j=1}^n x_j  \left( = \tr U(
  Q - \E Q)\right)$ is the sum of a martingale difference
sequence with respect to the increasing filtration ${\mathcal F}_n,
\ldots, {\mathcal F}_1$ since $\E_k x_j = 0$ for $k > j$. Therefore, 
$$
\E \left( \tr U
  \left( Q - \E Q \right) \right)^2 = 
  \sum_{j=1}^n \E  x_j^2.
$$ 
Write $x_j = x_{j,1} + x_{j,2}$ where:
\begin{eqnarray*}
x_{j,1} &=& - \left( \E_j - \E_{j+1} \right)
\left(\frac{y_j^* Q_j U Q_j y_j}{1 + \frac 1n \tr D_j Q_j} \right), \\
x_{j,2} &=& - \left( \E_j - \E_{j+1} \right)\left(\frac{y_j^* Q_j U Q_j y_j}{1 + y_j^* Q_j y_j}
-  \frac{y_j^* Q_j U Q_j y_j}{1 + \frac 1n \tr D_j Q_j} \right).
\end{eqnarray*}
Using the fact that $y_j$ and ${\mathcal F}_{j+1}$ are independent,
and the fact that $Q_j$ does not depend on $y_j$, one easily obtains:
$$
\E_{j+1} \left( \frac{y_j^* Q_j U Q_j y_j}{1+\frac 1n \tr D_j Q_j} \right)
= \frac 1n \tr D_j \E_{j+1} \left( \frac{Q_j U Q_j}{1+\frac 1n \tr D_j Q_j} \right).
$$
Thus $x_{j,1}$ and $x_{j,2}$ write:
\begin{eqnarray*}
x_{j,1} 
&=& 
- 
y_j^* \E_{j+1} \left( \frac{Q_j U Q_j}{1 + \frac 1n \tr D_j Q_j}  \right) y_j 
+
\frac{1}{n} \tr D_j
\E_{j+1} \left( \frac{Q_j U Q_j}{1 + \frac 1n \tr D_j Q_j}  \right) \\ 
x_{j,2} &=& 
\left( \E_j - \E_{j+1} \right)
\frac{y_j^* Q_j U Q_j y_j}{(1 + \frac 1n \tr D_j Q_j) 
(1 + y_j^* Q_j y_j)}  
 \left( y_j^* Q_j y_j - \frac 1n \tr D_j Q_j \right)  \\
 &\stackrel{\triangle}=& \left( \E_j - \E_{j+1} \right) x_{j,3} \ .
\end{eqnarray*}
Since matrix $\| D_j \E_j  \left( \frac{Q_j U Q_j}{1 + \frac 1n \tr D_j Q_j} 
\right) \| \leq K$, Lemma \ref{lm-approx-quadra}-(\ref{silverstein}) 
in conjunction with Assumption {\bf A-\ref{hypo-moments-X}} yield
$\E x_{1,j}^2 \le K n^{-1}$. 
Furthermore, we have:
$$
| x_{j,3} | \leq  
 \left| y_j^* Q_j U Q_j y_j 
 \left( y_j^* Q_j y_j - \frac 1n \tr D_j Q_j \right) \right| 
$$
since $y_j^* Q_j y_j \geq 0$ and $\frac 1n \tr D_j Q_j \geq 0$. 
Cauchy-Schwarz inequality yields:
$$
\E x_{j,3}^2 \le \left( \E ( y_j^* Q_j U Q_j y_j )^4 \right)^{\frac 12}
\left( \E \left( y_j^* Q_j y_j - \frac 1n \tr D_j Q_j \right)^4 \right)^{\frac 12} 
$$
which in turn yields $\E x_{j,3}^2 < \frac{K}n$ since 
\begin{equation}\label{controle1}
\E ( y_j^* Q_j U Q_j y_j )^4 \le K \qquad \textrm{and} \qquad 
\E \left( y_j^* Q_j y_j - \frac 1n \tr D_j Q_j \right)^4 \le \frac{K}{n^2}\ ,
\end{equation}
where the first inequality in (\ref{controle1}) 
follows from $0 \le y_j^* Q_j U Q_j y_j \le \|Q_j U Q_j\| \|y_j\|^2$ 
and from Assumption {\bf A-\ref{hypo-moments-X}}, 
and the second 
from Assumption {\bf A-\ref{hypo-moments-X}} and Lemma \ref{lm-approx-quadra}-(\ref{silverstein}).

We are now in position to conclude. 
\begin{eqnarray*}
\E x_{j,2}^2 &=& \E \left( (\E_j - \E_{j+1}) x_{j,3}\right)^2
\quad \le\quad  2 \E \left( (\E_j x_{j,3})^2  + (\E_{j+1} x_{j,3})^2 \right) 
\\ 
&\stackrel{(a)}{\le}&  2\E \left( \E_j x_{j,3}^2 + \E_{j+1} x_{j,3}^2 \right) 
\quad = \quad 
4 \E x_{j,3}^2,
\end{eqnarray*}
where $(a)$ follows from Jensen's inequality. Now,
\[
\E x_j^2 =\E (x_{j,1} + x_{j,2})^2  \le 
\left( (\E x_{j,1}^2)^{\frac 12} + (\E x_{j,2}^2)^{\frac 12}\right)^2 \le 
\frac Kn
\]
and 
$
\E (\tr U(Q-\E Q))^2 =\sum_{j=1}^n \E x_j^2 \le K.
$
Inequality (\ref{eq-approx:(Q-EQ)2}) is proved. 
\end{proof} 

\begin{proof}[Proof of (\ref{eq-approx:(Q-EQ)4})] 
We rely again on the decomposition (\ref{decomposition-mart}) and follow the 
lines of the computations in (\cite{BaiSil04}, page 580):
$$
\tr U \left( Q - \E Q \right)
=- \sum_{j=1}^n \left( \E_j - \E_{j+1} \right) 
\frac{y_j^* Q_j U Q_j y_j}{1 + y_j^* Q_j y_j}.
$$
Thus,
\begin{eqnarray*}
\lefteqn{ \E \left( \frac 1N \tr U(Q-\E Q)\right)^4
=\frac1{N^4} \E \left( \sum_{j=1}^n (\E_j -\E_{j+1})\frac{y_j^* Q_j U Q_j y_j}{1+y_j^* Q_j y_j} \right)^4}\\
&\stackrel{(a)}{\le}& \frac{K}{N^4} \E \left( 
\sum_{j=1}^n \left( (\E_j -\E_{j+1})\frac{y_j^* Q_j U Q_j y_j}{1+y_j^* Q_j y_j} \right)^2 
\right)^2
\\
&\stackrel{(b)}{\le}&  \frac{K}{N^4} N \sum_{j=1}^n \E 
\left(  (\E_j -\E_{j+1})\frac{y_j^* Q_j U Q_j y_j}{1+y_j^* Q_j y_j} \right)^4  
\\
&\le& \frac{K}{N^2} \sup_{j} 
\E\left( (\E_j -\E_{j+1})\frac{y_j^* Q_j U Q_j y_j}{1+y_j^* Q_j y_j} \right)^4 \ ,
\end{eqnarray*}
where $(a)$ follows from Burkholder's inequality and $(b)$ from the
convexity inequality $(\sum_{i=1}^n a_i)^2 \le n\sum_{i=1}^n a_i^2$.
Recall now that 
$y_j ^* Q_j y_j \ge 0$ and $\|Q_j(-\rho)\|\le 1/\rho$.
Standard computations yield: 
$$
\E\left( (\E_j -\E_{j+1})\frac{y_j^* Q_j U Q_j y_j}{1+y_j^* Q_j y_j} \right)^4
\le K\E \left( y_j^* Q_j U Q_j y_j\right)^4 
\le \frac{K \|U\|^4}{\rho^8}\ \E \|y_j\|^4
$$
which is uniformly bounded by Assumptions 
{\bf A-\ref{hypo-moments-X}} and {\bf A-\ref{hypo-variance-field}}. 
Therefore, (\ref{eq-approx:(Q-EQ)4}) is proved.
\end{proof} 

\subsection*{Proof of Lemma 
\ref{lm-approximations-EQ-C-B}--(\ref{superlemma-item:rank1})}  

Developing the difference $Q-Q_j$ with the help of  
(\ref{inversion-lemma}), we obtain:
\begin{eqnarray*}
|\tr M(Q-Q_j) | &=& \left|  \tr M 
\left( \frac{Q_j y_j y_j^* Q_j}{1+y_j^* Q_j y_j} \right) \right|\\
&=& \frac{\left| y_j^* Q_j M Q_j y_j \right|}{1 + y_j^* Q_j y_j} 
\leq \| M \| 
\frac{\| Q_j y_j \|^2}{1 + y_j^* Q_j y_j} \ .
\end{eqnarray*}
Consider a spectral representation of $Y^j {Y^j}^*$, \emph{i.e.},
$Y^j {Y^j}^* = \sum_{i=1}^N \lambda_i e_i e_i^*$. We have 
$$
\| Q_j y_j \|^2 = 
\sum_{i=1}^N \frac{\left| e_i^* y_j \right|^2}
{\left(\lambda_i + \rho\right)^2} 
\ \ \mathrm{and} \ \ 
y_j^* Q_j y_j = \sum_{i=1}^N 
\frac{| e_i^* y_j |^2}{\lambda_i + \rho} 
\geq \rho \sum_{i=1}^N 
\frac{| e_i^* y_j |^2}{\left(\lambda_i + \rho\right)^2} \ ,
$$
hence the result. Inequality (\ref{superlemma-item:rank1}) is proved. 

\section{Proof of Formula \eqref{eq-tr(T-Q)=tr(T-Q)tilde}}  
\label{anx-prf-Q-Qtilde} 
Recalling that $Q(z) = ( Y Y^* -z I_N )^{-1}$ and 
$\ti Q(z) = ( Y^* Y -z I_n )^{-1}$, it is easy to show that 
$\tr(Q) - \tr(\ti Q) = (n-N) / z$. We shall show now that 
$\tr(T) - \tr(\ti T) = (n-N) / z$. Formula \eqref{eq-tr(T-Q)=tr(T-Q)tilde}
is obtained by combining these two equations. \\
Equations \eqref{eq-approx-determinist-complete} in the statement of Lemma
\ref{th-deter-approx-details} can be rewritten as
\[
t_i +  \frac {t_i}n \sum_{j=1}^n \sigma^2_{ij} \ti t_j = -\frac 1z
\ \mathrm{for} \ 1\le i \le N, \quad \quad 
\ti t_j +  \frac{\ti t_j}n \sum_{i=1}^N \sigma^2_{ij} t_i = -\frac 1z
\ \mathrm{for} \ 1 \le j \le n. 
\]
By summing the first $N$ equations over $i$ and the next $n$ equations over
$j$ and by eliminating the term 
$\frac 1n \sum_{i=1}^N \sum_{j=1}^n \sigma^2_{ij} t_i \ti t_j$, we 
obtain $\sum_i t_i - \sum_j \ti t_j = (n-N) / z$, which is the desired result.
Equation \eqref{eq-tr(T-Q)=tr(T-Q)tilde} is proved.

\end{appendix}

\bibliography{math}

\noindent {\sc Walid Hachem} and {\sc Jamal Najim},\\ 
CNRS, T\'el\'ecom Paris\\ 
46, rue Barrault, 75013 Paris, France.\\
e-mail: \{hachem, najim\}@enst.fr\\
\\
\noindent {\sc Philippe Loubaton},\\
IGM LabInfo, UMR 8049, Institut Gaspard Monge,\\
Universit\'e de Marne La Vall\'ee, France.\\
5, Bd Descartes, Champs sur Marne, \\
77454 Marne La Vall\'ee Cedex 2, France.\\
e-mail: loubaton@univ-mlv.fr\\
\\
\noindent

\end{document}